\newtheorem{theorem}{Theorem}[section]
\newtheorem{algo}[theorem]{Algorithm}
\newcommand{\eps}{\varepsilon}
\newcommand{\p}{\partial}
\newcommand{\Lapl}{\Delta}
\newcommand{\hf}{\frac{1}{2}}
\renewcommand{\P}{\mathcal{P}}
\newcommand{\J}{\mathcal{J}}
\newcommand{\M}{\mathcal{M}}
\renewcommand{\H}{\mathcal{H}}
\newcommand{\ms}{\mathsf}
\newcommand{\wt}[1]{\widetilde{#1}}
\begin{document}

\title{Sweeping Preconditioner for the Helmholtz Equation:\\
  Hierarchical Matrix Representation}
\author{Bj\"orn Engquist and Lexing Ying\\
  Department of Mathematics and ICES, University of Texas, Austin, TX
  78712}

\date{July 2010}
\maketitle

\begin{abstract}
  The paper introduces the sweeping preconditioner, which is highly
  efficient for iterative solutions of the variable coefficient
  Helmholtz equation including very high frequency problems. The first
  central idea of this novel approach is to construct an approximate
  factorization of the discretized Helmholtz equation by sweeping the
  domain layer by layer, starting from an absorbing layer or boundary
  condition. Given this specific order of factorization, the second
  central idea of this approach is to represent the intermediate
  matrices in the hierarchical matrix framework. In two dimensions,
  both the construction and the application of the preconditioners are
  of linear complexity. The GMRES solver with the resulting
  preconditioner converges in an amazingly small number of iterations,
  which is essentially independent of the number of unknowns. This
  approach is also extended to the three dimensional case with some
  success. Numerical results are provided in both two and three
  dimensions to demonstrate the efficiency of this new approach.
\end{abstract}

{\bf Keywords.} Helmholtz equation, perfectly matched layer, absorbing
boundary condition, high frequency waves, preconditioner, $LDL^t$
factorization, Green's function, matrix compression, hierarchical
matrices.

{\bf AMS subject classifications.}  65F08, 65N22, 65N80.

\section{Introduction}
\label{sec:intro}

This is the first of a series of papers on developing efficient
preconditioners for the numerical solutions of the Helmholtz equation
in two and three dimensions. The efficiency of preconditioners for the
Helmholtz equation in the important high frequency range are at
present much lower than that of preconditioners of typical elliptic
problems. This paper develops efficient preconditioners of the
Helmholtz equation by exploiting the physical property of the wave
phenomena and certain low rank interaction properties of the Green's
function.

Let the domain of interest be the unit box $D = (0,1)^d$ with
$d=2,3$. The time-independent wave field $u(x)$ for $x\in D$ satisfies
\begin{equation}
\Lapl u(x) + \frac{\omega^2}{c^2(x)} u(x) = f(x),
\label{eq:helm}
\end{equation}
where $\omega$ is the angular frequency and $c(x)$ is the velocity
field and $f(x)$ is the external force. Commonly used boundary
conditions are approximations of the Sommerfeld condition which
guarantees that the wave field generated by $f(x)$ propagates out of
the domain. Other boundary condition for part of the boundary will
also be considered.  By appropriately rescaling the system, it is
convenient to assume that the mean of $c(x)$ is around $1$. Then
$\frac{\omega}{2\pi}$ is the (average) wave number of this problem and
$\lambda = \frac{2\pi}{\omega}$ is the (typical) wavelength.

The Helmholtz equation is ubiquitous since it is the root of almost
all linear wave phenomena. Applications of the Helmholtz equation are
abundant in acoustics, elasticity, electromagnetics, quantum
mechanics, and geophysics. As a result, efficient and accurate
numerical solution of the Helmholtz problem is one of the urgent
problems in computational mathematics. This is, however, a very
difficult problem due to two main reasons. Firstly, in a typical
engineering application, the Helmholtz equation is discretized with at
least $8$ to $16$ points per wavelength. Therefore, the number of
samples $n$ in each dimension is proportional to $\omega$, the total
number of samples $N$ is $n^d = O(\omega^d)$, and the discrete system
of the Helmholtz equation is of size $O(\omega^d) \times
O(\omega^d)$. In the high frequency range when $\omega$ is large, this
is an enormous system. Secondly, as the discrete system is highly
indefinite and has a very oscillatory Green's function due to the wave
nature of the Helmholtz equation, most of the modern multiscale
techniques developed for elliptic or parabolic problems are no longer
effective.

\subsection{Approach and contribution}

In this paper, we propose a {\em sweeping preconditioner} for the
iterative solution of the Helmholtz equation. In all examples, the
Helmholtz equation is discretized by centered finite differences,
i.e., the 5-point stencil in 2D and the 7-point stencil in 3D.

In the 2D case, this new preconditioner is based on a block $LDL^t$
factorization of the discrete Helmholtz operator. The overall process
is to eliminate the the unknowns layer by layer, starting from an
layer with Sommerfeld condition specified. The main observation is
that each intermediate $n\times n$ Schur complement matrix of this
block $LDL^t$ factorization roughly corresponds to the restriction of
a half-space Green's function to a line and these Schur complement
matrices are highly compressible with low-rank off-diagonal
blocks. Representing and manipulating these matrices in the
hierarchical matrix framework \cite{BormGrasedyckHackbusch:06}
requires only $O(n\log n)$ space and $O(n\log^2 n)$ steps. As a
result, the block $LDL^t$ factorization takes $O(n^2\log^2 n) = O(N
\log^2 N)$ steps. The resulting block $LDL^t$ factorization serves as
an excellent preconditioner for the discrete Helmholtz system and
applying it to any vector takes only $O(n^2 \log n) = O(N \log N)$
steps using the hierarchical matrix framework. By combining this
preconditioner with GMRES, we obtain iteration numbers that are almost
independent of $\omega$. In a typical example with a computational
domain of $256\times 256$ wavelengths and four million unknowns, only
3 to 4 GMRES iterations are required (see Section \ref{sec:2Dnum}).

We also extend this approach to the 3D case and construct an
approximate block $LDL^t$ factorization by eliminating the unknowns
face by face, starting from a face with Sommerfeld condition
specified. Though each intermediate $n^2 \times n^2$ Schur complement
matrix still corresponds to the restriction of a half-space Green's
function to a face, the off-diagonal parts may not be of numerically
low-rank. However, since the goal is to construct a preconditioner, we
still represent and manipulate these matrices under the hierarchical
matrix framework. Numerical results show that applying the resulting
preconditioner is highly efficient and the preconditioned GMRES solver
converges in a small number of iterations, weakly depending on
$\omega$.

The main observation of the sweeping preconditioner comes from the
analytic low-rank property of the Green's function of the {\em
  continuous} Helmholtz operator. On the other hand, the algorithms
construct the approximation to the Green's function of the {\em
  discrete} Helmholtz operator. It is important that this Green's
function is calculated from the discretized problem to be solved
numerically and is not an independent approximation of the continuous
analogue.

\subsection{Related work}

There has been a vast literature on developing efficient algorithms
for the Helmholtz equation. A wide class of methods for special sets
of solutions are based on asymptotic expansion of the solution
$u(x)$. These techniques of geometric optics type are efficient when
$\omega$ is very large. A review article on these methods can be found
in \cite{EngquistRunborg:03}. There is also a class of methods based
on boundary integral or volumetric integral representations. These
integral equation methods can be highly efficient for piecewise
constant velocity fields when combined with fast summation methods
such as the fast multipole methods and the fast Fourier transforms
\cite{BleszynskiBleszynskiJaroszewicz:96,BrunoMcKay:05,
  EngquistYing:07,EngquistYing:09,Rokhlin:90,Rokhlin:93}. Here we will
focus on the methods that discretize the Helmholtz equation directly.

The most efficient direct methods for solving the discretized
Helmholtz systems are the multifrontal methods or their pivoted
versions \cite{DuffReid:83,George:73,Liu:92}. The multifrontal methods
exploit the locality of the discrete operator and construct an $LDL^t$
factorization based on a hierarchical partitioning of the
domain. Their computational costs depend quite strongly on the
dimensionality. In 2D, for a problem with $N = n\times n$ unknowns, a
multifrontal method takes $O(N^{3/2})$ steps and $O(N \log N)$ storage
space. The prefactor is usually rather small, making the multifrontal
methods effectively the default choice for the 2D Helmholtz
problem. In 3D, for a problem with $N=n\times n \times n$ unknowns, a
multifrontal method takes $O(n^6)= O(N^2)$ steps and $O(n^4) =
O(N^{4/3})$ storage space. For large scale 3D problems, they can be
very costly.

In the setting of the elliptic operators, the intermediate matrices of
the multifrontal methods can be well approximated using hierarchical
matrix algebra and this allows one to bring the cost down to linear
complexity in both 2D and 3D
\cite{Martinsson:09,XiaChandrasekaranGuLi:09}. This is, however, not
true for the Helmholtz operator. As we pointed out, the sweeping
preconditioner introduced in this paper is also based on constructing
an $LDL^t$ factorization of the Helmholtz operator. However, due to
its specific sweeping (or elimination) order, which is very different
from the one of the multifrontal methods, we are able to represent the
intermediate matrices in a more effective way and obtain a highly
efficient preconditioner.

There has been a surge of developments in the category of iterative
methods for solving the Helmholtz equation. The following discussion
is by no means complete and more details can be found in
\cite{Erlangga:08}.

Standard multigrid methods do not work well for the Helmholtz equation
for several reasons. The most important one is that the oscillations
on the scale of the wavelength cannot be carried on the coarse
grids. Several methods have been proposed to remedy this
\cite{BrandtLivshits:97,ElmanErnstOLeary:01,FishQu:00,
  LeeManteuffelMcCormickRuge:00,LivshitsBrandt:06,VanekMandelBrezina:98}. For
example in \cite{BrandtLivshits:97,LivshitsBrandt:06}, Brandt and
Livshits proposed the wave-ray method. This method uses the standard
smoothers to remove the coarse and fine components of the residue. It
also decomposes the component that oscillates on the scale of the
wavelength into rays pointing at different directions. Each ray is
further represented with a phase and amplitude representation, and the
amplitude is relaxed with an anisotropic grid aligned with the ray
direction. A limitation of the wave-ray method is however that the
method is essentially restricted only to the case of constant velocity
field. We would like to point out that there is a connection between
the wave ray method and the sweeping preconditioner proposed in this
paper, as both methods exploit the analytic behavior of the Green's
function of the Helmholtz equation. The wave ray method relies on the
Green's function over the whole domain, while the sweeping
preconditioner uses its restriction on a single layer.

Several other methods
\cite{BenamouDespres:97,Despres:91,SusanResigaAtassi:98} leverage the
idea of domain decomposition. These methods are typically quite
suitable for parallel implementation, as the computation in each
subdomain can essentially be done independently. However, convergence
rates of the these methods are usually quite slow \cite{Erlangga:08}.

Another class of methods
\cite{BaylissGoldsteinTurkel:83,ErlanggaOosterleeVuik:06,ErlanggaVuikOosterlee:04,LairdGiles:02}
that attracts a lot of attention recently preconditions the Helmholtz
operator with a shifted Laplacian operator,
\[
\Lapl - \frac{\omega^2}{c^2(x)} ( \alpha + i \beta), \quad \alpha>0,
\]
to improve the spectrum property of the discrete Helmholtz
system. Since the shifted Laplacian operator is elliptic, standard
algorithms such as multigrid can be used for its inversion. These
methods offer quite significant improvements for the convergence rate,
but the reported number of iterations typically still grow linearly
with respect to $\omega$ and are much larger than the iteration
numbers produced by the sweeping preconditioner.

Several other constructions of preconditioners
\cite{BenziHawsTuma:00,GanderNataf:05,OseiKuffuorSaad:09} are based on
incomplete LU (ILU) decomposition, i.e., generating only a small
portion of the entries of the LU factorization of the discrete
Helmholtz operator and applying this ILU decomposition as a
preconditioner. Recent approaches based on ILUT (incomplete LU
factorization with thresholding) and ARMS (algebraic recursive
multilevel solver) have been reported in
\cite{OseiKuffuorSaad:09}. These ILU preconditioners bring down the
number of iterations quite significantly, however the number of
iterations still scale typically linearly in $\omega$. In connection
with the ILU preconditioners, the sweeping preconditioner can be
viewed as an approximate LU (ALU) preconditioner: instead of keeping
only a few selected entries, it approximates the whole inverse
operator more accurately in a more sophisticated and effective form,
thus resulting in substantially better convergence properties.

\subsection{Contents}

The rest of this paper is organized as follows. Section
\ref{sec:2Dpre} presents the sweeping preconditioner in the 2D case
and Section \ref{sec:2Dnum} reports the 2D numerical results. We
extend this approach to the 3D case in Section \ref{sec:3Dpre} and
report the 3D numerical results in Section \ref{sec:3Dnum}. Finally,
Section \ref{sec:conc} discusses some future directions of this work.

\section{Preconditioner in 2D}
\label{sec:2Dpre}

\subsection{Discretization}

Recall that the computational domain is $D = (0,1)^2$. Let us assume
for simplicity that the Sommerfeld condition is specified over the
whole boundary. One standard way of incorporating the Sommerfeld
boundary condition into \eqref{eq:helm} is to use the perfectly
matched layer (PML)
\cite{Berenger:94,ChewWeedon:94,Johnson:10}. Introduce
\begin{equation}
\sigma(t) = 
\begin{cases}
  \frac{C}{\eta}\cdot \left( \frac{t-\eta}{\eta} \right)^2 & t \in [0,\eta]\\
  0 & t\in [\eta, 1-\eta] \\
  \frac{C}{\eta}\cdot \left( \frac{t-1+\eta}{\eta} \right)^2 & t \in [1-\eta,1],
\end{cases}
\label{eq:sigma}
\end{equation}
and 
\[
s_1(x_1) = \left( 1+i\frac{\sigma(x_1)}{\omega} \right)^{-1},\quad
s_2(x_2) = \left( 1+i\frac{\sigma(x_2)}{\omega} \right)^{-1}.
\]
Here $\eta$ is typically about one wavelength and $C$ is an
appropriate positive constant independent of $\omega$. The PML
approach replaces $\p_1$ with $s_1(x_1) \p_1$ and $\p_2$ with
$s_2(x_2)\p_2$, which effectively provides a damping layer of width
$\eta$ near the boundary of the domain $[0,1]^2$. The resulting
equation is
\begin{eqnarray*}
\left( (s_1\p_1)(s_1\p_1) + (s_2\p_2)(s_2\p_2) + \frac{\omega^2}{c^2(x)} \right) u = f && x\in D=[0,1]^2,\\
u = 0 && x \in \p D.
\end{eqnarray*}
Without loss of generality, we assume that $f(x)$ is supported inside
$[\eta,1-\eta]^2$ (away from the PML). Dividing the above
equation by $s_1(x_1) s_2(x_2)$ results
\[
\left( \p_1\left(\frac{s_1}{s_2} \p_1\right) + \p_2\left(\frac{s_2}{s_1} \p_2\right) + 
  \frac{\omega^2}{s_1s_2 \cdot c^2(x)} \right) u = f.
\]
The advantage of working with this equation is that it is symmetric,
which offers some convenience from the algorithmic point of view.  We
discretize the domain with a Cartesian grid with spacing $h =
1/(n+1)$. In order to discretize each wavelength with a couple of
points, the number of points $n$ in each dimension needs to be
proportional to $\omega$. We assume that $n$ to be an integer power of
two for simplicity. The interior points of this grid are
\[
\P = \{ p_{i,j} = (ih,jh): 1\le i,j \le  n\}
\]
(see Figure \ref{fig:2Dgrid} (left)) and the total number of points
$N$ is equal to $n^2$.

\begin{figure}[h!]
  \begin{center}
    \includegraphics{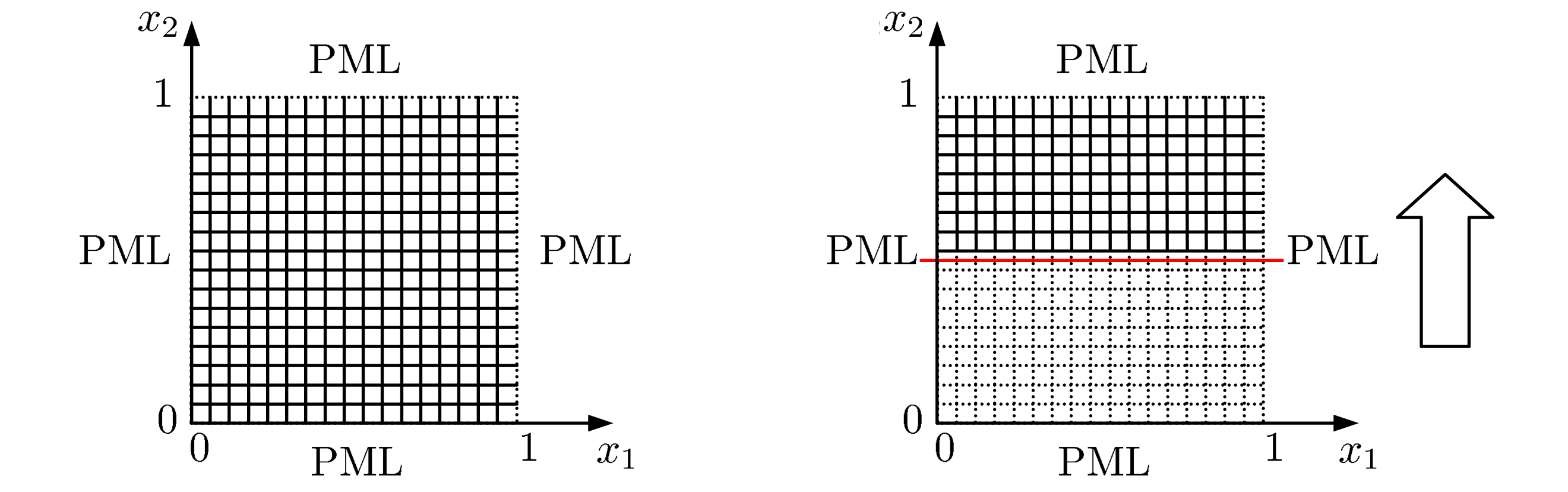}
  \end{center}
  \caption{Left: Discretization grid in 2D. Right: Sweeping order in
    2D. The dotted grid indicates the unknowns that have already been
    eliminated.}
  \label{fig:2Dgrid}
\end{figure}

We denote by $u_{i,j}$, $f_{i,j}$, and $c_{i,j}$ the values of $u(x)$,
$f(x)$, and $c(x)$ at point $p_{i,j}=(ih,jh)$. The standard 5-point
stencil finite difference method writes down the equation at points in
$\P$ using central difference. The resulting equation at $p_{i,j} =
(ih,jh)$ is
\begin{multline}
  \frac{1}{h^2} \left(\frac{s_1}{s_2}\right)_{i-\hf,j} u_{i-1,j} + 
  \frac{1}{h^2} \left(\frac{s_1}{s_2}\right)_{i+\hf,j} u_{i+1,j} + 
  \frac{1}{h^2} \left(\frac{s_2}{s_1}\right)_{i,j-\hf} u_{i,j-1} + 
  \frac{1}{h^2} \left(\frac{s_2}{s_1}\right)_{i,j+\hf} u_{i,j+1} \\
  + \left(
    \frac{\omega^2}{(s_1s_2)_{i,j}\cdot c_{i,j}^2}
    - \left(\cdots
    \right) 
  \right)
  u_{i,j} = f_{i,j}
  \label{eq:helmd}
\end{multline} 
with $u_{i',j'}$ equal to zero for $(i',j')$ that violates $1\le
i',j'\le n$. Here $(\cdots)$ stands for the sum of the four
coefficients appeared in the first line. We order $u_{i,j}$ row by row
starting from the first row $j=1$ and denote the vector containing all
unknowns by
\[
u = \left(u_{1,1}, u_{2,1},\ldots,u_{n,1}, \ldots,u_{1,n}, u_{2,n},\ldots,u_{n,n}\right)^t.
\]
Similarly, $f_{i,j}$ are ordered in the same way and the vector $f$ is
\[
f = \left(f_{1,1}, f_{2,1},\ldots,f_{n,1}, \ldots,f_{1,n}, f_{2,n},\ldots,f_{n,n}\right)^t.
\]
Then \eqref{eq:helmd} takes the form $A u = f$. We further define
$\P_m$ to be the unknowns in the $m$-th row
\[
\P_m = \{p_{1,m},\ldots,p_{n,m}\} 
\]
and introduce
\[
u_m = \left( u_{1,m}, u_{2,m},\ldots,u_{n,m} \right)^t\quad\text{and}\quad
f_m = \left( f_{1,m}, f_{2,m},\ldots,f_{n,m} \right)^t.
\]
Then
\[
u = (u_1^t, u_2^t, \ldots, u_n^t)^t\quad\text{and}\quad
f = (f_1^t, f_2^t, \ldots, f_n^t)^t.
\]
Using these notations, the system $Au=f$ takes the following
tridiagonal block form
\[
\begin{pmatrix}
  A_{1,1} & A_{1,2} & & \\
  A_{2,1} & A_{2,2} & \ddots & \\
  & \ddots & \ddots & A_{n-1,n}\\
  & & A_{n,n-1} & A_{n,n}
\end{pmatrix}
\begin{pmatrix}
  u_1\\
  u_2\\
  \vdots\\
  u_n
\end{pmatrix}
=
\begin{pmatrix}
  f_1\\
  f_2\\
  \vdots\\
  f_n
\end{pmatrix}
\]
where $A_{m,m}$ are tridiagonal matrices and $A_{m,m-1} = A_{m-1,m}^t$
are diagonal matrices.

We introduce the notion of the {\em sweeping factorization}, which is
essentially a block $LDL^t$ factorization of $A$ that eliminates the
unknowns layer by layer. Starting from the first row of unknowns
$\P_1$ gives
\[
A = L_1
\begin{pmatrix}
  S_1 &  & & \\
  & S_2 & A_{2,3} & \\
  & A_{3,2} & \ddots & \ddots \\
  & & \ddots & \ddots
\end{pmatrix}
L_1^t
\]
where $S_1=A_{1,1}$, $S_2 = A_{2,2} - A_{2,1} S_1^{-1} A_{1,2}$, and
the matrix $L_1$ is a block lower-triangular matrix given by
\[
L_1(\P_2,\P_1) = A_{2,1} S_1^{-1},\quad
L_1(\P_i,\P_i) = I\;\;(1\le i \le n),\quad
\text{and zero otherwise}.
\]
Repeating this process over all $\P_m$ for $m=2,\ldots,n-1$ gives
\begin{equation}
A = L_1 \cdots L_{n-1}
\begin{pmatrix}
  S_1 & & & \\
  & S_2 & & \\
  & & \ddots & \\
  & & & S_n\\
\end{pmatrix}
L_{n-1}^t \cdots L_1^t,
\label{eq:Afact}
\end{equation}
where $S_m = A_{m,m} - A_{m,m-1} S_{m-1}^{-1} A_{m-1,m}^t$ for $m=2,3,\ldots,
n$. The matrix $L_m$ is given by
\[
L_m(\P_{m+1},\P_m) = A_{m+1,m} S_m^{-1},\quad
L_m(\P_i,\P_i) = I\;\;(1\le i \le n),\quad
\text{and zero otherwise}.
\]
This process is illustrated graphically in Figure \ref{fig:2Dgrid}
(right). Inverting this factorization \eqref{eq:Afact} for $A$ gives
the following formula for $u$:
\[
u = (L_1^t)^{-1}\cdots (L_{n-1}^t)^{-1} 
\begin{pmatrix}
  S_1^{-1} & & & \\
  & S_2^{-1} & & \\
  & & \ddots & \\
  & & & S_n^{-1} \\
\end{pmatrix}
L_{n-1}^{-1} \cdots L_1^{-1} f.
\]

Algorithmically, the construction of the sweeping factorization of $A$
can be summarized as follows by introducing $T_m = S_m^{-1}$.
\begin{algo}
  Construction of the sweeping factorization of $H$.
  \label{alg:setupext}
\end{algo}
\begin{algorithmic}[1]
  \STATE $S_1 = A_{1,1}$ and $T_1 = S_1^{-1}$.
  \FOR{$m=2,\ldots,n$}
  \STATE $S_m = A_{m,m} - A_{m,m-1} T_{m-1} A_{m-1,m}$ and $T_m = S_m^{-1}$.
  \ENDFOR
\end{algorithmic}
Since $S_m$ and $T_m$ are in general dense matrices of size $n\times
n$, the cost of the construction algorithm is of order $O(n^4) =
O(N^2)$. The computation of $u = A^{-1}f$ is carried out in the
following algorithm once the sweeping factorization is ready.
\begin{algo}
  Computation of $u=A^{-1}f$ using the sweeping factorization of $A$.
  \label{alg:solveext}
\end{algo}
\begin{algorithmic}[1]
  \FOR{$m=1,\ldots,n$}
  \STATE $u_m = f_m$
  \ENDFOR
  \FOR{$m=1,\ldots,n-1$}
  \STATE $u_{m+1} = u_{m+1} - A_{m+1,m} (T_m u_m)$
  \ENDFOR
  \FOR{$m=1,\ldots,n$}
  \STATE $u_m = T_m u_m$
  \ENDFOR
  \FOR{$m=n-1,\ldots,1$}
  \STATE $u_m = u_m - T_m (A_{m,m+1} u_{m+1})$
  \ENDFOR
\end{algorithmic}
Obviously the computations of $T_m u_m$ in the second and the third
loops only need to be carried out once. However, we prefer to write
the algorithm this way for simplicity. The cost of computing $u$ is of
order $O(n^3) = O(N^{3/2})$. This is $O(N^{1/2})$ times more expensive
compared to the multifrontal methods, therefore Algorithms
\ref{alg:setupext} and \ref{alg:solveext} themselves are not very
useful.

\subsection{Main observation}

Let us consider the meaning of the matrix $T_m = S_m^{-1}$. Consider
only the top-left $m\times m$ blocks of the factorization
\eqref{eq:Afact}.
\begin{equation}
\begin{pmatrix}
  A_{1,1} & A_{1,2} & & \\
  A_{2,1} & A_{2,2} & \ddots & \\
  & \ddots & \ddots & A_{m-1,m}\\
  & & A_{m-1,m} & A_{m,m}
\end{pmatrix}
=
L_1 \cdots L_{m-1}
\begin{pmatrix}
  S_1 & & & \\
  & S_2 & & \\
  & & \ddots & \\
  & & & S_m\\
\end{pmatrix}
L_{m-1}^t \cdots L_1^t,
\label{eq:Afactm}
\end{equation}
where the $L_k$ matrices are redefined to their restrictions to the
top-left $m \times m$ blocks. The matrix on the left is in fact the
discrete Helmholtz operator of the half space problem below
$x_2=(m+1)h$ and with zero boundary condition on
$x_2=(m+1)h$. Inverting the above factorization gives
\begin{equation}
\begin{pmatrix}
  A_{1,1} & A_{1,2} & & \\
  A_{2,1} & A_{2,2} & \ddots & \\
  & \ddots & \ddots & A_{m-1,m}\\
  & & A_{m,m-1} & A_{m,m}
\end{pmatrix}^{-1}
=
(L_1^t)^{-1} \cdots (L_{m-1}^t)^{-1}
\begin{pmatrix}
  S_1^{-1} & & & \\
  & S_2^{-1} & & \\
  & & \ddots & \\
  & & & S_m^{-1}\\
\end{pmatrix}
L_{m-1}^{-1} \cdots L_1^{-1}.
\label{eq:Afacminv}
\end{equation}
The matrix on the left is the discrete half-space Green's function of
the Helmholtz operator with zero boundary condition. On the right
side, due to the definition of the matrices $L_1,\ldots,L_{m-1}$, the
$(m,m)$-th block of the whole product is exactly equal to
$S_m^{-1}$. Therefore,
\begin{center}
  $T_m = S_m^{-1}$ is the discrete half-space Green function of the
  Helmholtz operator \\
  with zero boundary at $x_2=(m+1)h$, restricted to the points on
  $x_2=mh$.
\end{center}
The main observation of our approach is that 
\begin{center}
  $T_m$ and $S_m$ are highly compressible with numerically low-rank
  off-diagonal blocks.
\end{center}
The following theorem shows that this is true for the continuous
half-space Green's function for the case of constant velocity field
$c(x)=1$.

\begin{theorem} \label{thm:lowrank}
  Let 
  \[
  Y = \left\{ p_{i,m} = (ih,mh), i=1,\ldots,\frac{n}{2} \right\} \quad\text{and}\quad
  X = \left\{ p_{i,m} = (ih,mh), i=\frac{n}{2}+1,\ldots,n \right\},
  \]
  and $G$ be the (continuous) half-space Green's function of the
  Helmholtz operator for the domain $(-\infty,\infty) \times
  (-\infty,(m+1)h)$ with zero boundary condition. Then $(G(x,y))_{x
    \in X, y\in Y}$ is numerically low-rank. More precisely, for any
  $\eps>0$, there exist a constant $R = O(\log\omega |\log \eps|^2)$,
  functions $\{\alpha_r(x)\}_{1\le r \le R}$ for $x\in X$ and
  functions $\{\beta_r(y)\}_{1 \le r \le R}$ for $y\in Y$ such that
  \[
  \left| G(x,y) - \sum_{r=1}^R \alpha_r(x) \beta_r(y) \right| \le \eps \quad\text{for}\quad x\in X, y\in Y.
  \]
\end{theorem}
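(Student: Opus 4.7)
The natural starting point is the method of images. Writing the half-space Green's function as
\[
G(x,y) = \frac{i}{4}\Bigl[H_0^{(1)}(\omega|x-y|) - H_0^{(1)}(\omega|x-y^*|)\Bigr],
\]
where $y^* = (y_1,\,2(m+1)h - y_2)$ is the reflection of $y$ across the Dirichlet line $x_2 = (m+1)h$, reduces the claim to proving numerically low-rank structure for each Hankel-function term restricted to $X \times Y$. The two terms are structurally similar (both involve the free-space 2D kernel $H_0^{(1)}$ evaluated at source–target distances that stay bounded away from zero, since $X$ and $Y$ are disjoint), so I would treat them in parallel.

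The next step is a hierarchical (H-matrix style) decomposition of the single interval pair $X,Y$. Both $X$ and $Y$ are intervals of length $1/2$ sharing one endpoint, so I would build a dyadic tree on each, yielding at level $\ell$ a set of subintervals of length $2^{-(\ell+1)}$. Standard admissibility (diameter $\le$ distance) then classifies sub-pairs $X_i \times Y_j$ into two families: a bounded number of admissible pairs that appear fresh at each level near the shared endpoint, and one non-admissible pair that is pushed to the next level. The recursion terminates after $O(\log(1/h)) = O(\log\omega)$ levels, at which point the remaining non-admissible pair is trivially rank $O(1)$ (a handful of grid points).

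For each admissible pair at a given level, the geometric rank bound I would use has two ingredients. First, assuming WLOG that $X$ lies to the right of $Y$, the plane-wave factor splits tensor-wise,
\[
e^{i\omega|x-y|} = e^{i\omega(x_1-y_1)} = e^{i\omega x_1}\,e^{-i\omega y_1},
\]
so the oscillatory part is rank $1$. Second, the residual envelope $\phi(z) := e^{-iz}H_0^{(1)}(z)$ is analytic on $\{\mathrm{Re}\,z>0\}$ with algebraic $z^{-1/2}$ decay, and its restriction to the admissible range of $z = \omega|x-y|$ lies in a region of uniform relative analyticity; Chebyshev expansion in $r = x_1-y_1$ gives exponentially convergent polynomial approximation of degree $O(|\log\eps|)$, which on tensoring into the $(x_1,y_1)$ variables yields rank $O(|\log\eps|^2)$. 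The small-argument regime $\omega r \lesssim 1$ (relevant to the finest levels) requires a separate but easier estimate from the series $H_0^{(1)}(z) = \tfrac{2i}{\pi}(\log(z/2)+\gamma)J_0(z) + \text{entire}$, where the isolated logarithm pulls out as a rank-$1$ term and the remainder is analytic.

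Summing the separable approximations over all admissible sub-pairs then gives a global expansion with total rank $\sum_\ell O(1)\cdot O(|\log\eps|^2) = O(\log\omega\,|\log\eps|^2)$, which is exactly the claimed bound; the same machinery applied to the image kernel $H_0^{(1)}(\omega|x-y^*|)$ doubles the rank but leaves the asymptotic unchanged. \textbf{The main obstacle} is establishing a frequency-independent rank bound per admissible pair: a naïve direct Chebyshev expansion of $H_0^{(1)}(\omega r)$ as a function of $r$ requires degree $\omega\cdot\mathrm{diam}$ because of the oscillation, which would degrade the bound to $\mathrm{poly}(\omega)$. Exploiting the one-dimensional collinear geometry to factor out the oscillation as a tensor product (possible only because one can fix the sign of $x_1-y_1$ on each admissible pair) is what reduces the per-level cost to a purely smoothness-driven $O(|\log\eps|^2)$, so all frequency dependence is absorbed into the $O(\log\omega)$ depth of the hierarchical recursion.
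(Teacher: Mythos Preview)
Your approach is correct and reaches the stated bound, but by a genuinely different route from the paper. The paper's proof makes a single near/far split of $X$ and $Y$: the near subsets $X_N, Y_N$ consist of the points within $O(|\log\eps|)$ wavelengths of the shared midpoint $x_1 = \tfrac12$, so each contains only $O(|\log\eps|)$ grid points and the three blocks touching $X_N$ or $Y_N$ have rank bounded by cardinality; the remaining far--far block $X_F \times Y_F$ is then handled by invoking the Martinsson--Rokhlin result (Theorem~\ref{thm:martinsson}) as a black box, which directly furnishes a rank-$O(\log\omega\,|\log\eps|^2)$ separable approximation of $H_0(\omega|\cdot|)$ between well-separated 2D boxes, with the image sources absorbed into a slightly thickened source box of width $W=2h$. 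Your argument instead builds the low-rank structure from scratch via a dyadic admissible partition of depth $O(\log\omega)$ together with a direct per-block envelope estimate. The paper's route is shorter modulo the cited lemma; yours is self-contained and makes explicit the mechanism behind Theorem~\ref{thm:martinsson} in this collinear special case---namely that $e^{i\omega|x-y|} = e^{i\omega x_1}e^{-i\omega y_1}$ is already rank one, so all $\omega$-dependence migrates into the hierarchy depth. One small point to tighten: for the image term $|x - y^*| = \sqrt{(x_1-y_1)^2 + 4h^2}$ the sources are no longer collinear with the targets and the oscillation does not factor exactly; your argument still goes through because the $2h$ offset is sub-wavelength and the residual phase $\omega(\sqrt{r^2+4h^2}-r) = O(h/r)$ remains smooth on each admissible block, but this deserves an explicit sentence rather than ``the same machinery.''
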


The proof of this theorem relies on the following theorem from
\cite{MartinssonRokhlin:07}. Let $H_0(\cdot)$ be the $0$-th order
Hankel function of the first kind.
\begin{theorem} \label{thm:martinsson} Let $\omega$ be the angular
  frequency and $\lambda = 2\pi/\omega$.  Let $W>0$. There exists
  $C(W)$ such that, for $L>0$, $\eps>0$, and $S>C(W) |\log\eps| \cdot
  \frac{2\pi}\lambda$, there exist a constant $J \le \log(\omega L)
  |\log\eps|^2$, functions $\{\phi_j(x)\}_{1\le j \le J}$, and
  functions $\{\chi_j(y)\}_{1 \le j \le J}$ such that
  \[
  \left|H_0(\omega|x-y|) - \sum_{j=1}^J \phi_j(x) \chi_j(y) \right| \le \eps
  \]
  for 
  \[
  y\in [-L, -S/2]\times [-W/2,W/2] \quad\text{and}\quad  x\in [S/2, L]\times [-W/2,W/2].
  \]
\end{theorem}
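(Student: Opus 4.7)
The plan is to build the low-rank approximation from the Sommerfeld / plane-wave representation of the Hankel function,
\[
H_0(\omega|x-y|) \;=\; \frac{1}{\pi i}\int_{-\infty}^{\infty}\frac{e^{\,i\sqrt{\omega^2-k^2}\,(x_1-y_1)}\,e^{\,ik(x_2-y_2)}}{\sqrt{\omega^2-k^2}}\,dk,
\]
valid for $x_1>y_1$ with the branch $\operatorname{Im}\sqrt{\omega^2-k^2}\ge 0$. Since the integrand factors as $\phi_k(x)\chi_k(y)$ for every fixed $k$, any $J$-node quadrature for this integral immediately yields a rank-$J$ separable approximation of the form demanded by the theorem. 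The proof therefore reduces to exhibiting a quadrature rule with $J\le \log(\omega L)\,|\log\eps|^2$ nodes whose error is at most $\eps$ uniformly in $(x,y)\in X\times Y$.

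I would split the integral into an \emph{evanescent} part $|k|>\omega$ and a \emph{propagating} part $|k|\le\omega$ and handle them separately. In the evanescent part $\sqrt{\omega^2-k^2}=i\sqrt{k^2-\omega^2}$, so the integrand carries the factor $e^{-\sqrt{k^2-\omega^2}\,(x_1-y_1)}$ with $x_1-y_1\ge S$. The hypothesis $S>C(W)\omega|\log\eps|$ then permits truncation at $|k|=K$ with $K=\omega(1+o(1))$ at a tail cost of $\eps$; on the small remaining intervals $[\omega,K]$ and $[-K,-\omega]$, where the integrand has an integrable $(k-\omega)^{-1/2}$ endpoint singularity but is otherwise analytic, a Gauss--Jacobi rule with $O(|\log\eps|)$ nodes delivers accuracy $\eps$ by standard quadrature estimates for analytic integrands.

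For the propagating part, the substitution $k=\omega\sin\theta$ turns the contribution into
\[
\int_{-\pi/2}^{\pi/2} e^{\,i\omega[(x_1-y_1)\cos\theta+(x_2-y_2)\sin\theta]}\,d\theta,
\]
whose integrand is still of separable form in $x$ and $y$. The phase $\Phi(\theta;x,y)=\omega[(x_1-y_1)\cos\theta+(x_2-y_2)\sin\theta]$ has amplitude up to $2\omega L$ but a single stationary point $\theta^\star=\arctan((x_2-y_2)/(x_1-y_1))$, which the separation hypothesis confines to $|\theta^\star|\le W/S$. I would then apply a dyadic partition of $[-\pi/2,\pi/2]$ centred at the origin: on each dyadic cell, after extracting the leading oscillatory factor via a stationary-phase reduction, the residual integrand is analytic in $\theta$ inside a Bernstein ellipse of order-one size and can be captured by an $O(|\log\eps|)$-point Chebyshev quadrature. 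There are $O(\log(\omega L))$ dyadic cells, and making the quadrature nodes uniform in the remaining $(x,y)$ dependence contributes an additional $|\log\eps|$ factor, giving a propagating-part node count of $O(\log(\omega L)\,|\log\eps|^2)$. Summing with the evanescent contribution yields the claimed bound.

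The principal obstacle is the propagating analysis: showing that a \emph{single} quadrature scheme works uniformly in $(x,y)$ while the stationary point $\theta^\star$ wanders through an interval. The explicit condition $S>C(W)\omega|\log\eps|$ is what makes this possible: it squeezes $\theta^\star$ into a single dyadic cell near the origin, so one hierarchical grid of nodes suffices for all pairs. The rest of the argument is analytic-function bookkeeping (Chebyshev node estimates, dyadic summation of errors), but the precise counting that produces $|\log\eps|^2$ rather than $|\log\eps|$ comes from a two-dimensional Chebyshev product expansion in both the $\theta$ variable and the slowly varying $(x,y)$ parameters, and verifying this uniform tightness is the main technical content to track.
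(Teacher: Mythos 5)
First, a point of orientation: the paper does not prove this statement at all --- Theorem \ref{thm:martinsson} is imported verbatim from \cite{MartinssonRokhlin:07} and used as a black box in the proof of Theorem \ref{thm:lowrank} --- so your argument has to stand on its own, and on its own it has a genuine gap. The Weyl/Sommerfeld plane-wave representation plus a pair-independent quadrature is a reasonable starting framework, and your evanescent part is broadly plausible, though even there the claim that a single Gauss--Jacobi rule with $O(|\log\eps|)$ nodes is accurate \emph{uniformly} in $x_1-y_1\in[S,2L]$ is not justified: the integrand $e^{-\sqrt{k^2-\omega^2}\,(x_1-y_1)}$ lives on scales that vary by the factor $L/S$, so one needs a dyadic subdivision near the branch point (or of the boxes themselves), costing an extra $O(\log(L/S))$ factor --- fixable, and within the stated budget, but missing.

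The real gap is the propagating part. On a dyadic $\theta$-cell at distance $\sim 2^{-j}$ from the origin, the phase $\omega r\cos(\theta-\theta^\star)$ (with $r=|x-y|$ up to $\sim 2L$) varies across the cell by roughly $\omega r\,4^{-j}$, so for the order-one cells a fixed Chebyshev rule needs $\Omega(\omega L)$ nodes, not $O(|\log\eps|)$; summing over cells gives rank $\Theta(\omega L)$, exponentially worse than the claimed $\log(\omega L)|\log\eps|^2$. Your proposed remedy --- extracting the leading oscillatory factor by a stationary-phase reduction on each cell --- extracts a factor depending on the pair $(x,y)$ through $r$ and $\theta^\star$; once the integrand is modified in a pair-dependent way, the quadrature no longer yields functions $\phi_j,\chi_j$ common to all $(x,y)$, i.e.\ it does not produce a rank-$J$ separated approximation in the sense of the theorem. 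More structurally, a pair-independent angular discretization is exactly the diagonal (plane-wave) form of the high-frequency FMM, whose direction count necessarily scales like the box size in wavelengths, here $\Theta(\omega L)$; no bookkeeping in $\theta$ alone can bring this down to polylogarithmic. The logarithmic rank is true only because of the geometry you never exploit: the boxes are thin and collinear, $|x_2-y_2|\le W$ while $x_1-y_1\ge S$. A workable proof partitions the elongated boxes dyadically in the $x_1$ direction (a pair-independent operation under which ranks simply add, giving the $\log(\omega L)$ factor), peels off the exactly separable oscillation $e^{i\omega(x_1-y_1)}=e^{i\omega x_1}e^{-i\omega y_1}$, and then expands the bounded residual phase $\approx \omega(x_2-y_2)^2/\bigl(2(x_1-y_1)\bigr)$ together with the smooth amplitude $\sim|x-y|^{-1/2}$, which is where the $|\log\eps|^2$ comes from; your sketch uses the thinness $W$ only to locate $\theta^\star$, which is not enough. (A side remark: as printed the hypothesis reads $S>C(W)|\log\eps|\cdot 2\pi/\lambda=C(W)|\log\eps|\,\omega$, whereas the paper later invokes the theorem with a gap of $C(W)|\log(\eps/2)|\,\lambda$; whichever reading is intended, some quantitative ``parabolic'' coupling between $S$, $W$ and $\omega$ is exactly what a correct proof must use, and your argument does not articulate it.)
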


\begin{figure}[h!]
  \begin{center}
    \includegraphics{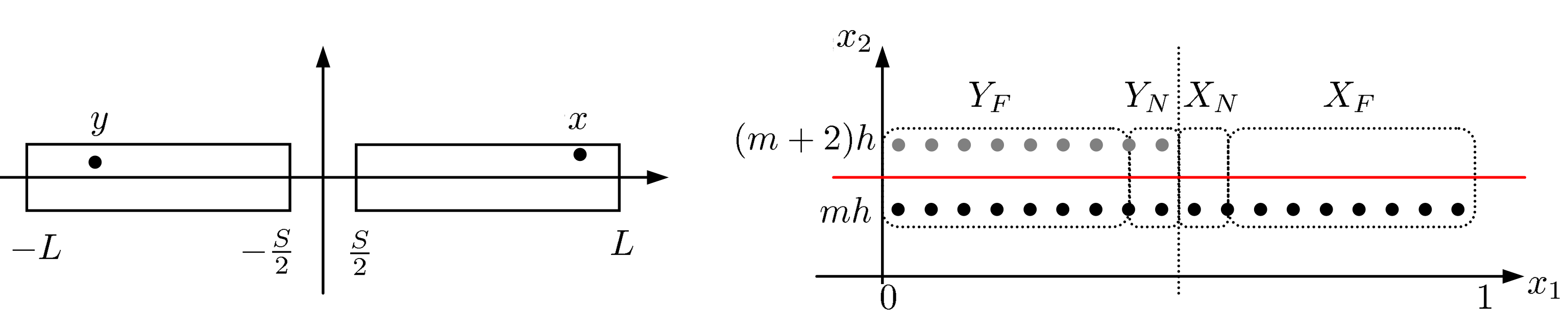}
  \end{center}
  \caption{Left: The setting of Theorem \ref{thm:martinsson}. Right:
    The setting of Theorem \ref{thm:lowrank}.  }
  \label{fig:thms}
\end{figure}

The setting of this theorem is illustrated in Figure \ref{fig:thms}
(left). Using Theorem \ref{thm:martinsson}, the proof of Theorem
\ref{thm:lowrank} goes as follows.
\begin{proof}[Proof of Theorem \ref{thm:lowrank}]
  Let $W=2h$. We partition the set $X$ into the union of the near set
  $X_N$ and the far set $X_F$ depending on the distance from $Y$.
  \begin{eqnarray*}
    &&X_N = \left\{ p=(p_1,p_2)\in X, p_1 \le \hf + \hf C(W)|\log(\eps/2)|\lambda \right\} \\
    &&X_F = \left\{ p=(p_1,p_2)\in X, p_1 >   \hf + \hf C(W)|\log(\eps/2)|\lambda \right\}.
  \end{eqnarray*}
  Similarly, $Y$ is partitioned into the union of $Y_N$ and $Y_F$
  \begin{eqnarray*}
    &&Y_N = \left\{ p=(p_1,p_2)\in Y, p_1 \ge \hf - \hf C(W)|\log(\eps/2)|\lambda \right\} \\
    &&Y_F = \left\{ p=(p_1,p_2)\in Y, p_1 <   \hf - \hf C(W)|\log(\eps/2)|\lambda \right\}.
  \end{eqnarray*}
  See Figure \ref{fig:thms} (right). These partitionings introduce a
  natural block structure for the matrix $(G(x,y))_{x\in X, y\in Y}$:
  \begin{equation}
  \begin{pmatrix}
    (G(x,y))_{x\in X_N, y\in Y_N} && (G(x,y))_{x\in X_N, y\in Y_F} \\
    (G(x,y))_{x\in X_F, y\in Y_N} && (G(x,y))_{x\in X_F, y\in Y_F}
  \end{pmatrix}
  \label{eq:GXY}
  \end{equation}
  Let $p = \lambda/h$ be the number of points per wavelength. It is
  clear from the definition of $X_N$ and $Y_N$ that each of them has
  at most $ \hf C(W) |\log \eps|\lambda /h = \hf C(2h) |\log \eps| p$
  points. Hence the ranks of the $(1,1)$, $(1,2)$, and
  $(2,1)$ blocks of \eqref{eq:GXY} are all bounded from above by
  $\hf C(2h) |\log \eps| p$.
  
  Let us consider the $(2,2)$ block. Define $\M(Y_F)$ to be the
  mirror image set of the set $Y_F$ with respect to the line
  $x_2=(m+1)h$. Due to the zero Dirichlet boundary condition at
  $x_2=(m+1)h$, for $x\in X_F$ and $y\in Y_F$
  \[
  G(x,y) = H_0(\omega|x-y|) - H_0(\omega|x-\M(y)|)
  \]
  where $\M(y)\in \M(Y_F)$ is the mirror image of $y$. $Y_F \bigcup
  \M(Y_F)$ is contained in the box
  \[
  \left[0, \hf - \hf C(W)|\log(\eps/2)|\lambda\right]    \times [mh,(m+2)h]
  \]
  and $X_F$ is in
  \[
  \left[\hf + \hf C(W)|\log(\eps/2)|\lambda, 1\right]    \times [mh,(m+2)h].
  \]
  Since the distance between these two boxes is
  $C(W)|\log(\eps/2)|\lambda$ and their widths are bounded by $1$,
  Theorem \ref{thm:martinsson} guarantees that there exist a constant
  $J \le \log(\omega) |\log(\eps/2)|^2$, functions
  $\{\phi_j(x)\}_{1\le j \le J}$, and functions $\{\chi_j(y)\}_{1\le j
    \le J}$ such that
  \[
  \left|H_0(\omega|x-y|) - \sum_{j=1}^J \phi_j(x) \chi_j(y) \right| \le \frac{\eps}{2}
  \]
  for $x\in X_F$ and $y\in Y_F \bigcup \M(Y_F)$. This implies that
  \[
  \left| G(x,y) - \sum_{j=1}^J \phi_j(x) (\chi_j(y)-\chi_j(\M(y))) \right| \le \eps.
  \]
  
  Combining this with the estimates for the other three blocks shows
  that there exists $R = \frac{3}{2} C(2h) |\log(\eps/2)| p +
  \log(\omega) |\log(\eps/2)|^2 = O(\log(\omega) |\log\eps|^2)$ and
  functions $\{\alpha_r(x)\}_{1 \le r \le R}$ for $x\in X$ and
  functions $\{\beta_r(y)\}_{1\le r \le R}$ for $y\in Y$ such that
  \[
  \left| G(x,y) = \sum_{r=1}^R \alpha_r(x) \beta_r(y) \right| \le \eps \quad\text{for}\quad x\in X, y\in Y.
  \]
\end{proof}


For a fixed $\eps$, Theorem \ref{thm:lowrank} shows that the rank $R$
grows logarithmically with respect to $\omega$ (and thus to
$n$). Though the theorem states the result under the case that $X$
contains the points on the left half and $Y$ contains the points on
the right half, it also applies to any disjoint intervals $X$ and $Y$
on $x_2=mh$ due to the translational invariance of the kernel $G(x,y)$
in the $x_1$ direction. It is also clear that, when $X$ and $Y$ are
well-separated from each other, the actual rank $R$ should be smaller.

Theorem \ref{thm:lowrank} can be extended to the case of smooth
layered media where the velocity variation only depends on $x_1$. In
this case, the restriction of the Green's function to $x_2=mh$ does
not develop caustics. Therefore, the geometric optics representation
$A(x,y) e^{i\omega \Phi(x,y)}$ of the Green's function for $x\in X$
and $y\in Y$ can be made sufficiently accurate as long as $X$ and $Y$
are well-separated. The amplitude $A(x,y)$ is numerically low-rank due
to its smoothness. The phase term is also numerically low-rank since
for the layered media $\Phi(x,y) = \tau(x) - \tau(y)$ where
$\tau(\cdot)$ is the travel time function from a fixed
point. Therefore, their product, the Green's function $G(x,y)$, is
also numerically low rank for well-separated $X$ and $Y$.

Numerical experiments confirm the result of Theorem
\ref{thm:lowrank}. For the constant coefficient case $c(x)=1$ with
$\frac{\omega}{2\pi}= 32$ ($n=256$), Figure \ref{fig:ranks} (left)
shows the numerical ranks of the off-diagonal blocks of $T_m$ for
$m=128$. For each off-diagonal block, the singular values of this
block are calculated and the value in each block indicates the number
of singular values that are greater than $10^{-6}$. For non-constant
velocity fields $c(x)$, the rank estimate would depend on the
variations in $c(x)$ and numerical results suggest that the
off-diagonal blocks of $T_m$ and $S_m$ still admit this low-rankness
property for a wide class of $c(x)$. An example for the non-constant
velocity field is given in Figure \ref{fig:ranks} (middle).

We would like to emphasize that both the Sommerfeld boundary condition
and the layer-by-layer sweeping order are essential. To illustrate
that, we perform the same test with the same threshold $10^{-6}$ but
with zero Dirichlet boundary condition. The result of $T_m$ for
$m=128$ is plotted in Figure \ref{fig:ranks} (right). It is clear that
the rank of a off-diagonal block is much higher and grows almost
linearly with respect to the size of the block. This clearly shows the
importance of the Sommerfeld boundary condition. A similar matrix
$T_m$ would also appear if one adopts different elimination orders
such as the one of multifrontal methods or the one proposed in
\cite{Martinsson:09}. Therefore, these elimination orders do not
result efficient solution methods for the Helmholtz equation.

\begin{figure}[h!]
  \begin{center}
    \includegraphics[width=1.8in]{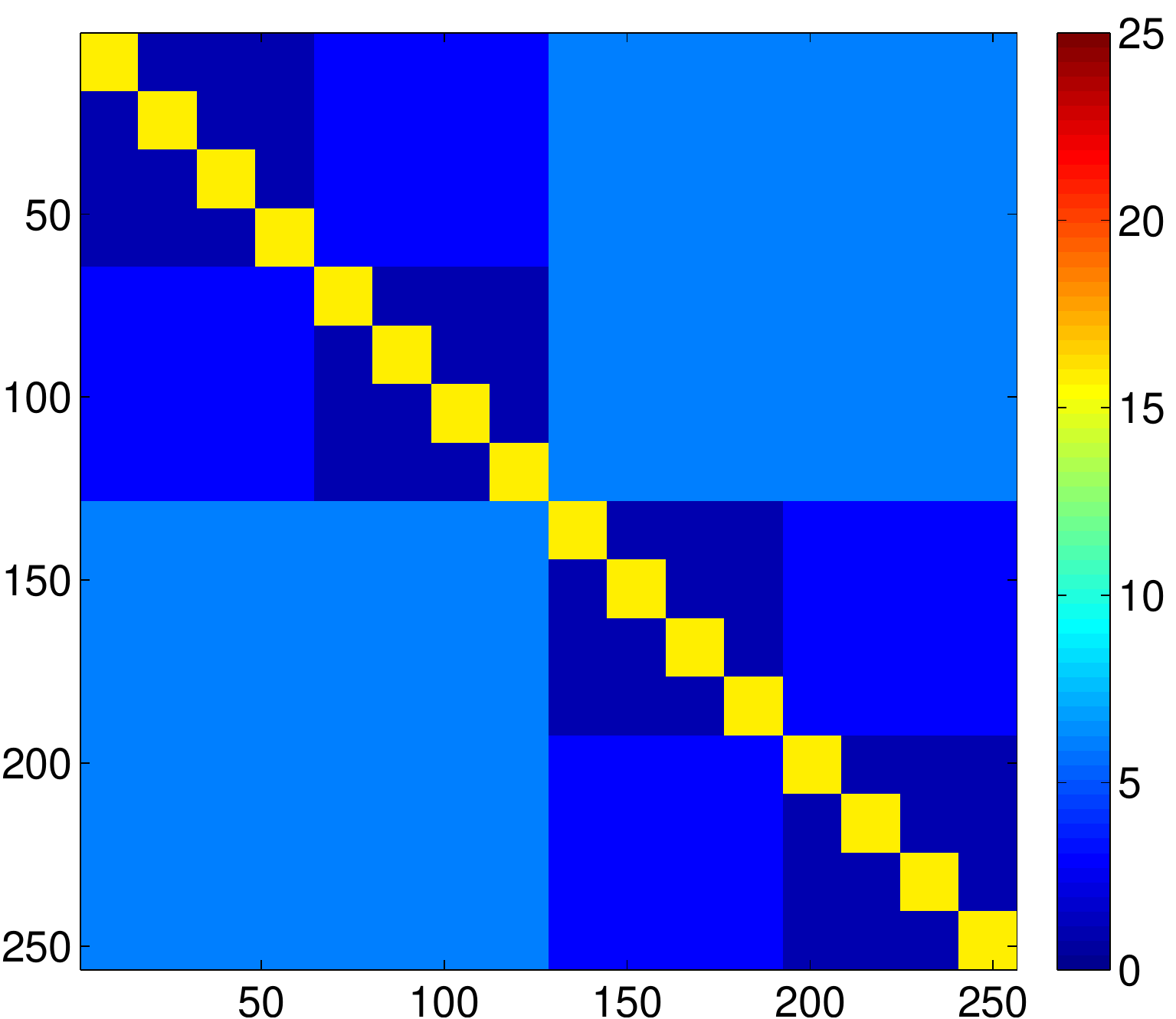}
    \includegraphics[width=1.8in]{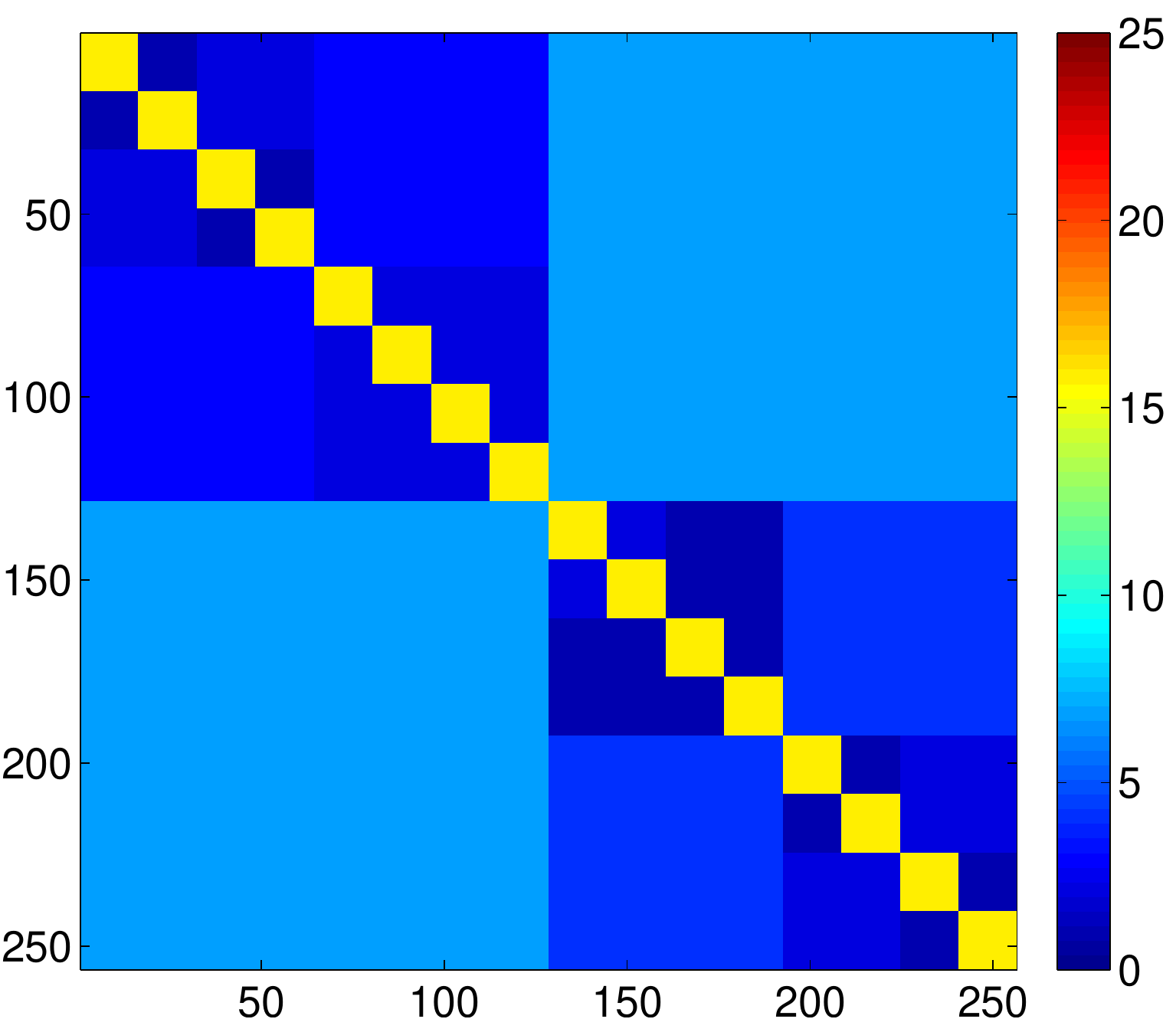}
    \includegraphics[width=1.8in]{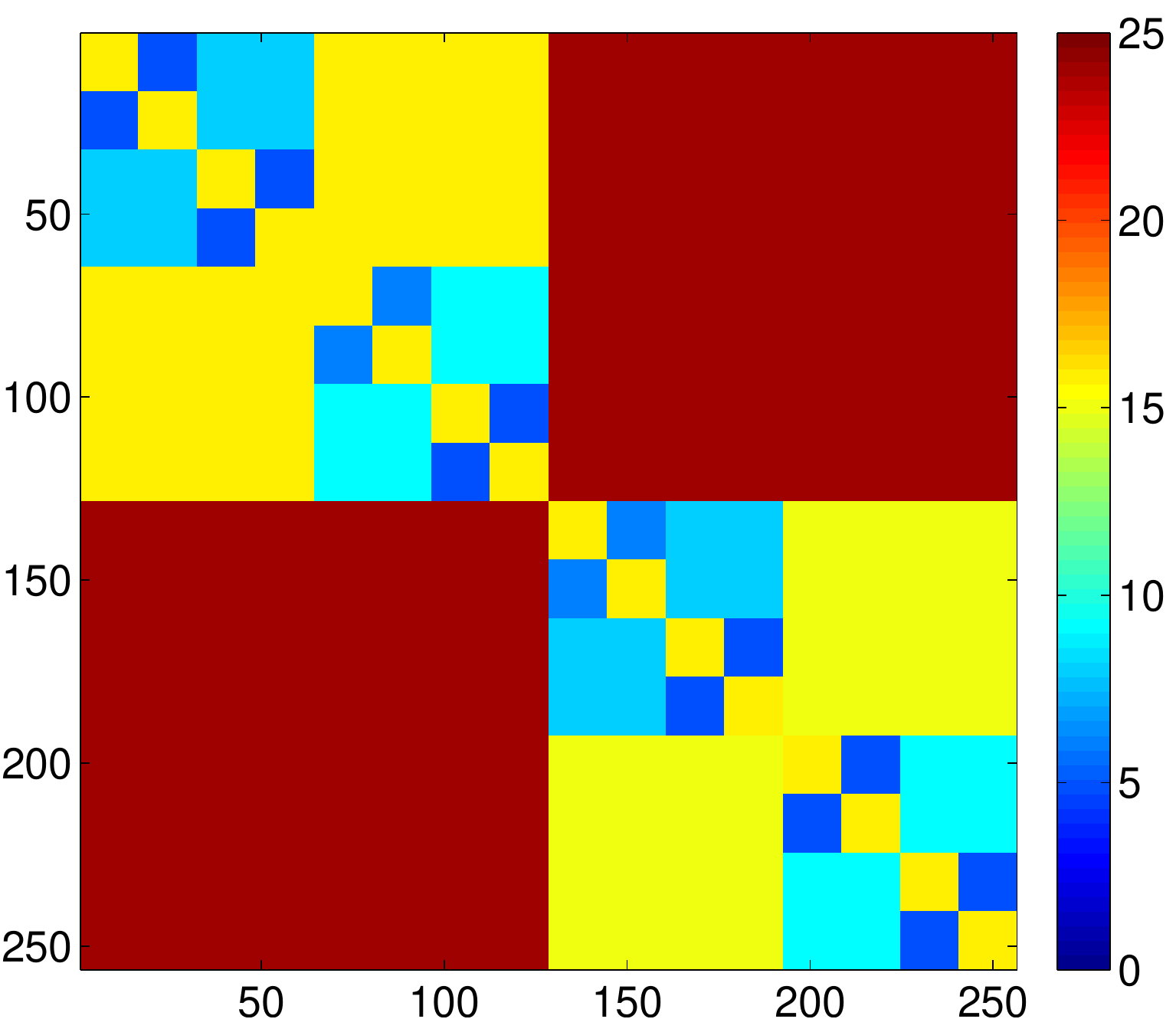}
  \end{center}
  \caption{Numerical ranks of off-diagonal blocks of $T_m$. Left:
    Constant coefficient case with PML boundary condition. Middle:
    Non-constant coefficient case with PML boundary condition.  Right:
    Constant coefficient case with zero Dirichlet boundary condition.
  }
  \label{fig:ranks}
\end{figure}

\subsection{Hierarchical matrix representation}

Since $T_m$ and $S_m$ are highly compressible with numerically
low-rank off-diagonal blocks, it is natural to represent these
matrices using the hierarchical matrix (or $\H$-matrix) framework
proposed by Hackbusch et al
\cite{BormGrasedyckHackbusch:06,GrasedyckHackbusch:03,Hackbusch:99},
where off-diagonal blocks are represented in low-rank factorized form.
The discussion below is by no means original and is included for the
sake of completeness.

At the $m$-th layer for any fixed $m$, we construct a hierarchical
decomposition of the grid points in $\P_m$ through bisection. At level
0 (the top level), the set
\[
J^0_1 = \P_m.
\] 
At level $\ell$, there are $2^\ell$ sets $\J^\ell_i$ for
$i=1,\ldots,2^\ell$ given by
\[
\J^\ell_i = \{ p_{t,m}: (i-1)\cdot n/2^\ell + 1 \le t \le i\cdot n/2^\ell \}.
\]
The bisection is stopped when each set $\J^\ell_i$ contains only a
small number of indices. Hence, the number of total levels $L$ is
equal to $\log_2 n - O(1)$ (see Figure \ref{fig:2Dpart} (left)). We
often write $G(\J^\ell_i, \J^\ell_{i'})$ (the restriction of a matrix
$G$ to $\J^\ell_i$ and $\J^\ell_{i'}$) as $G^\ell_{i,i'}$.

\begin{figure}[h!]
  \begin{center}
    \includegraphics{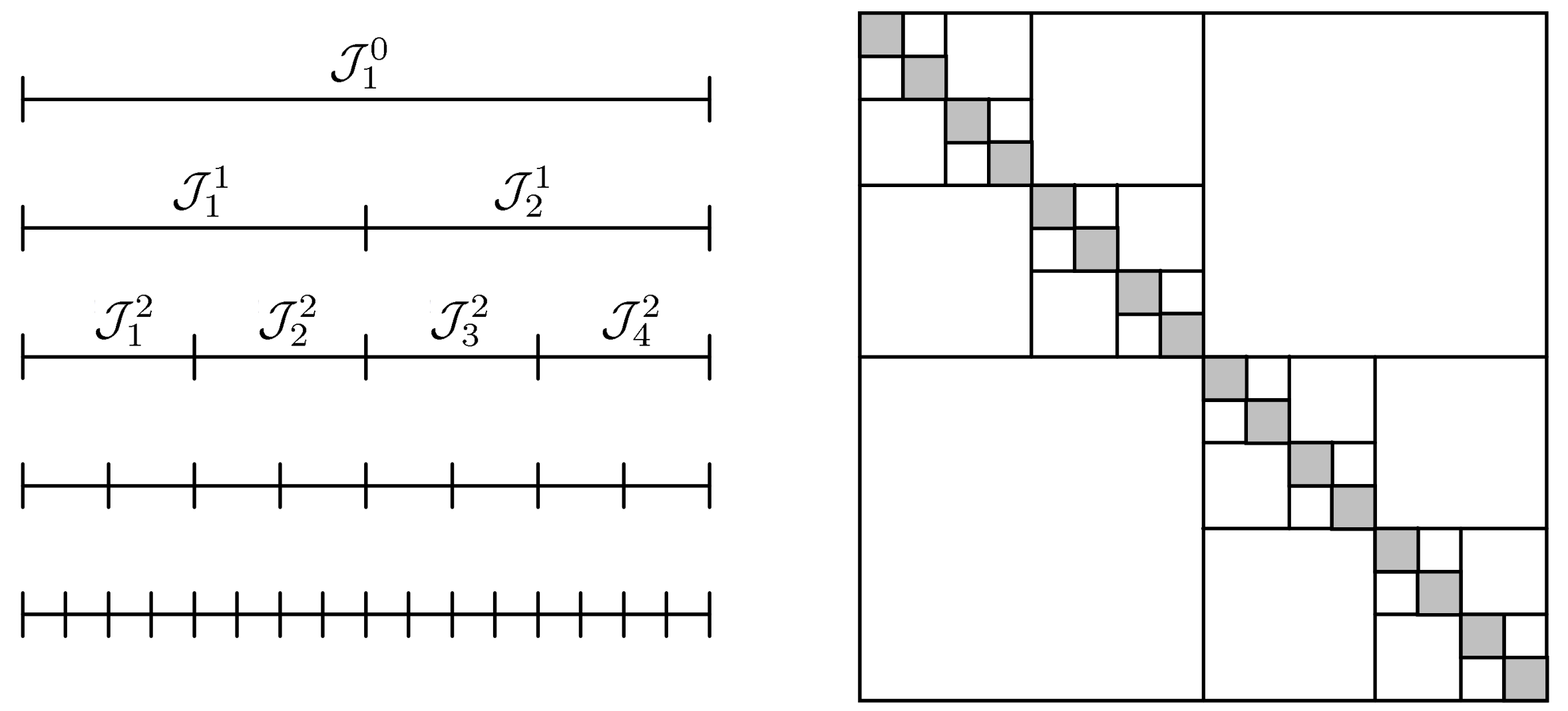}
  \end{center}
  \caption{Hierarchical matrix representation. Left: Hierarchical
    partitioning of the index set $\J$ for each layer. Right: Induced
    partitioning of the matrix $T_m$ in the weakly admissible
    case. Off-diagonal blocks (in white) are stored in low-rank
    factorized form. Diagonal blocks (in gray) are stored densely.  }
  \label{fig:2Dpart}
\end{figure}

The hierarchical matrix representation relies on the notion of {\em
  well-separatedness} between different sets. If $\J^\ell_i$ and
$\J^\ell_{i'}$ are well-separated from each other, then $G(\J^\ell_i,
\J^\ell_{i'})$ is allowed to be stored in a low-rank factorized
form. There are two different choices of the notion of
well-separatedness \cite{BormGrasedyckHackbusch:06}. In the {\em
  weakly admissible case}, $\J^\ell_i$ and $\J^\ell_{i'}$ are
well-separated if and only if they are disjoint. In the {\em strongly
  admissible case}, $\J^\ell_i$ and $\J^\ell_{i'}$ are well-separated
if and only if the distance between them is greater than or equal to
their width. Next, define the interaction list of $\J^\ell_{i}$ to be
the set of all index sets $\J^\ell_{i'}$ such that $\J^\ell_{i}$ is
well-separated from $\J^\ell_{i'}$ but $\J^\ell_{i}$'s parent is not
well-separated from $\J^\ell_{i'}$'s parent. It is clear from this
definition that being a member of another set's interaction list is a
symmetric relationship.

\subsubsection{Weakly admissible case}
In the weakly admissible case, the interaction list of $\J^\ell_{2i}$
contains only $\J^\ell_{2i-1}$ and vice versa.

\paragraph{Matrix representation.}  For a fixed $\eps$, let $R =
O(\log \omega) = O(\log n)$ be the maximum over the ranks of the
off-diagonal blocks on all levels. For a given matrix $G$, the
hierarchical matrix framework represents all blocks $G^\ell_{i,i'} =
G(\J^\ell_i, \J^\ell_{i'})$ with $\J^\ell_i$ and $\J^\ell_{i'}$ in
each other's interaction list in the factorized form with rank less
than or equal to $R$.  For example, at the first level, the two
off-diagonal blocks $G^1_{1,2} = G(\J^1_1,\J^1_2)$ and $G^1_{2,1} =
G(\J^1_2,\J^1_1)$ are represented with
\[
G^1_{1,2} \approx U^1_{1,2} (V^1_{1,2})^t  \quad\text{and}\quad
G^1_{2,1} \approx U^1_{2,1} (V^1_{2,1})^t,
\]
where each of $U^1_{1,2}$, $U^1_{2,1}$, $V^1_{1,2}$, $V^1_{2,1}$ has
at most $R$ columns. At the second level, the new off-diagonal blocks
are $G^2_{1,2}$, $G^2_{2,1}$, $G^2_{3,4}$, and $G^2_{4,3}$, each
represented in a similar way. Finally, at level $L-1$, all diagonal
blocks $G^{L-1}_{i,i}$ for $i=1,\ldots, 2^{L-1}$ are stored
densely. This representation is illustrated in Figure \ref{fig:2Dpart}
(right). The total storage cost is $O(R n\log n)$.

\paragraph{Matrix-vector multiplication.} 
Let us consider the product $G f$ where $f$ is a vector of size
$n$. Denote by $f^\ell_i$ the part of $f$ restricted to
$I^\ell_i$. Using the block matrix form, the product is
\[
\begin{pmatrix}
  G^1_{1,1} & G^1_{1,2} \\
  G^1_{2,1} & G^1_{2,2}
\end{pmatrix}
\begin{pmatrix}
  f^1_1 \\
  f^1_2
\end{pmatrix}
= 
\begin{pmatrix}
  G^1_{1,1} f^1_1 + G^1_{1,2} f^1_2 \\
  G^1_{2,1} f^1_1 + G^1_{2,2} f^1_2
\end{pmatrix}.
\]
First, the product $G^1_{1,2} f^1_2$ is computed with $G^1_{1,2} f^1_2
\approx U^1_{1,2} \left( (V^1_{1,2})^t f^1_2 \right)$. The same is
carried out for the product $G^1_{2,1} f^1_1$.  Second, the
computation of $G^1_{1,1} f^1_1$ and $G^1_{2,2} f^1_2$ is done
recursively since both $G^1_{1,1}$ and $G^1_{2,2}$ are in the
hierarchical matrix form. We denote this matrix-vector multiplication
procedure by $\ms{hmatvec}(G,f)$ and its computational cost is $O(R
n\log n)$.

\paragraph{Matrix addition and subtraction.}

Consider the sum of two matrices $G$ and $H$ with their off-diagonal
blocks represented in the factorized form by $G^\ell_{i,j} \approx
U^\ell_{i,j} (V^\ell_{i,j})^t$ and $H^\ell_{i,j} \approx X^\ell_{i,j}
(Y^\ell_{i,j})^t$. Under the block matrix notation, the sum is
\[
\begin{pmatrix}
G^1_{1,1} & G^1_{1,2}\\
G^1_{2,1} & G^1_{2,2}
\end{pmatrix}
+ 
\begin{pmatrix}
H^1_{1,1} & H^1_{1,2}\\
H^1_{2,1} & H^1_{2,2}
\end{pmatrix}
=
\begin{pmatrix}
G^1_{1,1}+H^1_{1,1} & G^1_{1,2}+H^1_{1,2}\\
G^1_{2,1}+H^1_{2,1} & G^1_{2,2}+H^1_{2,2}
\end{pmatrix}.
\]
First, $G^1_{1,2} + H^1_{1,2} \approx U^1_{1,1} (V^1_{1,2})^t +
X^1_{1,2} (Y^1_{1,2})^t = \left( U^1_{1,2}, X^1_{1,2}\right) \left(
  V^1_{1,2}, Y^1_{1,2}\right)^t$. One needs to recompress the last two
matrices in order to prevent the rank of the low rank factorization
from increasing indefinitely. This can be done by computing QR
decomposition of $(U^1_{1,2}, X^1_{1,2})$ and $(V^1_{1,2},
Y^1_{1,2})$, followed by a truncated SVD of a matrix of small
size. The same procedure is carried out for $G^1_{2,1} + H^1_{2,1}$ to
compute the necessary factorization. Second, let us consider the
diagonal blocks. $G^1_{1,1} + H^1_{1,1}$ and $G^1_{2,2} + H^1_{2,2}$
are done recursively since they are two sums of the same nature but
only half the size. This addition procedure is denoted by
$\ms{hadd}(G,H)$. The subtraction procedure is almost the same and is
denoted by $\ms{hsub}(G,H)$. Both of them take $O(R^2 n\log n)$ steps.

\paragraph{Matrix multiplication.}

Let us consider the sum of two matrices $G$ and $H$ with their
off-diagonal blocks represented by $G^\ell_{i,j} \approx U^\ell_{i,j}
(V^\ell_{i,j})^t$ and $H^\ell_{i,j} \approx X^\ell_{i,j}
(Y^\ell_{i,j})^t$.  Under the block matrix form, the product is
\[
\begin{pmatrix}
G^1_{1,1} & G^1_{1,2}\\
G^1_{2,1} & G^1_{2,2}
\end{pmatrix}
\cdot  
\begin{pmatrix}
H^1_{1,1} & H^1_{1,2}\\
H^1_{2,1} & H^1_{2,2}
\end{pmatrix}
=
\begin{pmatrix}
  G^1_{1,1} H^1_{1,1} + G^1_{1,2} H^1_{2,1} & G^1_{1,1} H^1_{1,2} + G^1_{1,2} H^1_{2,2} \\
  G^1_{2,1} H^1_{1,1} + G^1_{2,2} H^1_{2,1} & G^1_{2,1} H^1_{1,2} + G^1_{2,2} H^1_{2,2}
\end{pmatrix}.
\]
First, the off-diagonal block $G^1_{1,1} H^1_{1,2} + G^1_{1,2}
H^1_{2,2} \approx G^1_{1,1} X^1_{1,2} (Y^1_{1,2})^t + U^1_{1,2}
(V^1_{1,2})^t H^1_{2,2}$.  The computation $G^1_{1,1} X^1_{1,2}$ and
$(V^1_{1,2})^t H^1_{2,2}$ are essentially matrix-vector
multiplications. Once they are done, the remaining computation is then
similar to the off-diagonal part of the matrix addition algorithm. The
other off-diagonal block $G^1_{2,1} H^1_{1,1} + G^1_{2,2} H^1_{2,1}$
is done in the same way.  Next, consider the diagonal blocks. Take
$G^1_{1,1} H^1_{1,1} + G^1_{1,2} H^1_{2,1}$ as an example. The first
part $G^1_{1,1} H^1_{1,1}$ is done using recursion. The second part is
$ G^1_{1,2} H^1_{2,1} \approx U^1_{1,2} (V^1_{1,2})^t X^1_{2,1}
(Y^1_{2,1})^t$, where the middle product is carried out first in order
to minimize the computational cost.  The final sum $G^1_{1,1}
H^1_{1,1} + G^1_{1,2} H^1_{2,1}$ is done using the matrix addition
algorithm described above. The same procedure can be carried out for
$G^1_{2,1} H^1_{1,2} + G^1_{2,2} H^1_{2,2}$. This matrix
multiplication procedure is denoted by $\ms{hmul}(G,H)$ and its
computational cost is $O(R^2 n\log^2 n)$.

\paragraph{Matrix inversion.} The inverse of $G$ is done by performing
a $2\times 2$ block matrix inversion:
\[
\begin{pmatrix}
G^1_{1,1} & G^1_{1,2}\\
G^1_{2,1} & G^1_{2,2}
\end{pmatrix}^{-1}
=
\begin{pmatrix}
(G^1_{1,1})^{-1} + (G^1_{1,1})^{-1}G^1_{1,2}S^{-1}G^1_{2,1}(G^1_{1,1})^{-1} &  -(G^1_{1,1})^{-1} G^1_{1,2} S^{-1}\\
-S^{-1} G^1_{2,1} (G^1_{1,1})^{-1} & S^{-1}
\end{pmatrix}
\]
where $S = G^1_{2,2} - G^1_{2,1} (G^1_{1,1})^{-1} G^1_{1,2}$. The
computation of this formula requires matrix additions and
multiplications, along with the inversion of two matrices $S$ and
$G^1_{1,1}$, half of the original size. The matrix additions and
multiplications are carried out by the above procedures, while the
inversion are done recursively. This matrix inversion procedure is
denoted by $\ms{hinv}(G)$ and its cost is $O(R^2 n\log^2 n)$.

\paragraph{Multiplication with a diagonal matrix.} Finally, we
consider the multiplication of $G$ with a diagonal matrix $D$.  Denote
the two diagonal blocks of $D$ on the first level by $D^1_{1,1}$ and
$D^1_{2,2}$, both of which are diagonal matrices. In the block matrix
form, the product becomes
\[
\begin{pmatrix}
G^1_{1,1} & G^1_{1,2}\\
G^1_{2,1} & G^1_{2,2}
\end{pmatrix}
\cdot  
\begin{pmatrix}
D^1_{1,1} & \\
 & D^1_{2,2}
\end{pmatrix}
=
\begin{pmatrix}
  G^1_{1,1} D^1_{1,1} & G^1_{1,2} D^1_{2,2} \\
  G^1_{2,1} D^1_{1,1} & G^1_{2,2} D^1_{2,2}
\end{pmatrix}.
\]
Consider the off-diagonal blocks first. For example, $G^1_{1,2}
D^1_{2,2} \approx U^1_{1,2} (V^1_{1,2})^t D^1_{2,2}$ and this is done
by scaling each columns of $(V^1_{1,2})^t$ by the corresponding
diagonal entries of $D^1_{2,2}$. The same is true for $G^1_{2,1}
D^1_{1,1}$. For the diagonal blocks, say $G^1_{1,1} D^1_{1,1}$, we
simply apply recursion since $G^1_{1,1}$ is itself a hierarchical
matrix and $D^1_{1,1}$ is diagonal.  This special multiplication
procedure is denoted by $\ms{hdiagmul}(G,D)$ if $D$ is on the right or
$\ms{hdiagmul}(D,G)$ if $D$ is on the left. The cost of both
procedures is $O(R n\log n)$.

\subsubsection{Strongly admissible case}

The matrix representation and operations in the strongly admissible
case are similar to the ones in the weakly admissible case. The only
one that requires significant modification is the matrix
multiplication procedure $R = \ms{hmul}(G,H)$, where the most common
step is the calculation of
\begin{equation}
R^\ell_{i,i''} \leftarrow G^\ell_{i,i'} H^\ell_{i',i''}.
\label{eq:RGH}
\end{equation}
In order to simplify the discussion, we denote a matrix symbolically
by $\ms{H}$ if it is in hierarchical form and by $\ms{F}$ if it is
represented in a factorized form. The product \eqref{eq:RGH} can then
take one of the following eight forms
\[
\begin{matrix}
  \ms{H} = \ms{H} \cdot \ms{H}, &
  \ms{H} = \ms{H} \cdot \ms{F}, &
  \ms{H} = \ms{F} \cdot \ms{H}, &
  \ms{H} = \ms{F} \cdot \ms{F}, \\
  \ms{F} = \ms{H} \cdot \ms{H}, &
  \ms{F} = \ms{H} \cdot \ms{F}, &
  \ms{F} = \ms{F} \cdot \ms{H}, &
  \ms{F} = \ms{F} \cdot \ms{F}. \\
\end{matrix}
\]
All of them except one have already appeared in the matrix
multiplication procedure of the {\em weakly admissible case} and the
only one that is new is $\ms{F} = \ms{H} \cdot \ms{H}$. We implement
this using the randomized SVD algorithm proposed recently in
\cite{HalkoMartinssonTropp:09,LibertyWoolfeMartinssonRokhlinTygert07}
for numerically low-rank matrices. The main idea of this randomized
algorithm is to capture the column (or row) space of the matrix by
multiplying the matrix with a small number of Gaussian random test
vectors. Results from random matrix theory guarantee that the column
space of the product matrix approximates accurately the span of all
dominant singular vectors of the original (numerically low-rank)
matrix. Since the product matrix has much fewer columns, applying
singular value decompositions to it gives rise to an accurate and
efficient way to approximate the SVD of the original matrix. Notice
that this randomized approach only requires a routine to apply the
original matrix to an arbitrary vector and everything else is just
standard numerical linear algebra. In our setting, applying $\ms{H}
\cdot \ms{H}$ to a vector is simply equal to two $\ms{hmatvec}$
operations.

\subsection{Approximate inversion and preconditioner}

Let us denote the approximations of $S_m$ and $T_m$ in the
hierarchical matrix representation by $\wt{S}_m$ and $\wt{T}_m$,
respectively. The construction of the approximate $LDL^t$
factorization of $H$ takes the following steps.
\begin{algo}
  Construction of the approximate sweeping factorization of $H$ in the
  hierarchical matrix framework.
  \label{alg:setup}
\end{algo}
\begin{algorithmic}[1]
  \STATE $\wt{S}_1 = A_{1,1}$ and $\wt{T}_1 = \ms{hinv}(\wt{S}_1)$.
  \FOR{$m=2,\ldots,n$}
  \STATE $\wt{S}_m = \ms{hsub}(A_{m,m}, \ms{hdiagmul}(A_{m,m-1},\ms{hdiagmul}(\wt{T}_{m-1},A_{m-1,m}))$
  and $\wt{T}_m = \ms{hinv}(\wt{S}_m)$.
  \ENDFOR
\end{algorithmic}
The cost of Algorithm \ref{alg:setup} is $O(R^2 n^2 \log^2 n) = O(R^2
N \log^2 N)$. The computation of $u \approx A^{-1}f$ using the this
approximate factorization is summarized as follows.
\begin{algo}
  Computation of $u \approx A^{-1}f$ using the approximate
  sweeping factorization of $A$ in the hierarchical matrix framework.
  \label{alg:solve}
\end{algo}
\begin{algorithmic}[1]
  \FOR{$m=1,\ldots,n$}
  \STATE $u_m = f_m$
  \ENDFOR
  \FOR{$m=1,\ldots,n-1$}
  \STATE $u_{m+1} = u_{m+1} - A_{m+1,m} \cdot \ms{hmatvec}(\wt{T}_m,u_m)$
    \ENDFOR
  \FOR{$m=1,\ldots,n$}
  \STATE $u_m = \ms{hmatvec}(\wt{T}_m,u_m)$
  \ENDFOR
  \FOR{$m=n-1,\ldots,1$}
  \STATE $u_m = u_m - \ms{hmatvec}(\wt{T}_m, A_{m,m+1} u_{m+1})$
  \ENDFOR
\end{algorithmic}
The cost of Algorithm \ref{alg:solve} is $O(R n^2 \log n) = O(R N \log
N)$. Algorithm \ref{alg:solve} defines an operator
\[
M: 
f = (f_1^t, f_2^t, \ldots, f_n^t)^t \rightarrow
u = (u_1^t, u_2^t, \ldots, u_n^t)^t,
\]
which is an approximate inverse of the discrete Helmholtz operator
$A$. When the threshold $\eps$ is set to be sufficiently small, $M$
can be used directly as the inverse of $H$ and $u$ can be taken as the
solution. However, a small $\eps$ value means that the rank $R$ of the
low-rank factorized form needs to be fairly large, thus resulting
large storage and computation cost. On the other hand, when $R$ is
kept rather small, Algorithms \ref{alg:setup} and \ref{alg:solve}
become highly efficiently both in terms of storage and time. Though
the resulting $M$ is not accurate enough as the inverse of $A$, it
serves as an excellent preconditioner. Therefore, we solve the
preconditioner system
\[
MA u = Mf
\]
using iterative solvers such as GMRES and TFQMR
\cite{Saad:03,SaadSchultz:86}. Since the cost of applying $M$ to any
vector is $O(R N \log N)$, the total cost of the iterative solver is
$O(N_I R N \log N)$, where $N_I$ is the number of iterations. The
numerical results in Section \ref{sec:2Dnum} demonstrate that $N_I$ is
in practice very small, thus resulting an algorithm for almost linear
complexity.

Theorem \ref{thm:lowrank} shows that in the constant coefficient case
the hierarchical matrix representation of $T_m$ is
accurate. Therefore, the preconditioner $M$ well approximates the
inverse of $A$ and the number of iterations $N_I$ is expected to be
small. The numerical results in Section \ref{sec:2Dnum} demonstrates
that $N_I$ is also small for general velocity field such as converging
lens, wave guides, and random media.  Here we provide a heuristic
explanation for this phenomena. For the variable coefficient case, the
numerical rank of the off-diagonal blocks of $T_m$ can potentially
increase mainly due to the {\em turning rays}, i.e., the rays that
leave the $m$-th layer downward, travel horizontally in $x_1$
direction, and come upward back to the $m$-th layer. The interactions
related to turning rays are difficult to capture in the hierarchical
matrix representation of $T_m$ if $R$ is small. However, the iterative
solver addresses this interaction in several steps as follows: the
downward part of the ray is processed by a first few sweeps, the
horizontal part is then captured by the $T_m$ matrix of the next
sweep, and finally the upward part of the ray is processed by a couple
of extra sweeps.

In the presentation of the sweeping preconditioner, we choose the
sweeping direction to be in the positive direction of the $x_2$
axis. It is clear that sweeping along either one of the other three
directions also gives a slightly different sweeping preconditioner. Due
to the variations in the velocity field and, more precisely, the
existence of the turning rays, a carefully selected sweeping direction
can often result significantly fewer number of GMRES iterations than
the other directions do. We will give one numerical example to
demonstrate this phenomenon in Section \ref{sec:2Dnum}.

\subsection{Other boundary conditions}
\label{sec:2Dpreother}

So far, we discuss the case with Sommerfeld boundary condition
specified over the whole boundary. From the above discussion, it is
clear that the success of the preconditioner only relies on the fact
that $S_m$ and $T_m$ are compressible. For many other boundary
conditions, the matrices $S_m$ and $T_m$ also have this property, as
long as the Helmholtz problem is not close to resonance. Here, we
mention three representative examples.

\begin{figure}[h!]
  \begin{center}
    \includegraphics{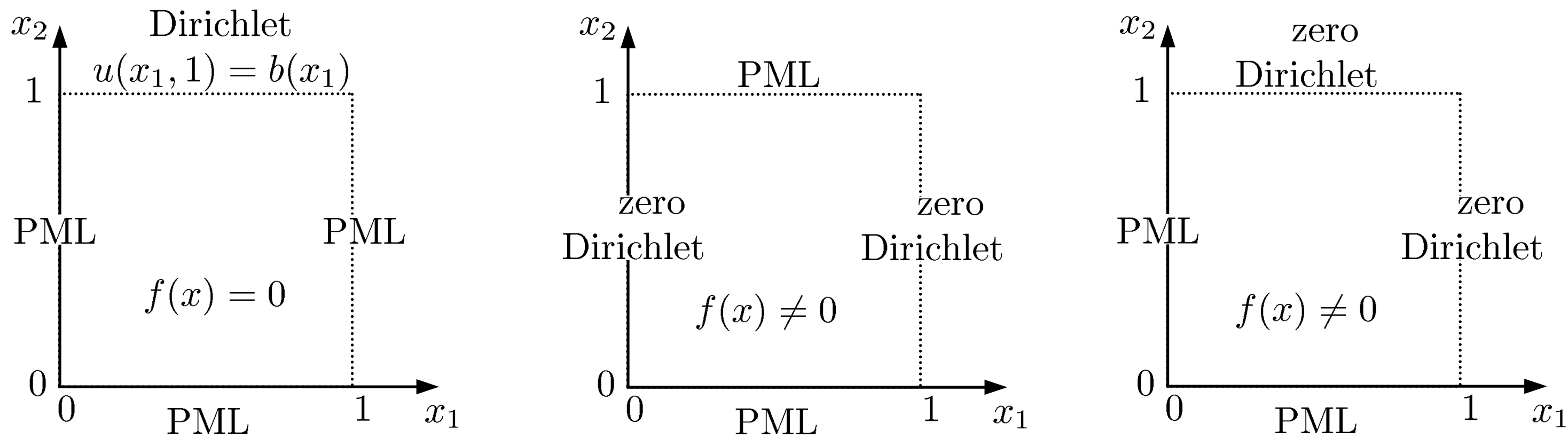}
  \end{center}
  \caption{Mixed boundary conditions. Left: Depth extrapolation
    problem in seismology. Middle and right: Problems with partly zero
    Dirichlet boundary condition and non-zero $f(x)$. }
  \label{fig:other}
\end{figure}

In the first example (see Figure \ref{fig:other} (left)), the PML
boundary condition at $x_2=1$ is replaced with non-trivial Dirichlet
boundary condition $u(x_1,1) = b(x_1)$ and $f$ is equal to zero. This
corresponds to the depth extrapolation problem
\cite{Biondi:06,Margrave:99} in reflection seismology. The proposed
algorithm proceeds exactly the same and the only modification is that
the boundary condition $b(x_1)$ is transformed into an appropriate
forcing term at last layer of unknowns (i.e., the index set $\P_n$).

In the second example, the zero boundary condition is mixed with the
PML condition. In Figure \ref{fig:other} (middle)) the zero Dirichlet
boundary condition is specified on $x_1=0$ and $x_1=1$. The matrix
$T_m$ then corresponds the restriction (to an edge) of the Green's
function of the discrete Helmholtz operator in a half strip. By using
the imaging method also in the $x_1$ direction, one can show that the
rank of the off-diagonal blocks is bounded by $O(\log\omega
|\log\eps|^2)$ with a slightly larger constant due to the mirror
images. In Figure \ref{fig:other} (right)), the zero Dirichlet
boundary condition is specified on $x_1=1$ and $x_2=1$. $T_m$
corresponds to the restriction of the Green's function of the discrete
Helmholtz operator in a quadrant in this case.

Finally, the PML boundary condition is by no means the only
approximation to the Sommerfeld condition. As the essential
requirement is that the problem should not be close to resonance
(i.e., a wave packet escapes the domain without spending too much time
inside), the sweeping preconditioner should work with any reasonable
approximations to the Sommerfeld boundary condition such as absorbing
boundary conditions (ABCs) \cite{EngquistMajda:77,EngquistMajda:79}
and damping/sponge layers. We focus on the PML due to its simplicity,
its low non-physical reflections, and the symmetry of its discrete
system.

\section{Numerical Results in 2D}
\label{sec:2Dnum}

In this section, we present several numerical results to illustrate
the properties of the sweeping preconditioner described in Section
\ref{sec:2Dpre}. The implementation is done in C++ and the results in
this section are obtained on a computer with a 2.6GHz CPU. The GMRES
method is used as the iterative solver with relative residue tolerance
set to be $10^{-3}$.



\subsection{PML}
\label{sec:2DnumPML}

The examples in this section have the PML boundary condition specified
at all sides.

\paragraph{Dependence on $\omega$.}
First, we study how the sweeping preconditioner behaves when $\omega$
varies. Consider three velocity fields in the domain $(0,1)^2$:
\begin{enumerate}
\item The first velocity field is a converging lens with a Gaussian profile
  at the center of the domain (see Figure \ref{fig:2Dnumspeed}(a)).
\item The second velocity field is a vertical waveguide with Gaussian cross
  section (see Figure \ref{fig:2Dnumspeed}(b)).
\item The third velocity field has a random velocity field (see Figure
  \ref{fig:2Dnumspeed}(c)).
\end{enumerate}
For each velocity field, we test with two external forces $f(x)$.
\begin{enumerate}
\item The first external force $f(x)$ is a Gaussian point source
  located at $(x_1,x_2) = (0.5, 0.125)$. The response of this forcing
  term generates circular waves propagating at all directions. Due to
  the variations of the velocity field, the circular waves would bend,
  form caustics, and intersect.
\item The second external force $f(x)$ is a Gaussian wave packet with
  a wavelength comparable to the typical wavelength of the Helmholtz
  equation. This packet centers at $(x_1,x_2) = (0.125, 0.125)$ and
  points to the $(1,1)$ direction. The response of this forcing term
  generates a Gaussian beam initially pointing towards the $(1,1)$
  direction. Due to the variations of the velocity field, this
  Gaussian beam should bend and scatter.
\end{enumerate}

\begin{figure}[h!]
  \begin{center}
    \begin{tabular}{ccc}
      \includegraphics[height=1.6in]{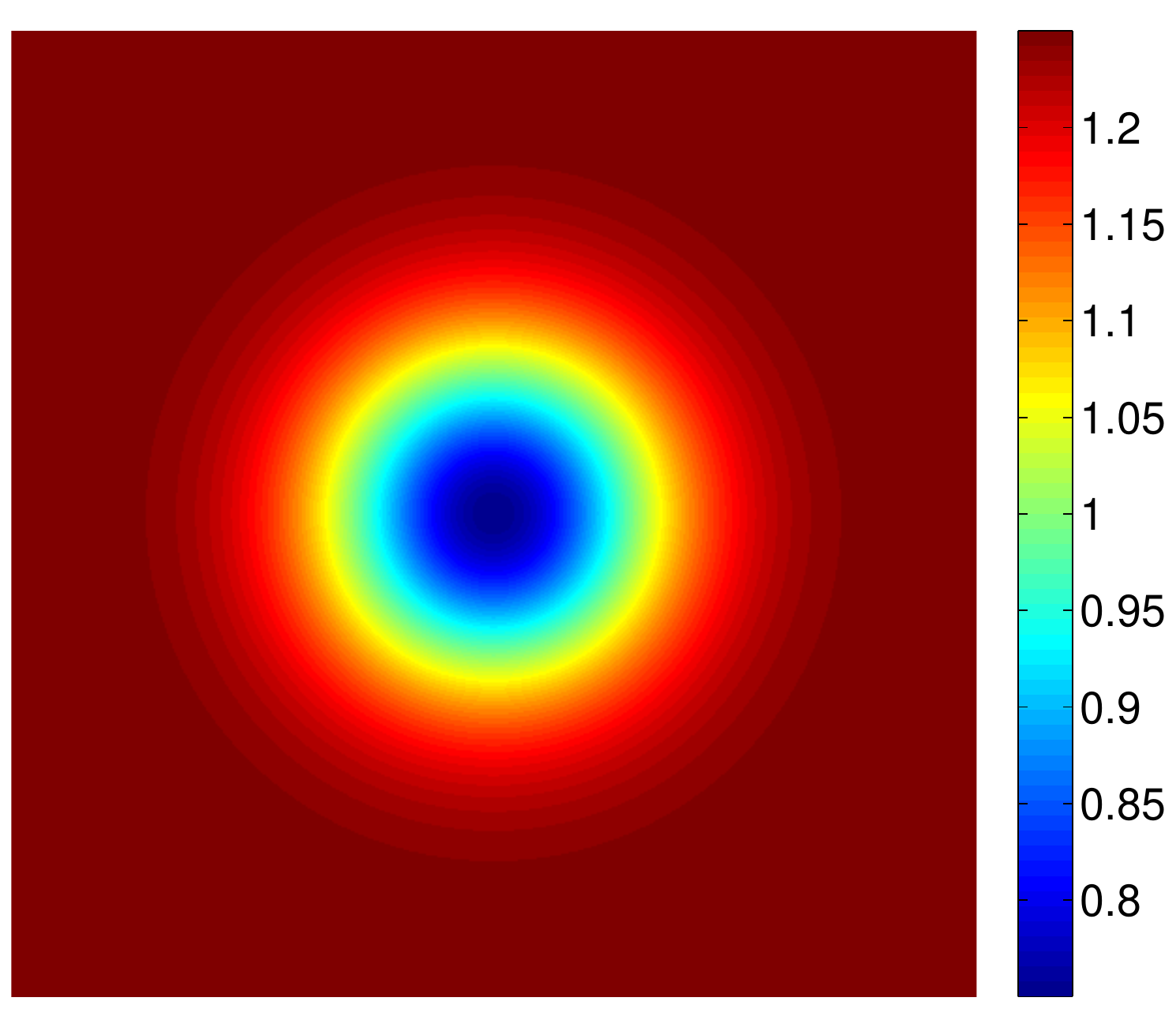}&\includegraphics[height=1.6in]{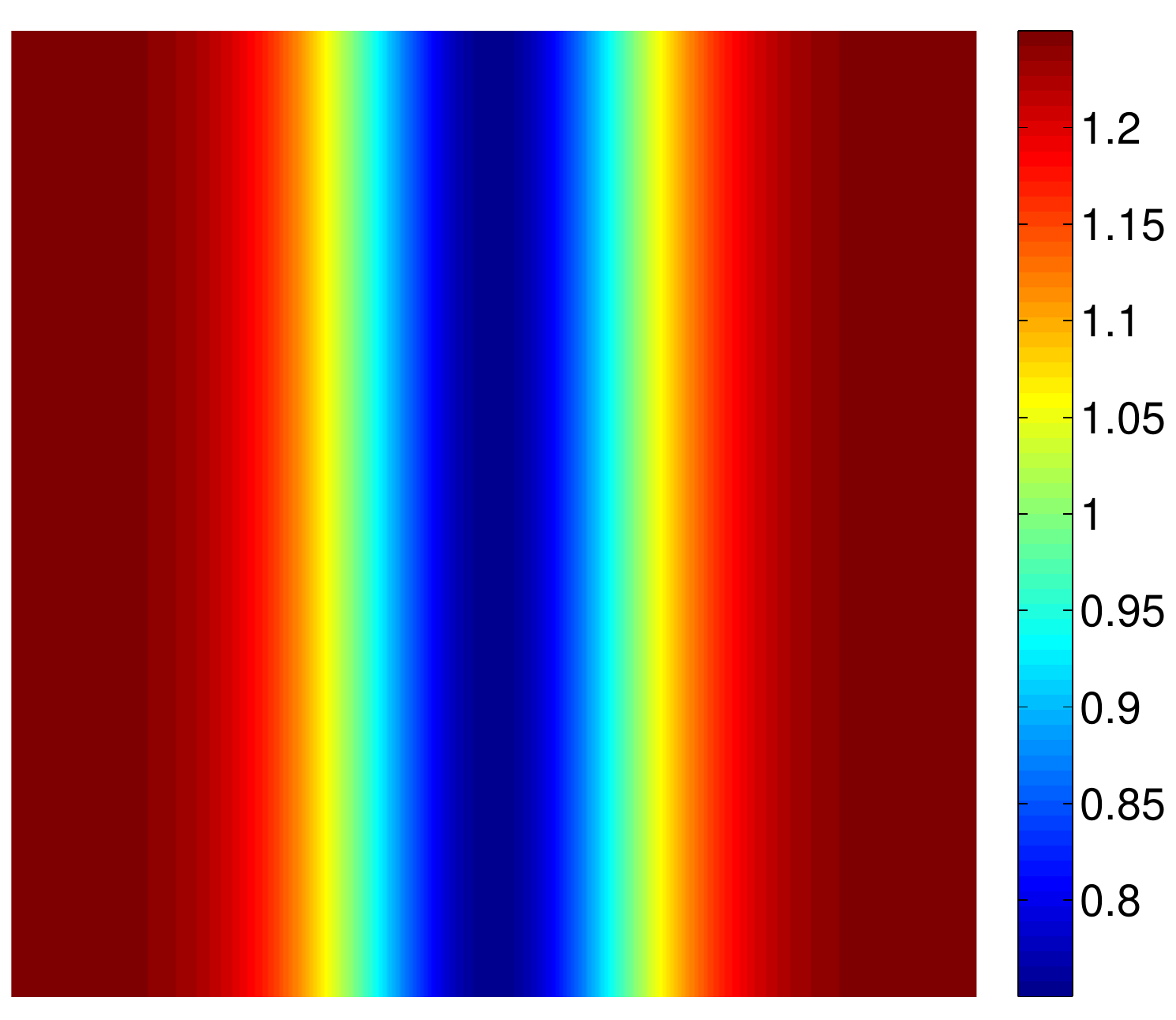}&\includegraphics[height=1.6in]{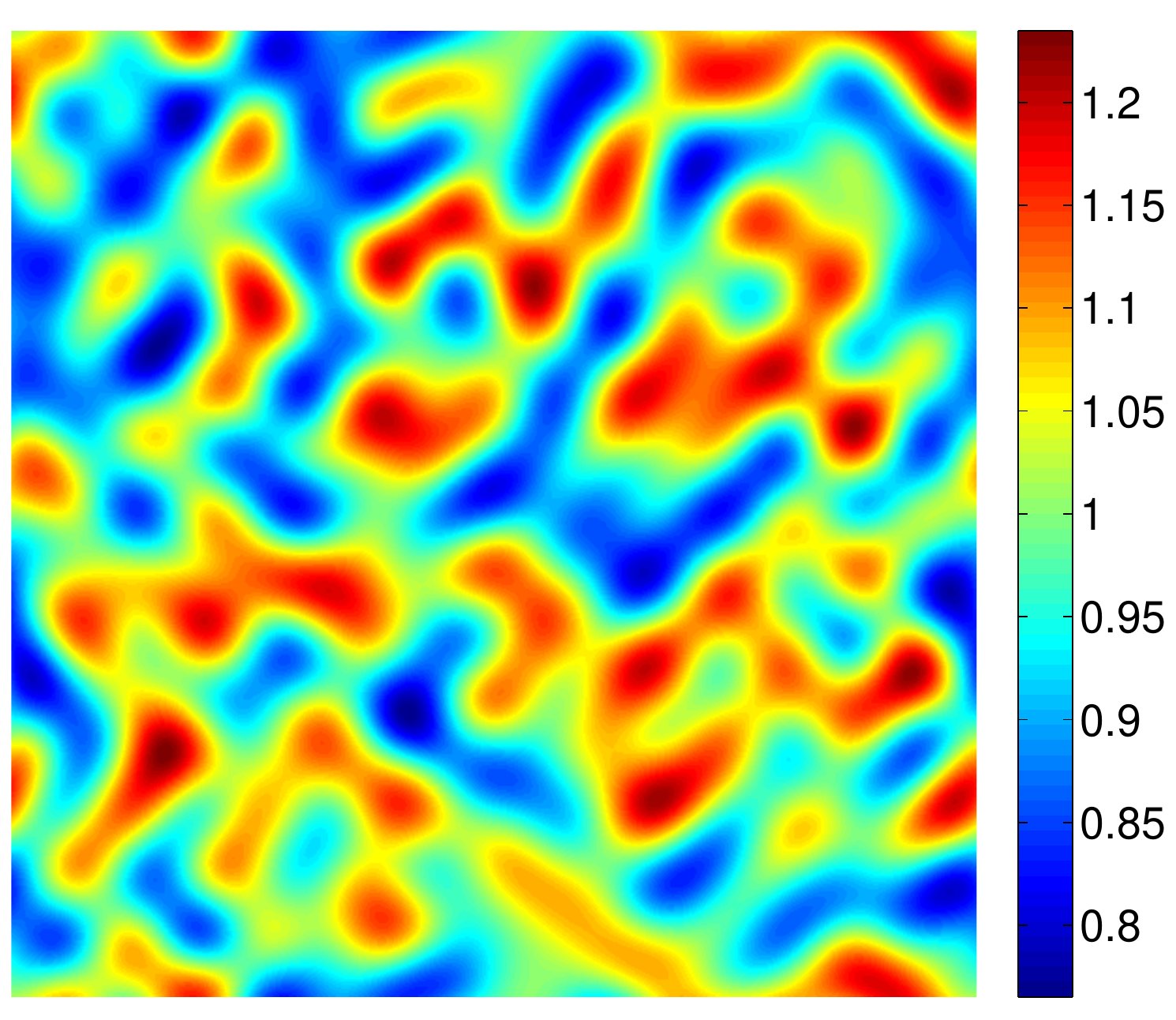}\\
      (a) & (b) & (c)
    \end{tabular}
  \end{center}
  \caption{Test velocity fields.}
  \label{fig:2Dnumspeed}
\end{figure}

For each velocity field, we perform tests for $\frac{\omega}{2\pi} =
16,32,\ldots,256$. In these tests, we discretize with $q=8$ points per
wavelength. Therefore, the number of points for each dimension is $n=8
\times \frac{\omega}{2\pi} = 128, 256, \ldots, 2048$. The strongly
admissible case is used in the implementation of the hierarchical
matrix representation. Recall that $R$ is the rank of the off-diagonal
blocks in the hierarchical matrix and we fix it to be a uniform
constant $2$. In all tests, the sweeping direction is bottom-up from
$x_2=0$ to $x_2=1$.

The results of the first velocity field are summarized in Table
\ref{tbl:2DPML1}. $T_{\text{setup}}$ denotes the time used to
construct the preconditioner in seconds. For each external force,
$N_{\text{iter}}$ is the number of iterations of the preconditioned
GMRES solver and $T_{\text{solve}}$ is the overall solution time. When
$n$ doubles and $N$ quadruples, the setup cost $T_{\text{setup}}$
increases by a factor of $5$ or $6$, which is consistent with the $O(N
\log^2 N)$ complexity of Algorithm \ref{alg:setup}. A remarkable
feature of the sweeping preconditioner is that the number of
iterations is extremely small. In fact, in all cases, the
preconditioned GMRES solver converges in less than 3 iterations. As a
result of the constant iteration number, the solution time increase by
a factor of 4 or 5 when $N$ quadruples, which is consistent with the
$O(N \log N)$ complexity of Algorithm \ref{alg:solve}. Finally, we
would like to point out that our algorithm is extremely efficient: for
a problem with $N = n^2 = 2048^2$ unknowns, the solution time is only
about $30$ seconds.

The results of the second and third velocity fields are summarized in
Tables \ref{tbl:2DPML2} and \ref{tbl:2DPML4}, respectively. The
behaviors of these tests are similar to the one of the first velocity
field. In all cases, the GMRES solver converges in less than 5
iterations when combined with the sweeping preconditioner.

\begin{table}[h!]
  \begin{center}
    \includegraphics[height=2.3in]{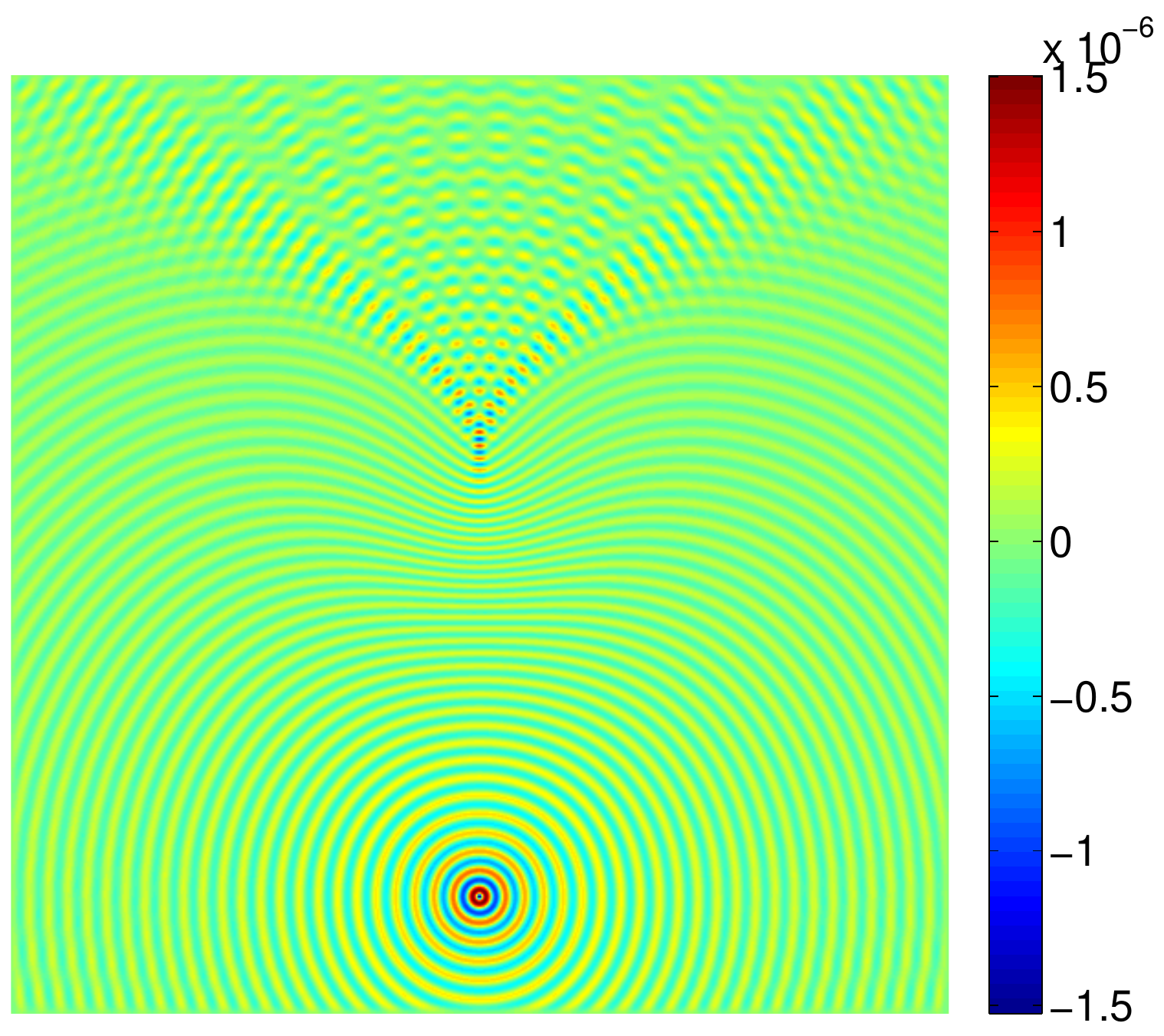}
    \includegraphics[height=2.3in]{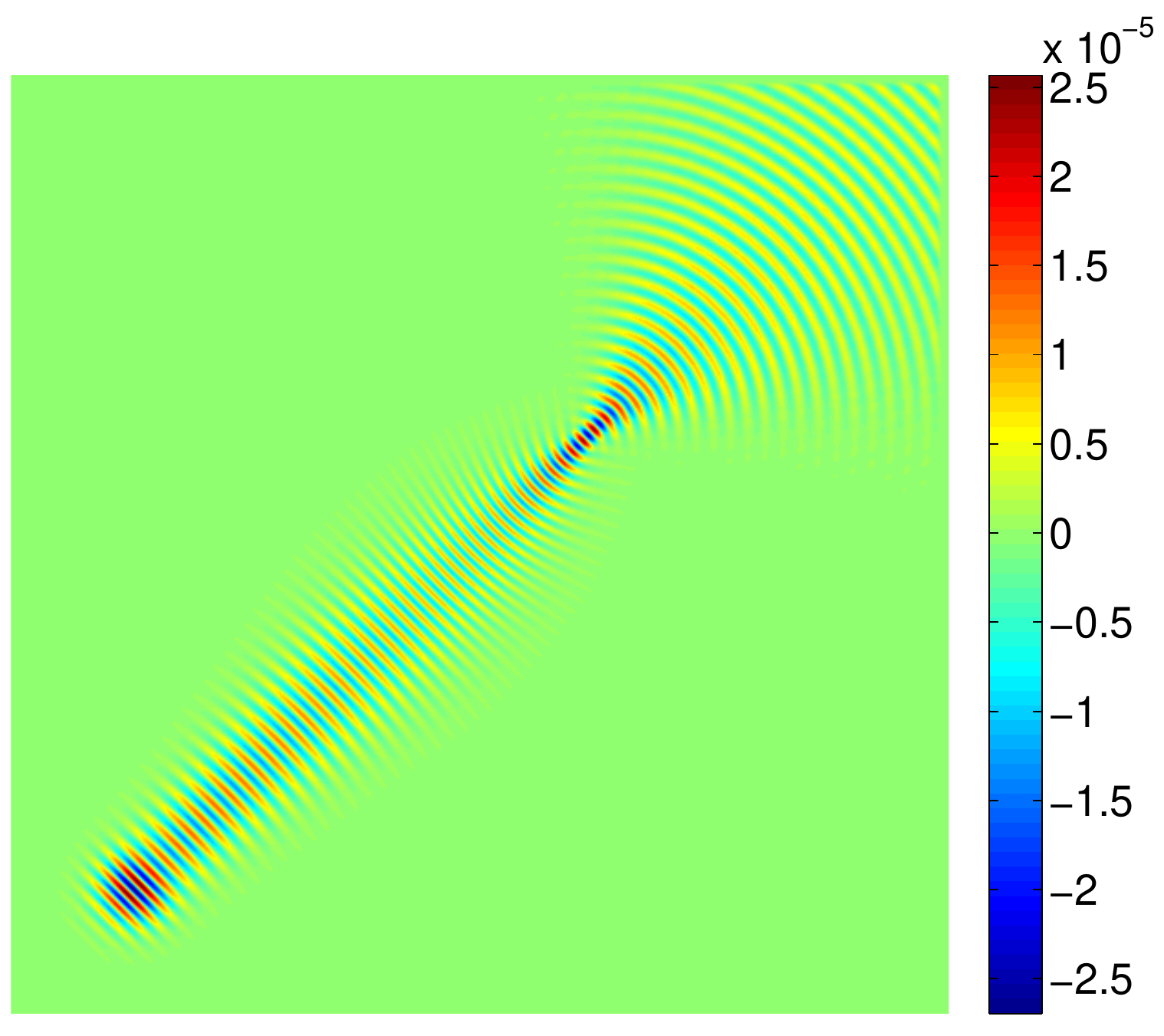}\\
    \begin{tabular}{|ccccc|cc|cc|}
      \hline
      \multicolumn{5}{|c|}{} & \multicolumn{2}{|c|}{Test 1} & \multicolumn{2}{|c|}{Test 2}\\
      \hline
      $\omega/(2\pi)$ & $q$ & $N=n^2$ & $R$ & $T_{\text{setup}}$ & $N_{\text{iter}}$ & $T_{\text{solve}}$ & $N_{\text{iter}}$ & $T_{\text{solve}}$\\
      \hline
      16  & 8 & $128^2$  & 2 & 6.50e-01 & 2 & 5.00e-02 & 2 & 5.00e-02 \\
      32  & 8 & $256^2$  & 2 & 5.05e+00 & 2 & 2.50e-01 & 2 & 2.50e-01 \\
      64  & 8 & $512^2$  & 2 & 3.44e+01 & 3 & 1.45e+00 & 3 & 1.42e+00 \\
      128 & 8 & $1024^2$ & 2 & 2.16e+02 & 3 & 7.37e+00 & 3 & 7.36e+00 \\
      256 & 8 & $2048^2$ & 2 & 1.24e+03 & 3 & 3.31e+01 & 3 & 3.28e+01 \\
      \hline
    \end{tabular}
  \end{center}
  \caption{Results of velocity field 1 for different $\omega$. Top: Solutions for two external forces with $\omega/(2\pi)=64$.
    Bottom: Results for different $\omega$.  }
  \label{tbl:2DPML1}
\end{table}

\begin{table}[h!]
  \begin{center}
    \includegraphics[height=2.3in]{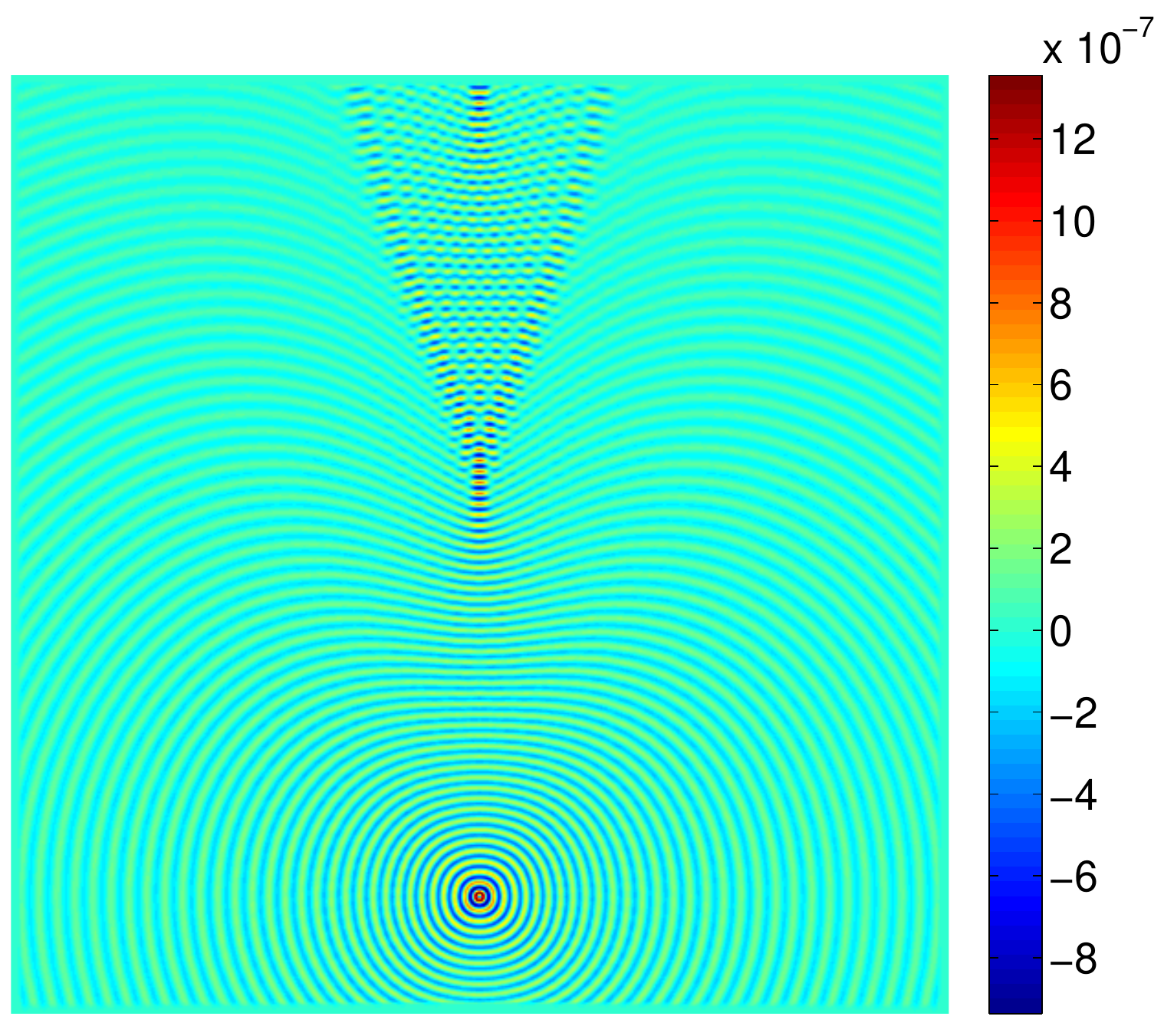}
    \includegraphics[height=2.3in]{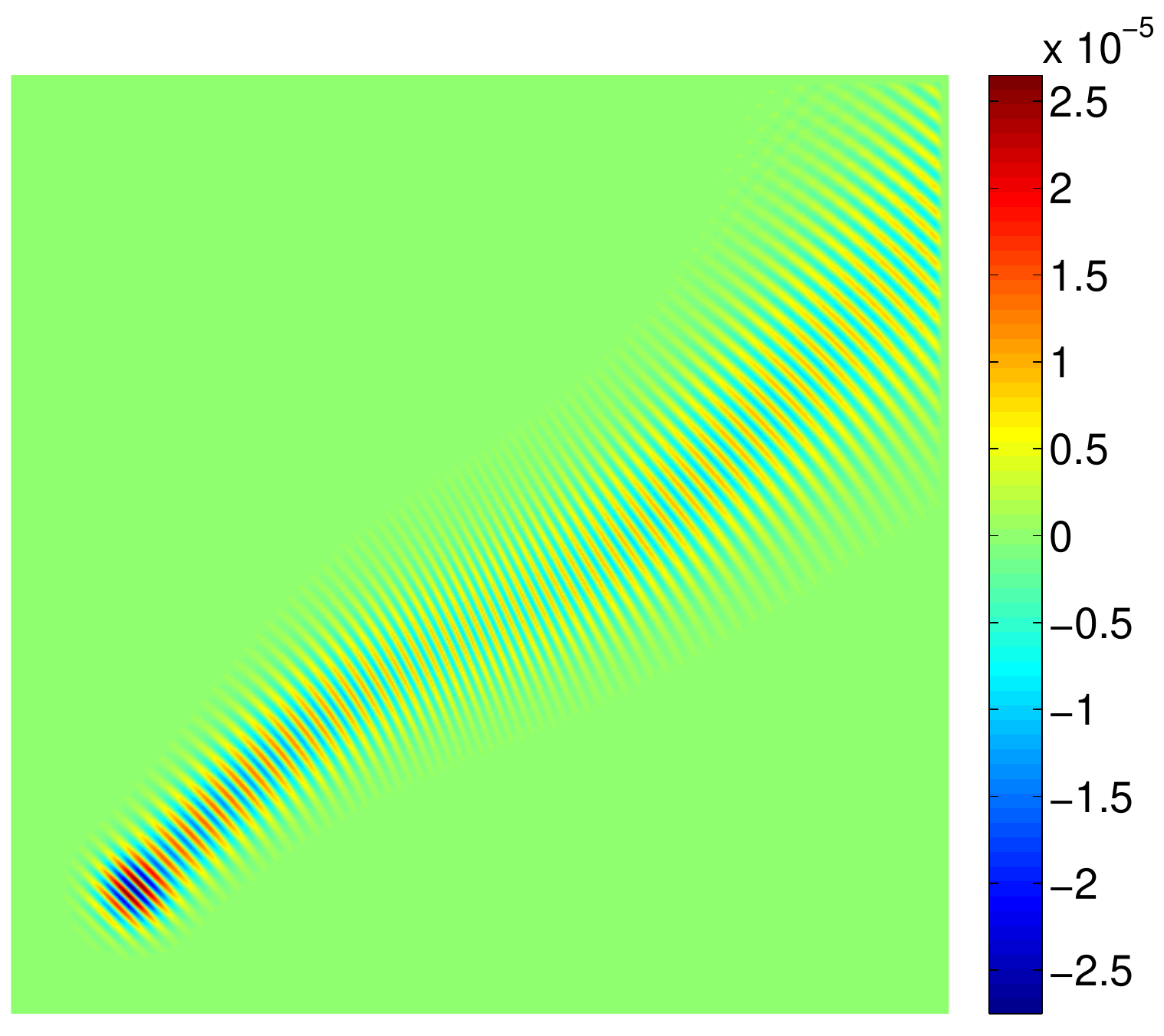}\\
    \begin{tabular}{|ccccc|cc|cc|}
      \hline
      \multicolumn{5}{|c|}{} & \multicolumn{2}{|c|}{Test 1} & \multicolumn{2}{|c|}{Test 2}\\
      \hline
      $\omega/(2\pi)$ & $q$ & $N=n^2$ & $R$ & $T_{\text{setup}}$ & $N_{\text{iter}}$ & $T_{\text{solve}}$ & $N_{\text{iter}}$ & $T_{\text{solve}}$\\
      \hline
      16  & 8 & $128^2$  & 2 & 6.70e-01 & 2 & 5.00e-02 & 2 & 6.00e-02 \\
      32  & 8 & $256^2$  & 2 & 4.97e+00 & 2 & 2.30e-01 & 2 & 2.30e-01 \\
      64  & 8 & $512^2$  & 2 & 3.43e+01 & 3 & 1.39e+00 & 3 & 1.39e+00 \\
      128 & 8 & $1024^2$ & 2 & 2.13e+02 & 4 & 8.43e+00 & 4 & 8.38e+00 \\
      256 & 8 & $2048^2$ & 2 & 1.25e+03 & 5 & 4.65e+01 & 4 & 3.93e+01 \\
      \hline
    \end{tabular}
  \end{center}
  \caption{Results of velocity field 2 for different $\omega$. Top: Solutions for two external forces with $\omega/(2\pi)=64$.
    Bottom: Results for different $\omega$.  }
  \label{tbl:2DPML2}
\end{table}

\begin{table}[h!]
  \begin{center}
    \includegraphics[height=2.3in]{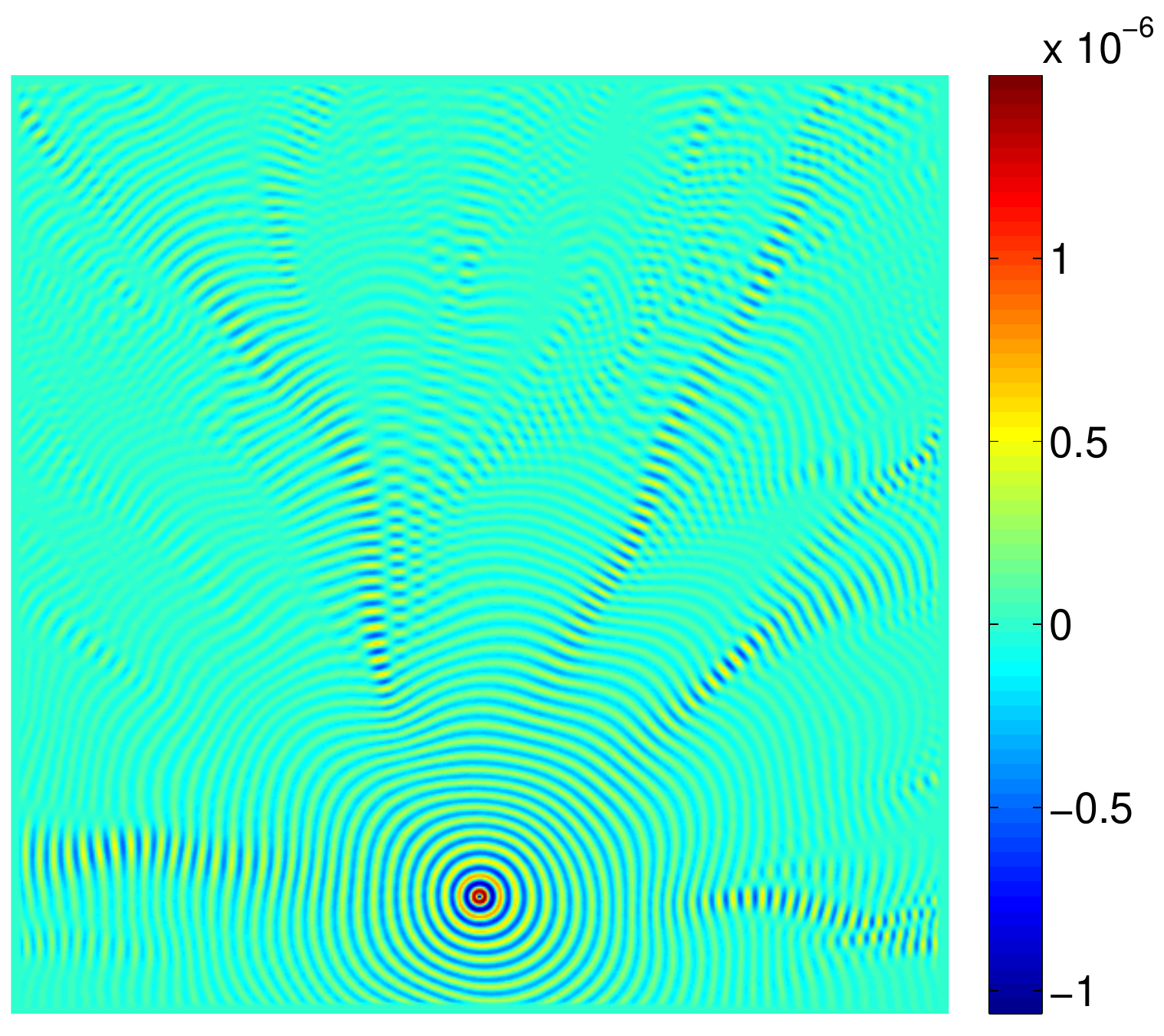}
    \includegraphics[height=2.3in]{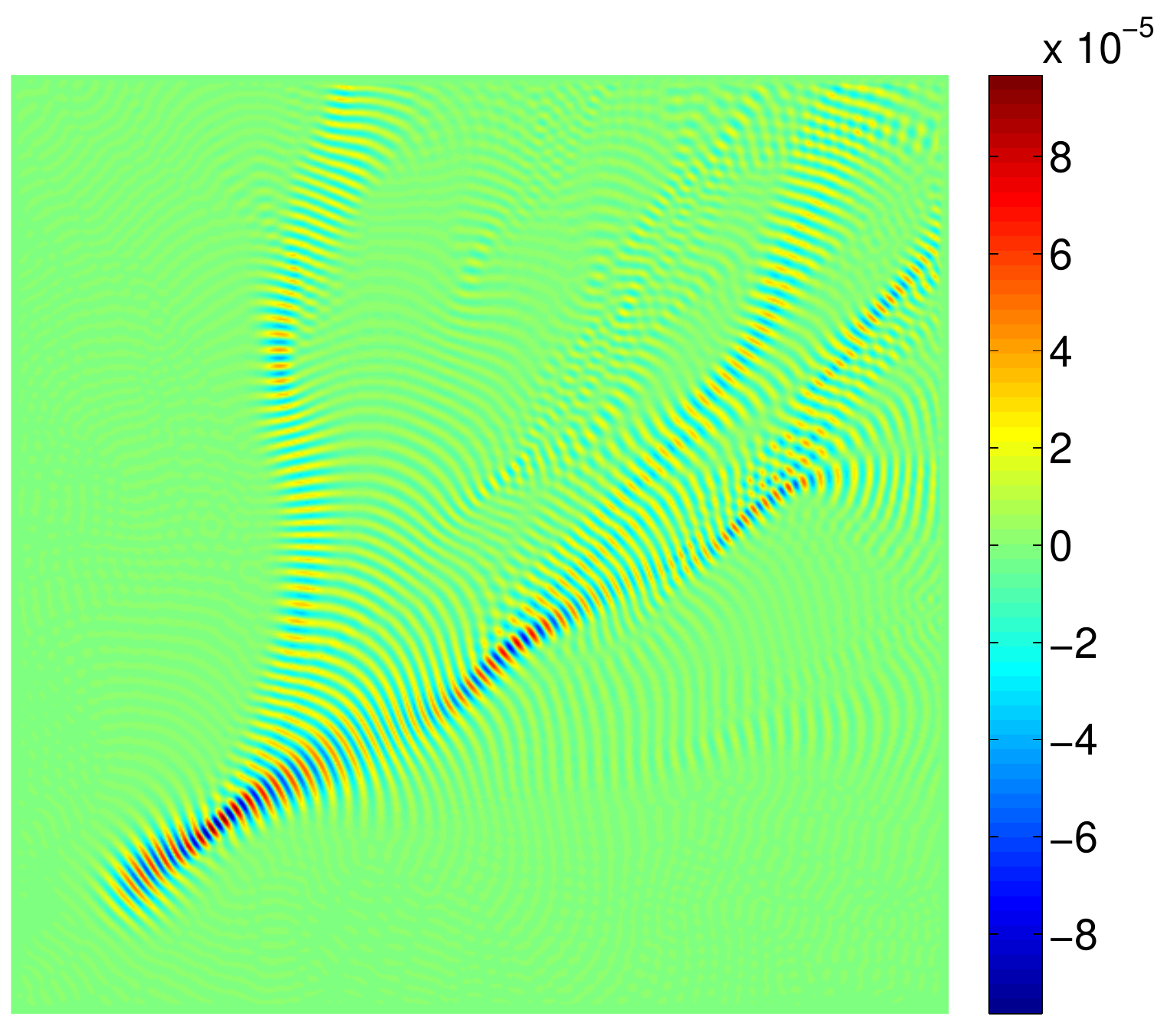}\\
    \begin{tabular}{|ccccc|cc|cc|}
      \hline
      \multicolumn{5}{|c|}{} & \multicolumn{2}{|c|}{Test 1} & \multicolumn{2}{|c|}{Test 2}\\
      \hline
      $\omega/(2\pi)$ & $q$ & $N=n^2$ & $R$ & $T_{\text{setup}}$ & $N_{\text{iter}}$ & $T_{\text{solve}}$ & $N_{\text{iter}}$ & $T_{\text{solve}}$\\
      \hline
      16  & 8 & $128^2$  & 2 & 6.50e-01 & 2 & 5.00e-02 & 2 & 5.00e-02 \\
      32  & 8 & $256^2$  & 2 & 5.10e+00 & 2 & 2.50e-01 & 3 & 3.00e-01 \\
      64  & 8 & $512^2$  & 2 & 3.48e+01 & 3 & 1.49e+00 & 3 & 1.48e+00 \\
      128 & 8 & $1024^2$ & 2 & 2.16e+02 & 4 & 8.99e+00 & 3 & 7.37e+00 \\
      256 & 8 & $2048^2$ & 2 & 1.26e+03 & 5 & 4.64e+01 & 3 & 3.25e+01 \\
      \hline
    \end{tabular}
  \end{center}
  \caption{Results of velocity field 3 for different $\omega$. Top: Solutions for two external forces with $\omega/(2\pi)=64$.
    Bottom: Results for different $\omega$.  }
  \label{tbl:2DPML4}
\end{table}

\paragraph{Dependence on $q$.} Secondly, we study how the sweeping
preconditioner behaves when the number of discretization points per
wavelength $q$ varies. Fix $\frac{\omega}{2\pi}$ at $32$ and let $q$
be $8,16,\ldots,64$. In the following tests, $R$ is again equal to
2. The sweeping direction is bottom-up from $x_2=0$ to $x_2=1$. The
test results for the three velocity fields are summarized in Tables
\ref{tbl:2DSPL1}, \ref{tbl:2DSPL2}, and \ref{tbl:2DSPL4},
respectively. These results show that the number of iterations remain
to be extremely small and the overall solution time scales roughly
linearly with respect to the number of unknowns.

\begin{table}[h!]
  \begin{center}
    \begin{tabular}{|ccccc|cc|cc|}
      \hline
      \multicolumn{5}{|c|}{} & \multicolumn{2}{|c|}{Test 1} & \multicolumn{2}{|c|}{Test 2}\\
      \hline
      $\omega/(2\pi)$ & $q$ & $N=n^2$ & $R$ & $T_{\text{setup}}$ & $N_{\text{iter}}$ & $T_{\text{solve}}$ & $N_{\text{iter}}$ & $T_{\text{solve}}$\\
      \hline
      32 & 8  & $256^2 $ & 2 & 4.93e+00 & 2 & 2.30e-01 & 2 & 2.30e-01 \\
      32 & 16 & $512^2 $ & 2 & 3.42e+01 & 2 & 1.11e+00 & 2 & 1.09e+00 \\
      32 & 32 & $1024^2$ & 2 & 2.13e+02 & 2 & 5.45e+00 & 2 & 5.45e+00 \\
      32 & 64 & $2048^2$ & 2 & 1.23e+03 & 2 & 2.50e+01 & 2 & 2.49e+01 \\
      \hline
    \end{tabular}
  \end{center}
  \caption{Results of velocity field 1 for different $q$.}
  \label{tbl:2DSPL1}
\end{table}

\begin{table}[h!]
  \begin{center}
    \begin{tabular}{|ccccc|cc|cc|}
      \hline
      \multicolumn{5}{|c|}{} & \multicolumn{2}{|c|}{Test 1} & \multicolumn{2}{|c|}{Test 2}\\
      \hline
      $\omega/(2\pi)$ & $q$ & $N=n^2$ & $R$ & $T_{\text{setup}}$ & $N_{\text{iter}}$ & $T_{\text{solve}}$ & $N_{\text{iter}}$ & $T_{\text{solve}}$\\
      \hline
      32 & 8  & $256^2$  & 2 & 4.93e+00 & 2 & 2.30e-01 & 2 & 2.30e-01 \\
      32 & 16 & $512^2$  & 2 & 3.42e+01 & 2 & 1.11e+00 & 2 & 1.09e+00 \\
      32 & 32 & $1024^2$ & 2 & 2.13e+02 & 2 & 5.45e+00 & 2 & 5.37e+00 \\
      32 & 64 & $2048^2$ & 2 & 1.23e+03 & 2 & 2.50e+01 & 2 & 2.49e+01 \\
      \hline
    \end{tabular}
  \end{center}
  \caption{Results of velocity field 2 for different $q$.}
  \label{tbl:2DSPL2}
\end{table}

\begin{table}[h!]
  \begin{center}
    \begin{tabular}{|ccccc|cc|cc|}
      \hline
      \multicolumn{5}{|c|}{} & \multicolumn{2}{|c|}{Test 1} & \multicolumn{2}{|c|}{Test 2}\\
      \hline
      $\omega/(2\pi)$ & $q$ & $N=n^2$ & $R$ & $T_{\text{setup}}$ & $N_{\text{iter}}$ & $T_{\text{solve}}$ & $N_{\text{iter}}$ & $T_{\text{solve}}$\\
      \hline
      32 & 8  & $256^2 $ & 2 & 5.13e+00 & 2 & 2.40e-01 & 3 & 3.10e-01 \\
      32 & 16 & $512^2 $ & 2 & 3.47e+01 & 2 & 1.21e+00 & 2 & 1.20e+00 \\
      32 & 32 & $1024^2$ & 2 & 2.14e+02 & 2 & 5.87e+00 & 2 & 5.84e+00 \\
      32 & 64 & $2048^2$ & 2 & 1.23e+03 & 2 & 2.52e+01 & 2 & 2.51e+01 \\
      \hline
    \end{tabular}
  \end{center}
  \caption{Results of velocity field 3 for different $q$.}
  \label{tbl:2DSPL4}
\end{table}

\paragraph{Dependence on sweeping direction.} Next, we study how the
sweeping directions affect the convergence rate of the GMRES
algorithm. In this example, the velocity field is given by $c(x_1,x_2)
= 1/2 + x_2$. The external force is a a narrow Gaussian point source
centered at $(x_1,x_2)=(0.125,0.5)$. Two sweeping directions are
tested here: the first one sweeps in the positive $x_2$ direction
while the second sweeps in the negative $x_2$ direction. For the first
sweeping direction, the matrix $T_m$ approximates the Green's function
of the lower half-space $(-\infty,\infty)\times (-\infty,mh)$. Since
the velocity field decreases in the negative $x_2$ direction, in a
geometric optics argument the rays emanating from $x_2=mh$ plane do
not travel back to the same plane. Therefore, we expect that the
numerical rank of the off-diagonal blocks of $T_m$ to be low, the
preconditioner to be quite accurate, and the number of iterations to
be small. The geometric theory of diffraction indicates that the
coupling between points on the plane $x_2=mh$ is via exponentially
decaying creeping rays and thus very weak.

For the second sweeping direction, the matrix $T_m$
approximates the Green's function of the upper half-space
$(-\infty,\infty)\times(1-mh,\infty)$. Since the velocity field
increases in the positive $x_2$ direction, the rays emanating from
$x_2=mh$ can shoot back to the same plane. As a result, the
hierarchical matrix representation of $T_m$ would incur larger error
for the same $R$ value and the number of iterations would become
larger.

Table \ref{tbl:2DDIR} reports the results of these two sweeping
directions for different $\omega$ values. As expected by the above
argument, the number of iterations for the first sweeping
preconditioner (in the positive $x_2$ direction) remains very small
while the number of iterations for the second one (in the negative
$x_2$ direction) increases slightly with $N$.

\begin{table}[h!]
  \begin{center}
    \includegraphics[height=1.6in]{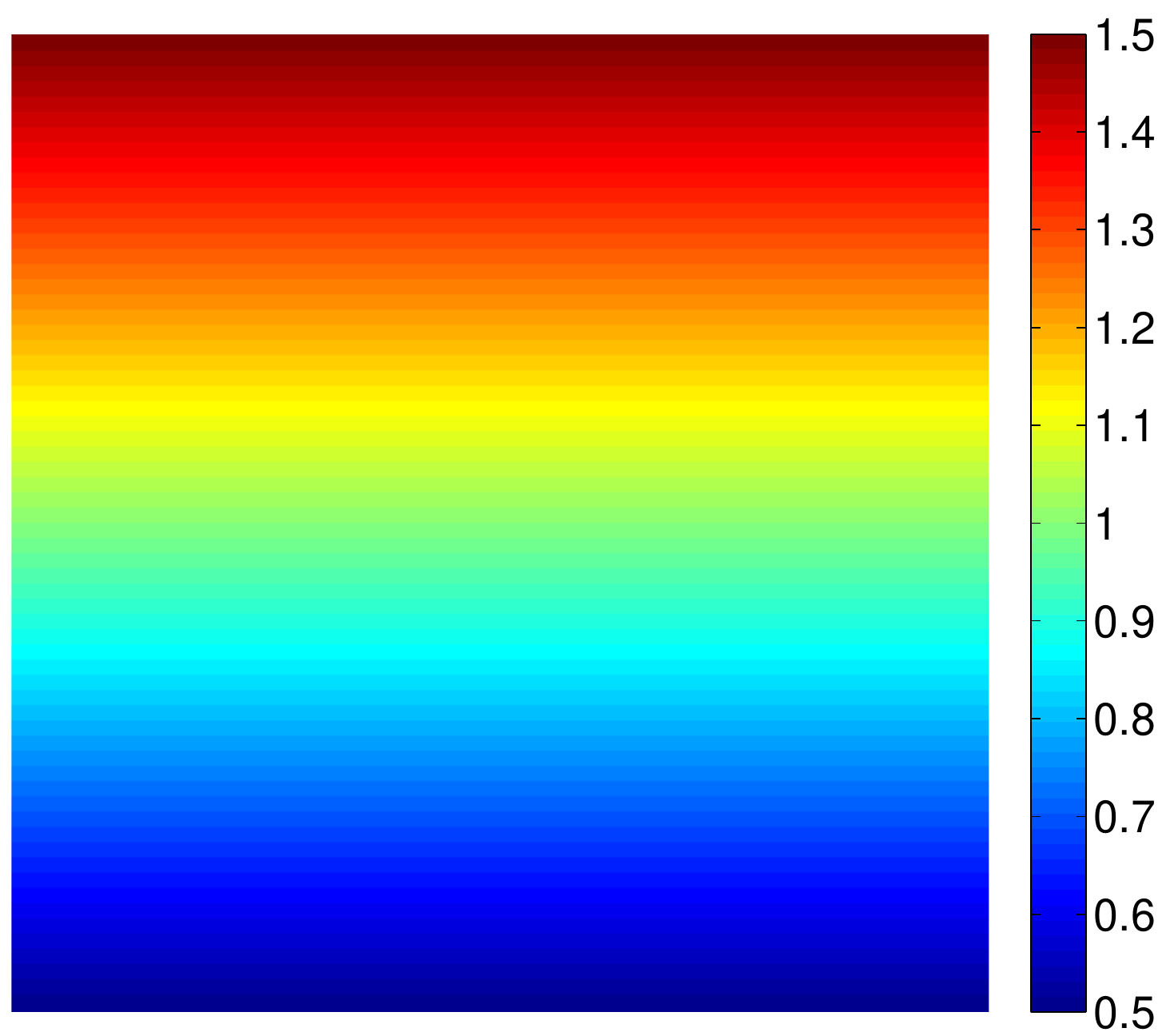}
    \includegraphics[height=2.3in]{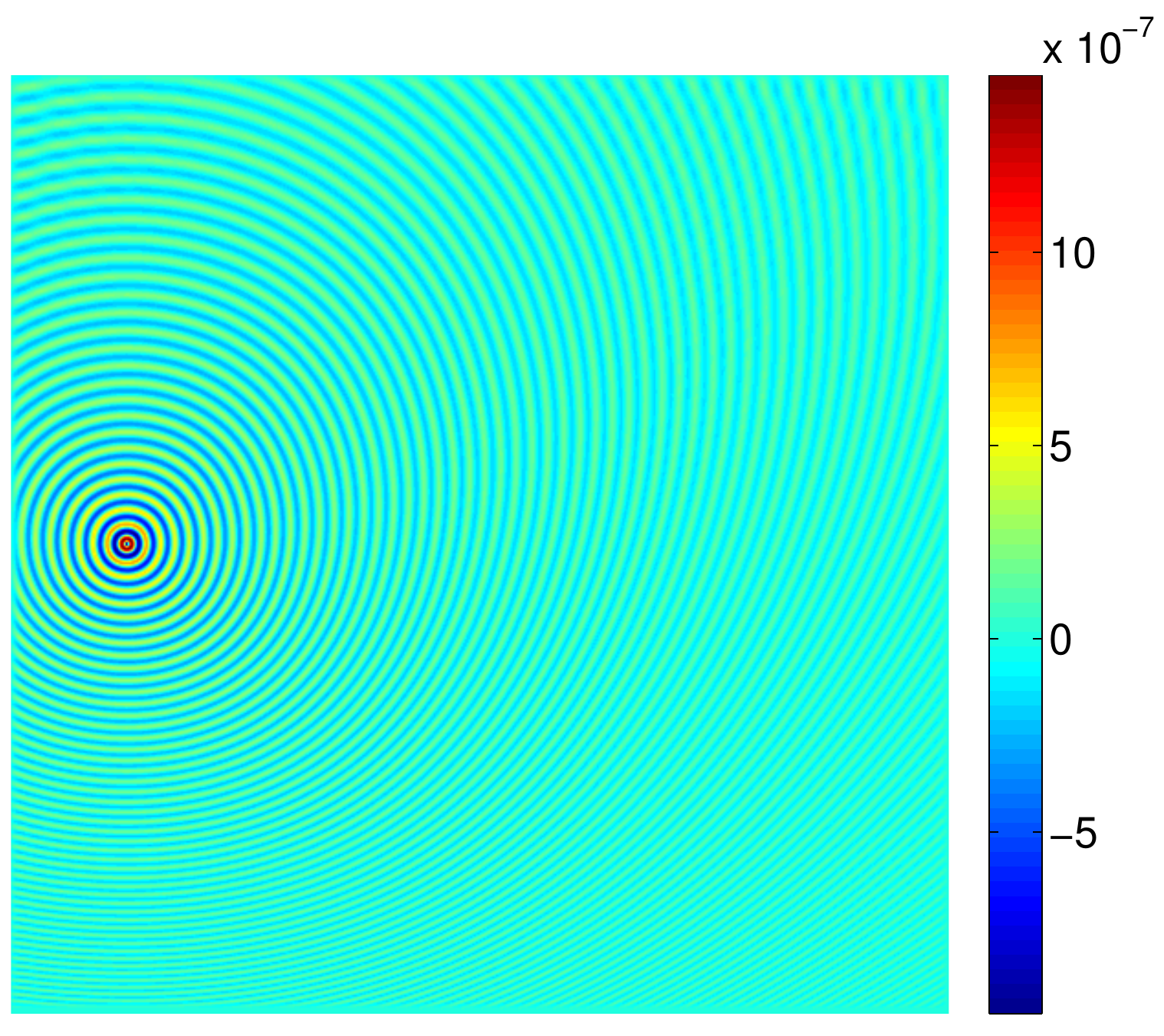}\\
    \begin{tabular}{|ccccc|cc|cc|}
      \hline
      \multicolumn{5}{|c|}{} & \multicolumn{2}{|c|}{Positive $x_2$} & \multicolumn{2}{|c|}{Negative $x_2$}\\
      \hline
      $\omega/(2\pi)$ & $q$ & $N=n^2$ & $R$ & $T_{\text{setup}}$ & $N_{\text{iter}}$ & $T_{\text{solve}}$ & $N_{\text{iter}}$ & $T_{\text{solve}}$\\
      \hline
      16  & 8 & $128^2$  & 2 & 6.80e-01 & 1 & 4.00e-02 & 2 & 5.00e-02 \\
      32  & 8 & $256^2$  & 2 & 4.95e+00 & 2 & 2.50e-01 & 3 & 3.00e-01 \\
      64  & 8 & $512^2$  & 2 & 3.40e+01 & 2 & 1.13e+00 & 4 & 1.86e+00 \\
      128 & 8 & $1024^2$ & 2 & 2.14e+02 & 2 & 5.82e+00 & 6 & 1.21e+01 \\
      256 & 8 & $2048^2$ & 2 & 1.25e+03 & 2 & 2.64e+01 & 6 & 5.49e+01 \\
      \hline
    \end{tabular}
  \end{center}
  \caption{Results of the positive and negative $x_2$ sweeping directions.
    Top row: the velocity field (left) and the solution for the external force (right).
    Bottom row: results for different $\omega$.
  }
  \label{tbl:2DDIR}
\end{table}

\subsection{Other boundary conditions}
\label{sec:2DnumOTH}

Here we report three examples with different boundary conditions. 

\paragraph{Depth extrapolation.} In the first example (see
Figure \ref{fig:other} (left)), the velocity field is a vertical wave
guide. We specify the Dirichlet boundary condition $u(x_1,1) = b(x_1)$
at the top edge $x_2=1$ and the PML at the other three edges. This is
the depth extrapolation problem in reflection seismology and we report
the results of two test cases:
\begin{enumerate}
\item $b(x_1)=1$. This corresponds to a plane wave entering the wave
  guide. The center part of the plane wave should start to bend and
  eventually form multiple caustics.
\item $b(x_1)=\exp\left(i\frac{\omega}{2}x_1\right)$. This corresponds
  to a slant wave entering the wave guide.
\end{enumerate}
The sweeping direction is bottom-up from $x_2=0$ to $x_2=1$. The
results are summarized in Table \ref{tbl:2DOTH1}. The running time
again follows closely the analytical estimate and the number of GMRES
iterations are bounded by $4$.

\begin{table}[h!]
  \begin{center}
    \includegraphics[height=2.3in]{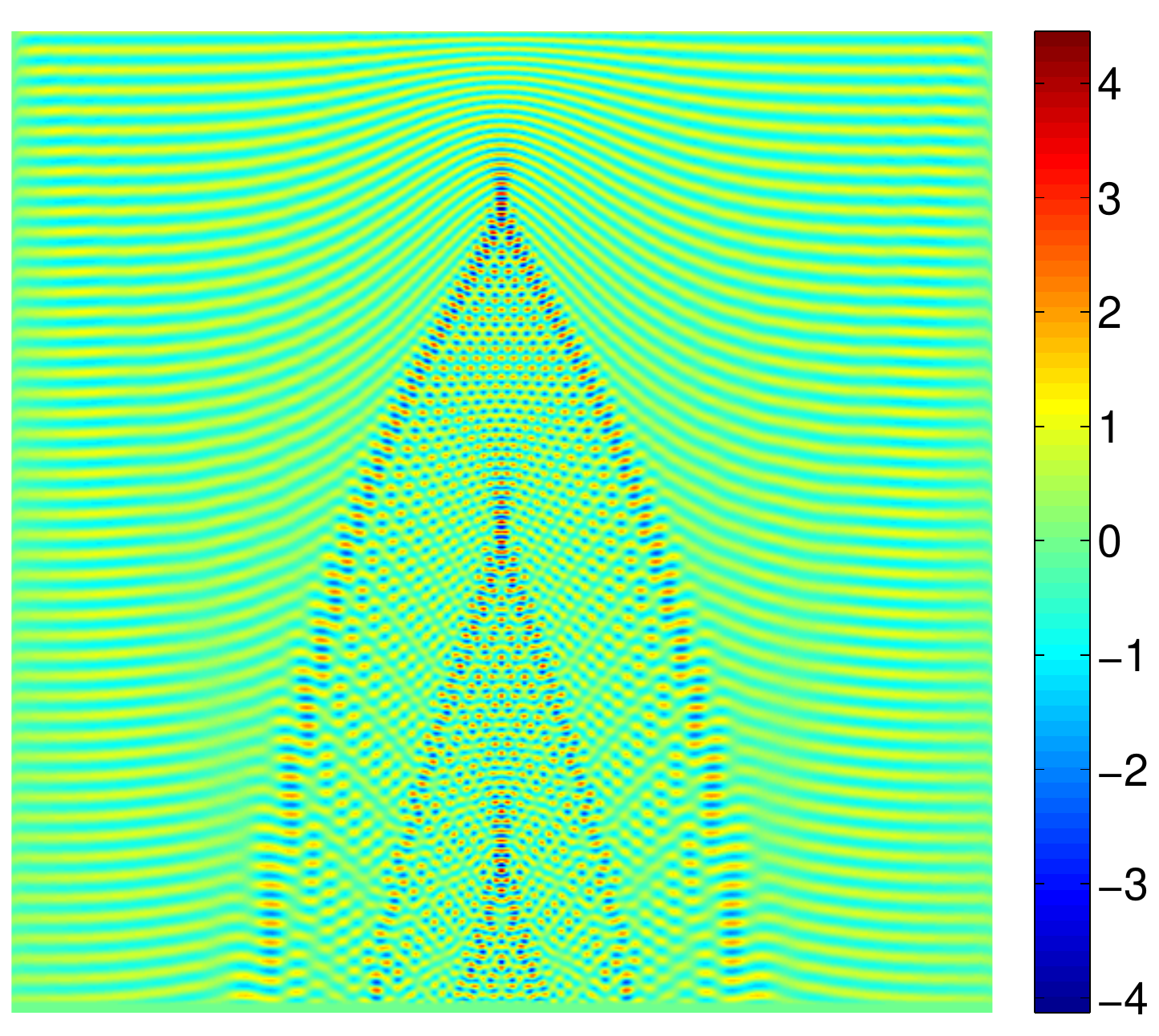}
    \includegraphics[height=2.3in]{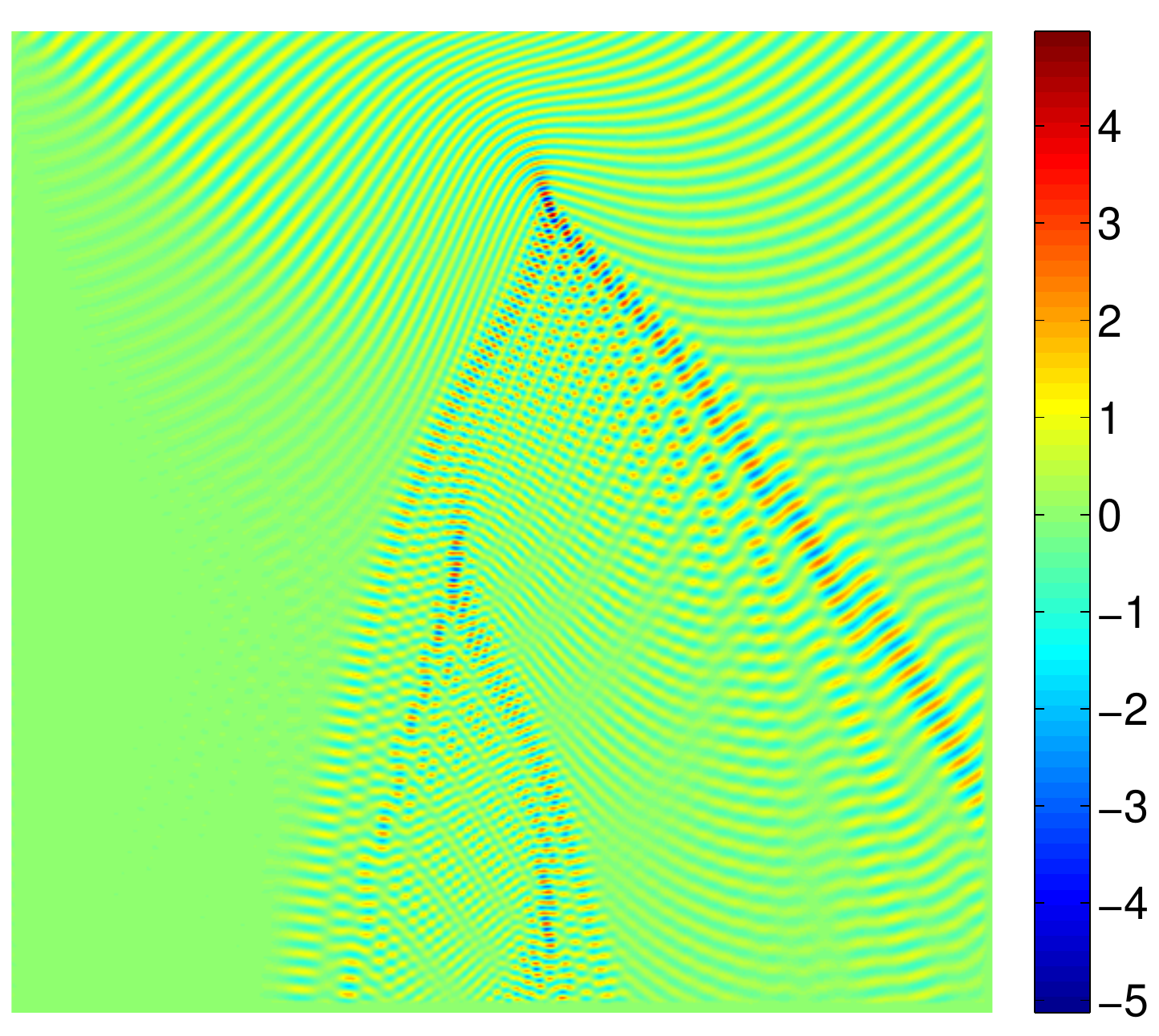}\\
    \begin{tabular}{|ccccc|cc|cc|}
      \hline
      \multicolumn{5}{|c|}{} & \multicolumn{2}{|c|}{Test 1} & \multicolumn{2}{|c|}{Test 2}\\
      \hline
      $\omega/(2\pi)$ & $q$ & $N=n^2$ & $R$ & $T_{\text{setup}}$ & $N_{\text{iter}}$ & $T_{\text{solve}}$ & $N_{\text{iter}}$ & $T_{\text{solve}}$\\
      \hline
      16  & 8 & $128^2$  & 2 & 6.60e-01 & 2 & 4.00e-02 & 2 & 4.00e-02 \\
      32  & 8 & $256^2$  & 2 & 5.07e+00 & 3 & 3.20e-01 & 3 & 3.00e-01 \\
      64  & 8 & $512^2$  & 2 & 3.45e+01 & 3 & 1.48e+00 & 3 & 1.46e+00 \\
      128 & 8 & $1024^2$ & 2 & 2.15e+02 & 3 & 7.29e+00 & 3 & 7.30e+00 \\
      256 & 8 & $2048^2$ & 2 & 1.25e+03 & 4 & 3.92e+01 & 4 & 3.94e+01 \\
      \hline
    \end{tabular}
  \end{center}
  \caption{Results of the depth stepping example for different $\omega$. 
    Top: Solutions for two test cases with $\omega/(2\pi)=64$.
    Bottom: Results for different $\omega$.  }
  \label{tbl:2DOTH1}
\end{table}

\paragraph{Mixed PML-Dirichlet boundary condition.}
In the second example, the velocity field $c(x)$ is equal to constant
one and perform two tests with mixed boundary conditions.
\begin{enumerate}
\item In the first test (see Figure \ref{fig:other} (middle)), we
  specify the zero Dirichlet boundary condition at $x_1=0$ and $x_1=$
  and the PML condition at the other two sides.  The external force
  $f(x)$ is a Gaussian wave packet with a wavelength comparable to the
  typical wavelength of the Helmholtz equation. This packet centers at
  $(x_1,x_2) = (0.5, 0.125)$ and points to the
  $(\cos(\pi/8),\sin(\pi/8))$ direction. The Gaussian beam generated
  by this forcing term should bounce back from the edge $x_1=1$ and
  then from the edge $x_1=0$.
\item In the second test (see Figure \ref{fig:other} (right)), we
  specify the zero Dirichlet boundary condition at $x_1=1$ and $x_2=1$
  and the PML condition at the other two sides. The external force
  $f(x)$ is a Gaussian wave packet with a wavelength comparable to the
  typical wavelength of the Helmholtz equation. This packet centers at
  $(x_1,x_2) = (0.5, 0.125)$ and points to the $(1,1)$ direction. The
  Gaussian beam generated by this forcing term should bounce back from
  the edge $x_1=1$ and then from the edge $x_2=1$.
\end{enumerate}
The sweeping direction is bottom-up from $x_2=0$ to $x_2=1$ and the
results of these tests are summarized in Table \ref{tbl:2DOTH2}.  The
running time again follows the analytical estimate. For the first test
case with zero Dirichlet boundary condition at $x_1=0$ and $x_1=1$,
due to the reason mentioned in Section \ref{sec:2Dpreother}, the rank
of the off-diagonal blocks of the Schur complement matrices are
slightly higher. Hence, with the same $R$ value the number of
iterations is expected to increase slightly. In all cases, the number
of GMRES iterations is bounded by 10.

\begin{table}[h!]
  \begin{center}
    \includegraphics[height=2.3in]{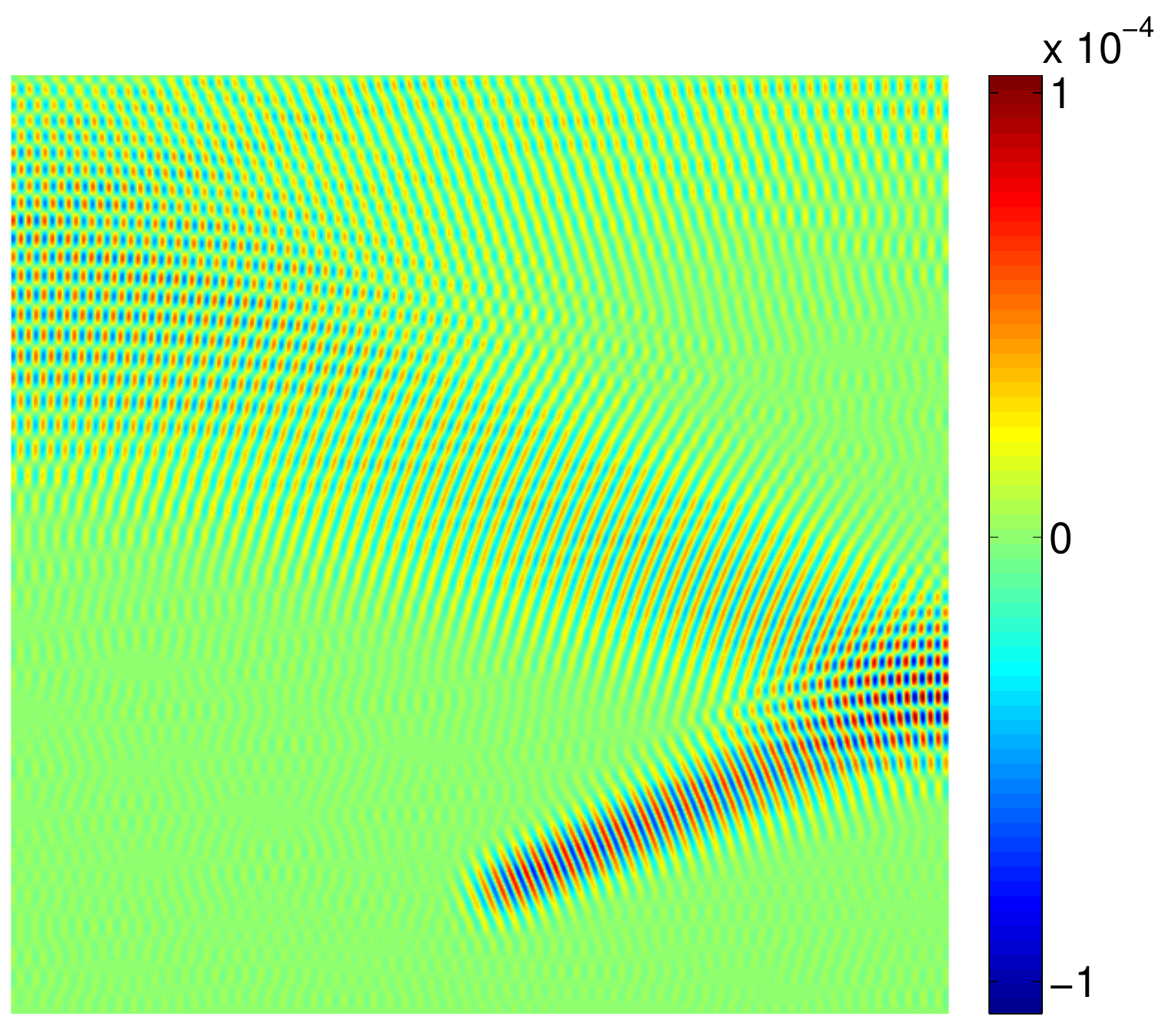}
    \includegraphics[height=2.3in]{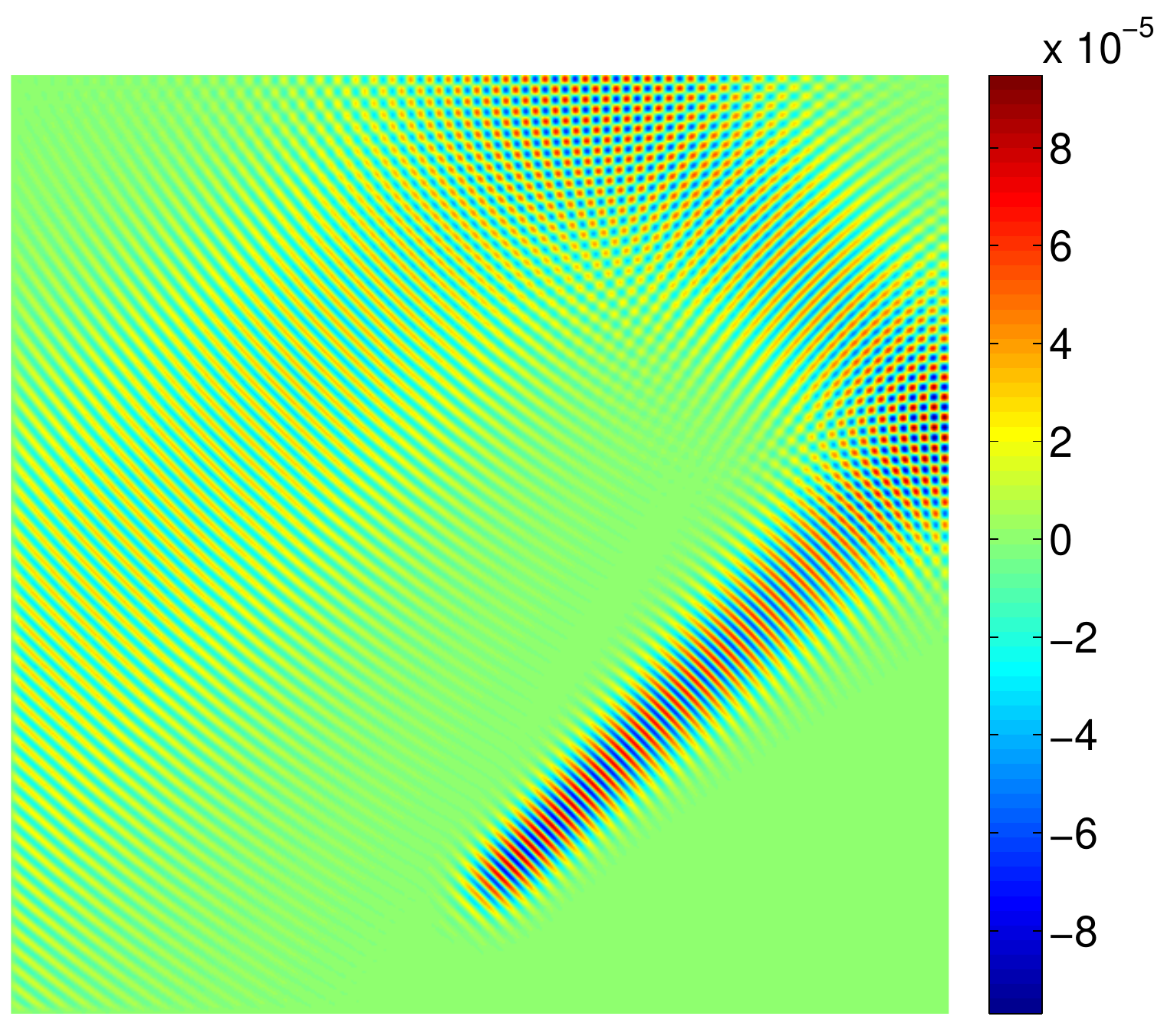}\\
    \begin{tabular}{|ccccc|cc|cc|}
      \hline
      \multicolumn{5}{|c|}{} & \multicolumn{2}{|c|}{Test 1} & \multicolumn{2}{|c|}{Test 2}\\
      \hline
      $\omega/(2\pi)$ & $q$ & $N=n^2$ & $R$ & $T_{\text{setup}}$ & $N_{\text{iter}}$ & $T_{\text{solve}}$ & $N_{\text{iter}}$ & $T_{\text{solve}}$\\
      \hline
      16  & 8 & $128^2$  & 2 & 6.80e-01 & 2 & 5.00e-02 & 2 & 5.00e-02 \\
      32  & 8 & $256^2$  & 2 & 5.00e+00 & 3 & 3.10e-01 & 2 & 2.50e-01 \\
      64  & 8 & $512^2$  & 2 & 3.47e+01 & 6 & 2.70e+00 & 2 & 1.27e+00 \\
      128 & 8 & $1024^2$ & 2 & 2.16e+02 & 9 & 1.80e+01 & 2 & 6.19e+00 \\
      256 & 8 & $2048^2$ & 2 & 1.26e+03 & 10& 8.52e+01 & 2 & 2.69e+01 \\
      \hline
    \end{tabular}
  \end{center}
  \caption{Results of the mixed boundary condition example for different $\omega$. 
    Top: Solutions for two test cases with $\omega/(2\pi)=64$.
    Bottom: Results for different $\omega$. }
  \label{tbl:2DOTH2}
\end{table}

\paragraph{Absorbing boundary condition.} In the last example, we
replace the PML with the second order absorbing boundary condition
(ABC). The velocity field $c(x)$ is taken to be one and we perform
tests with two different external forces, which are similar to the
ones given at the beginning of Section \ref{sec:2DnumPML}. However,
since the low order ABCs generate more non-physical reflections at the
domain boundaries, we move the support of these external forces closer
to the center of the computational domain.
\begin{enumerate}
\item The first external force $f(x)$ is a Gaussian point source
  located at $(x_1,x_2) = (0.5, 0.25)$.
\item The second external force $f(x)$ is a Gaussian wave packet with
  a wavelength comparable to the typical wavelength of the Helmholtz
  equation. This packet centers at $(x_1,x_2) = (0.25, 0.25)$ and
  points to the $(1,1)$ direction.
\end{enumerate}
Due to the same non-physical reflections, the discrete Green's
function associated with a low order ABC often has off-diagonal blocks
with higher numerical ranks compared to the discrete Green's function
associated with the PML. As a result, we let $R$ increase slightly
with $\omega$. The sweeping direction is bottom-up from $x_2=0$ to
$x_2=1$ and the results are summarized in Table \ref{tbl:2DABC0}. The
setup time grows slightly higher than linear complexity due to the
increase of $R$. The number of iteration increases roughly
logarithmically with respect to $\omega$. In all cases, the number of
GMRES iterations is bounded by 13. Overall the results for the ABC
compares slightly worse than the ones of the PML, suggesting that, in
order for the sweeping preconditioner to work well, it is essential to
minimize non-physical reflections at the domain boundary.

\begin{table}[h!]
  \begin{center}
    \includegraphics[height=2.3in]{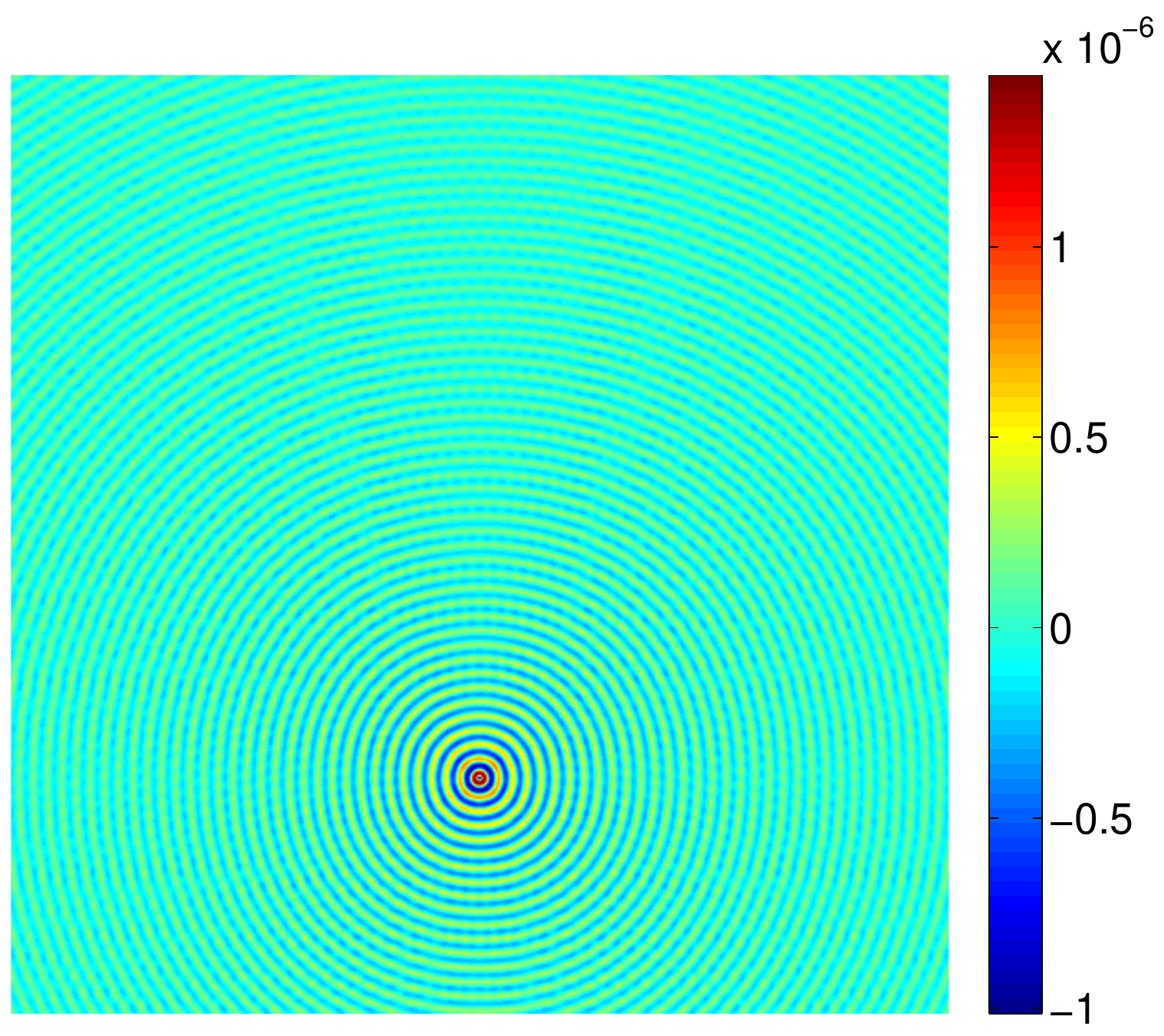}
    \includegraphics[height=2.3in]{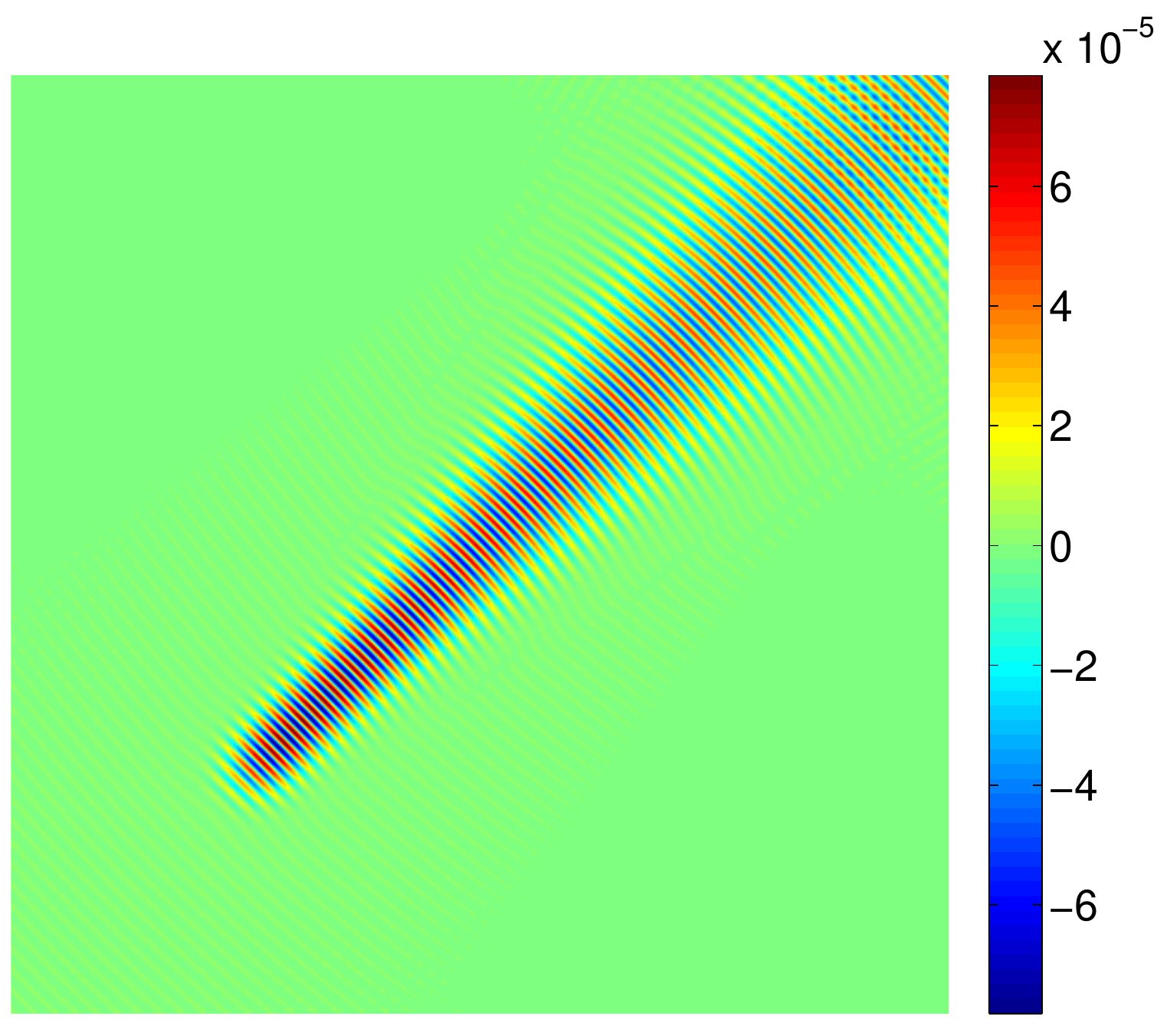}\\
    \begin{tabular}{|ccccc|cc|cc|}
      \hline
      \multicolumn{5}{|c|}{} & \multicolumn{2}{|c|}{Test 1} & \multicolumn{2}{|c|}{Test 2}\\
      \hline
      $\omega/(2\pi)$ & $q$ & $N=n^2$ & $R$ & $T_{\text{setup}}$ & $N_{\text{iter}}$ & $T_{\text{solve}}$ & $N_{\text{iter}}$ & $T_{\text{solve}}$\\
      \hline
      16  & 8 & $128^2$  & 2 & 6.70e-01 & 7 & 1.30e-01 & 6 & 1.50e-01\\
      32  & 8 & $256^2$  & 2 & 4.98e+00 & 7 & 4.80e-01 & 6 & 4.30e-01\\
      64  & 8 & $512^2$  & 3 & 5.10e+01 & 8 & 3.16e+00 & 6 & 2.42e+00\\
      128 & 8 & $1024^2$ & 4 & 4.65e+02 & 10& 2.06e+01 & 6 & 1.33e+01\\
      256 & 8 & $2048^2$ & 5 & 3.84e+03 & 13& 1.59e+01 & 6 & 8.49e+01\\
      \hline
    \end{tabular}
  \end{center}
  \caption{Results of the absorbing boundary condition (ABC) test for different $\omega$. 
    Top: Solutions for two test cases with $\omega/(2\pi)=64$.
    Bottom: Results for different $\omega$. }
  \label{tbl:2DABC0}
\end{table}

\section{Preconditioner in 3D}
\label{sec:3Dpre}

\subsection{Discretization}
\label{sec:3Dpredisc}

The computational domain is $D=(0,1)^3$. Using the same $\sigma(t)$
defined in \eqref{eq:sigma}, we define
\[
s_1(x_1) = \left( 1+i \frac{\sigma(x_1)}{\omega} \right)^{-1},\quad
s_2(x_2) = \left( 1+i \frac{\sigma(x_2)}{\omega} \right)^{-1},\quad
s_3(x_3) = \left( 1+i \frac{\sigma(x_3)}{\omega} \right)^{-1}.
\]
The PML replaces $\p_1$ with $s_1(x_1)\p_1$, $\p_2$ with
$s_2(x_2)\p_2$, and $\p_3$ with $s_3(x_3)\p_3$. This effectively
provides a damping layer of width $\eta$ near the boundary of
$D=(0,1)^3$. The resulting equation is
\begin{eqnarray*}
  \left( (s_1\p_1)(s_1\p_1) + (s_2\p_2)(s_2\p_2) + (s_3\p_3)(s_3\p_3) + \frac{\omega^2}{c^2(x)} \right) u = f && x\in D=[0,1]^3,\\
  u = 0 && x \in \p D.
\end{eqnarray*}
Without loss of generality, we assume that $f(x)$ is supported inside
$[\eta,1-\eta]^3$ (away from the PML). Dividing the above
equation by $s_1 s_2 s_3$ results
\[
\left( \p_1\left(\frac{s_1}{s_2s_3} \p_1\right) + \p_2\left(\frac{s_2}{s_1s_3} \p_2\right) + 
  \p_3\left(\frac{s_3}{s_1s_2} \p_3\right) + 
  \frac{\omega^2}{s_1s_2s_3c^2(x)} \right) u = f.
\]
The domain $[0,1]^3$ is discretized with a Cartesian grid with spacing
$h = 1/(n+1)$. As we discretize the equation with a couple number of
points per wavelength, the number $n$ of samples in each dimension is
proportional to $\omega$. The interior points of this grid are
\[
\P = \{ p_{i,j,k} = \left(ih,jh,kh\right) : 1\le i,j,k \le  n\}
\]
(see Figure \ref{fig:3Dgrid} (left)) and the total number of points is
equal to $N=n^3$.

\begin{figure}[h!]
  \begin{center}
    \includegraphics{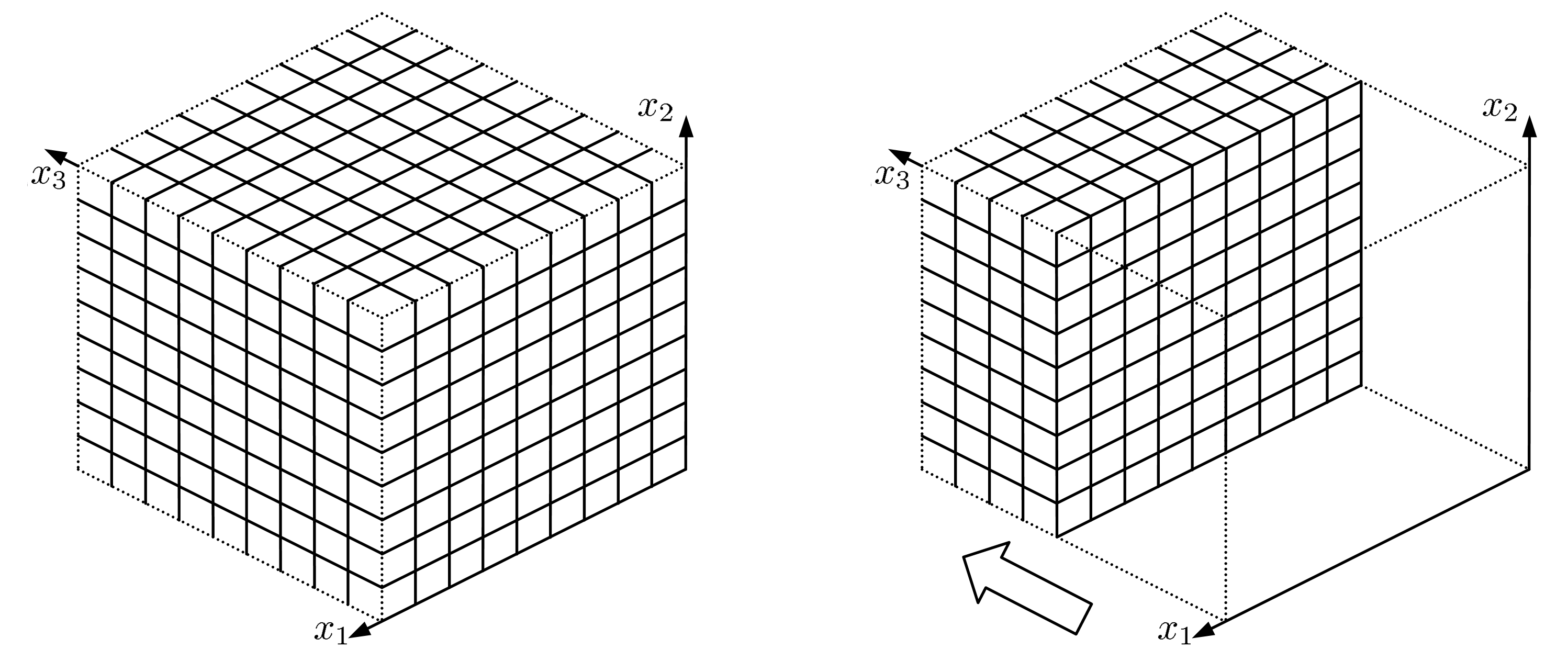}
  \end{center}
  \caption{Left: Discretization grid in 3D. Right: Sweeping order in
    3D. The remaining grid shows the unknowns yet to be processed.}
  \label{fig:3Dgrid}
\end{figure}

We denote by $u_{i,j,k}$, $f_{i,j,k}$, and $c_{i,j,k}$ the values of
$u(x)$, $f(x)$, and $c(x)$ at point $p_{i,j,k} = (ih,jh,kh)$. The
7-point stencil finite difference method writes down the equation at
points in $\P$ using central difference. The resulting equation at
$p_{i,j,k} = (ih,jh,kh)$ is
\begin{multline*}
  \frac{1}{h^2} \left(\frac{s_1}{s_2s_3}\right)_{i-\hf,j,k} u_{i-1,j,k}
  + \frac{1}{h^2} \left(\frac{s_1}{s_2s_3}\right)_{i+\hf,j,k} u_{i+1,j,k}
  + \frac{1}{h^2} \left(\frac{s_2}{s_1s_3}\right)_{i,j-\hf,k} u_{i,j-1,k} \\
  + \frac{1}{h^2} \left(\frac{s_2}{s_1s_3}\right)_{i,j+\hf,k} u_{i,j+1,k}
  + \frac{1}{h^2} \left(\frac{s_3}{s_1s_2}\right)_{i,j,k-\hf} u_{i,j,k-1}
  + \frac{1}{h^2} \left(\frac{s_3}{s_1s_2}\right)_{i,j,j+\hf} u_{i,j,k+1}\\
  + \left(
    \frac{\omega^2}{(s_1s_2s_3)_{i,j,k}\cdot c_{i,j,k}^2 }
    - \left(\cdots
    \right) 
  \right)
  u_{i,j,k} = f_{i,j,k}
\end{multline*} 
with $u_{i',j',k'}$ equal to zero for $(i',j',k')$ that violates $1\le
i',j',k'\le n$. Here $(\cdots)$ stands for the sum of the six
coefficients appeared in the first two lines. We order $u_{i,j,k}$ by
going through the dimensions in order and denote the vector containing
all unknowns by
\[
u = \left(u_{1,1,1}, u_{2,1,1},\ldots,u_{n,1,1}, \ldots,u_{1,n,n}, u_{2,n,n},\ldots,u_{n,n,n}\right)^t.
\]
Similarly, $f_{i,j,k}$ are ordered in the same way and the vector $f$ is
\[
f = \left(f_{1,1,1}, f_{2,1,1},\ldots,f_{n,1,1}, \ldots,f_{1,n,n}, f_{2,n,n},\ldots,f_{n,n,n}\right)^t.
\]
The whole system takes the form $A u = f$. We further introduce a
block version.  Define $\P_m$ to be the indices in the $m$-th row
\[
\P_m = \{p_{1,1,m}, p_{2,1,m}, \ldots, p_{n,n,m}\}  
\]
and introduce
\[
u_m = \left( u_{1,1,m}, u_{2,1,m},\ldots,u_{n,n,m} \right)^t,\quad
f_m = \left( f_{1,1,m}, f_{2,1,m},\ldots,f_{n,n,m} \right)^t.
\]
Then
\[
u = (u_1^t, u_2^t, \ldots, u_n^t)^t,\quad
f = (f_1^t, f_2^t, \ldots, f_n^t)^t.
\]
Using these notations, the system $Au=f$ takes the following block
tridiagonal form
\[
\begin{pmatrix}
  A_{1,1} & A_{1,2} & & \\
  A_{2,1} & A_{2,2} & \ddots & \\
  & \ddots & \ddots & A_{n-1,n}\\
  & & A_{n,n-1} & A_{n,n}
\end{pmatrix}
\begin{pmatrix}
  u_1\\
  u_2\\
  \vdots\\
  u_n
\end{pmatrix}
=
\begin{pmatrix}
  f_1\\
  f_2\\
  \vdots\\
  f_n
\end{pmatrix}
\]
where each block $A_{i,j}$ is of size $n^2 \times n^2$ and $A_{m,m-1}
= A_{m-1,m}^t$ are diagonal matrices. Similar to the 2D case, the
sweeping factorization eliminates the unknowns face by face, starting
from the face next to $x_3=0$ (illustrated in Figure \ref{fig:3Dgrid}
(right)). The algorithms for constructing and applying the sweeping
factorization are exactly the same as Algorithms \ref{alg:setupext}
and \ref{alg:solveext}). The matrix $T_m = S_m^{-1}$ is now the
discrete half-space Green's function with zero boundary condition at
$x_3 = (m+1)h$, restricted to the points on $x_3=mh$. Recall that in
the 2D case the off-diagonal blocks of $T_m$ is numerically
low-rank. In the 3D case, this is no longer exactly true. On the other
hand, since we only aim at constructing a preconditioner for the
Helmholtz problem, it is still reasonable to introduce a hierarchical
structure on the unknowns on the face $x_3=mh$ and use the
hierarchical matrix framework to approximate $T_m$ and $S_m$.

\subsection{Hierarchical matrix representation}
\label{sec:3Dprehier}

At the $m$-th layer for any fixed $m$, we build a hierarchical
structure for the grid points in $\P_m$ through bisections in both
$x_1$ and $x_2$ directions. At the top level (level 0), the set
\[
\J^0_{11} = \P_m.
\]
At level $\ell$, there are $2^\ell \times 2^\ell$ index sets
$\J^\ell_{ij}, i,j=1,\ldots,2^\ell$
\[
\J^\ell_{ij} = 
\{p_{s,t,m}:
(i-1)\cdot n/2^\ell + 1 \le s \le i\cdot n/2^\ell,
(j-1)\cdot n/2^\ell + 1 \le t \le j\cdot n/2^\ell
\}.
\]
The bisection is stopped when each set $\J^\ell_{ij}$ contains only a
small number of indices. Hence, the number of total levels $L$ is
equal to $\log_2 n - O(1)$. This hierarchical partition is illustrated
in Figure \ref{fig:3Dpart} (left)).
  
\begin{figure}[h!]
  \begin{center}
    \includegraphics{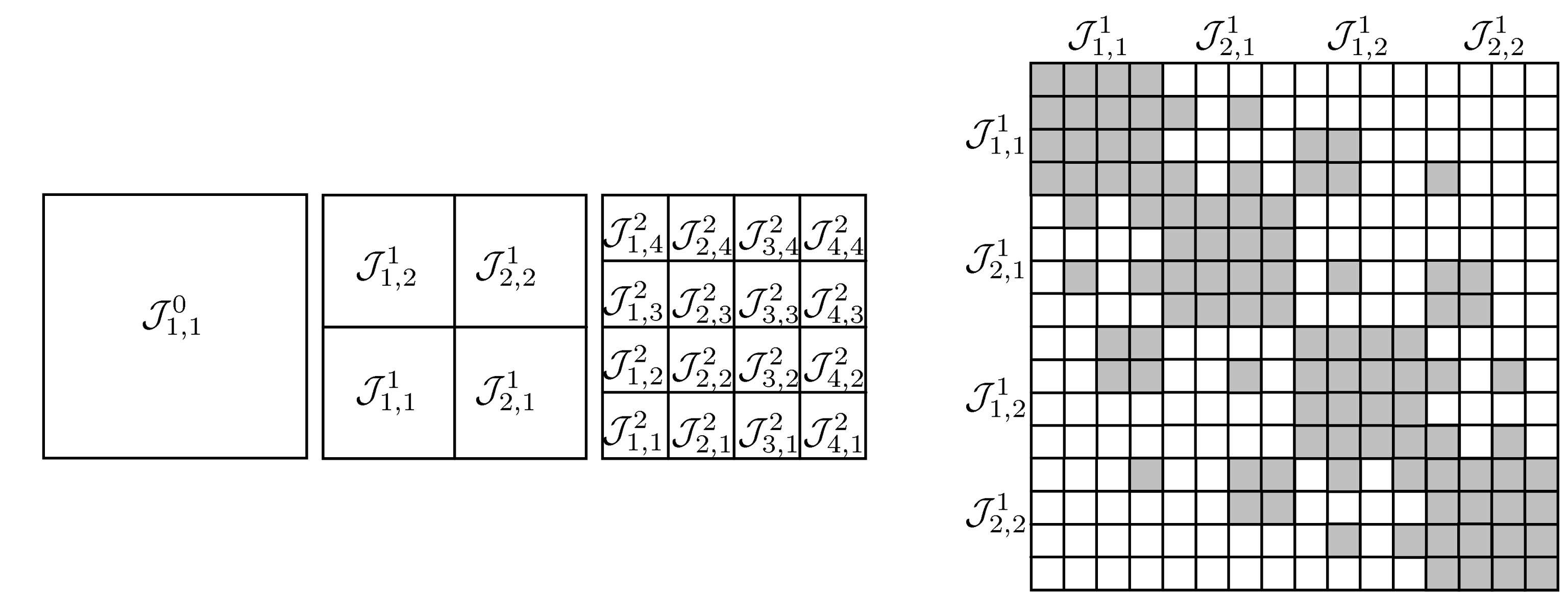}
  \end{center}
  \caption{Hierarchical matrix representation. Left: hierarchical
    decomposition of the index set $\J$ for each layer. Right: Induced
    partitioning of the matrix $T_m$ in the strongly admissible
    case. Blocks in white are stored in low-rank factorized
    form. Blocks in gray are stored densely.  }
  \label{fig:3Dpart}
\end{figure}

We write $G(\J^\ell_{ij}, \J^\ell_{i'j'})$ (the restriction of a
matrix $G$ to $\J^\ell_{ij}$ and $\J^\ell_{i'j'}$) as
$G^\ell_{ij,i'j'}$.  The strongly admissible case is used here and two
index sets $\J^\ell_{ij}$ and $\J^\ell_{i'j'}$ on the same level
$\ell$ are considered well-separated from each other if
$\max(|i-i'|,|j-j'|)>1$. Recall that the interaction list of
$\J^\ell_{ij}$ is defined to be the set of all index sets
$\J^\ell_{i'j'}$ such that $\J^\ell_{ij}$ is well-separated from
$\J^\ell_{i'j'}$ but $\J^\ell_{ij}$'s parent is not well-separated
from $\J^\ell_{i'j'}$'s parent. When $\J^\ell_{ij}$ and
$\J^\ell_{i'j'}$ are well-separated from each other, the numerical
rank of their interaction $G^\ell_{ij,i'j'}$ is of order
$O(n/2^\ell)$. As the number of indices in $\J^\ell_{ij}$ and
$\J^\ell_{i'j'}$ is equal to $(n/2^\ell)^2$, the numerical rank scales
like the square root of the number of indices in each set. Therefore,
it is still favorable to store the interaction $G^\ell_{ij,i'j'}$ in a
factorized form. In principle, the rank $R$ of the factorized form
should scale like $O(n/2^\ell)$. As the construction cost of the
approximate sweeping factorization scales like $O(R^2 n^3 \log^2 n) =
O(R^2 N \log^2 N)$, following this scaling can be rather costly in
practice. Instead, we choose $R$ to be a rather small constant as the
goal is only to construct a preconditioner. An illustration of this
hierarchical representation is given in Figure \ref{fig:3Dpart}
(right).

Once the details of the hierarchical matrix representation are
determined, the construction of the approximate $LDL^t$ factorization
and the application of its inverse take the same form as Algorithms
\ref{alg:setup} and \ref{alg:solve}, respectively. The operator
\[
M: 
f = (f_1^t, f_2^t, \ldots, f_n^t)^t \rightarrow
u = (u_1^t, u_2^t, \ldots, u_n^t)^t
\]
defined by Algorithm \ref{alg:solve} is an approximate inverse and a
good preconditioner of the discrete Helmholtz operator $A$.
Therefore, we solve the preconditioner system
\[
MA u = Mf
\]
using the GMRES algorithm. As the cost of applying $M$ to any vector
is $O(R n^3 \log n) = O(R N \log N)$, the total cost is $O(N_I R
n^3\log n) = O(N_I R N \log N)$, where $N_I$ is the number of
iterations. The numerical results in Section \ref{sec:3Dnum}
demonstrate that $N_I$ and $R$ are in practice rather small.

\section{Numerical Results in 3D}
\label{sec:3Dnum}

In this section, we present several numerical results to illustrate
the properties of the sweeping preconditioner described in Section
\ref{sec:3Dpre}.  We use the GMRES method as the iterative solver with
relative residue tolerance equal to $10^{-3}$.  The examples in this
seciton have the PML boundary condition specified at all sides.


We consider three velocity fields in the domain $[0,1]^3$:
\begin{enumerate}
\item The first velocity field is a converging lens with a Gaussian
  profile at the center of the domain (see Figure
  \ref{fig:3Dnumspeed}(a)).
\item The second velocity field is a vertical waveguide with Gaussian
  cross section (see Figure \ref{fig:3Dnumspeed}(b)).
\item The third velocity field is a random velocity field  (see
  Figure \ref{fig:3Dnumspeed}(c)).
\end{enumerate}

\begin{figure}[h!]
  \begin{center}
    \begin{tabular}{ccc}
      \includegraphics[height=1.6in]{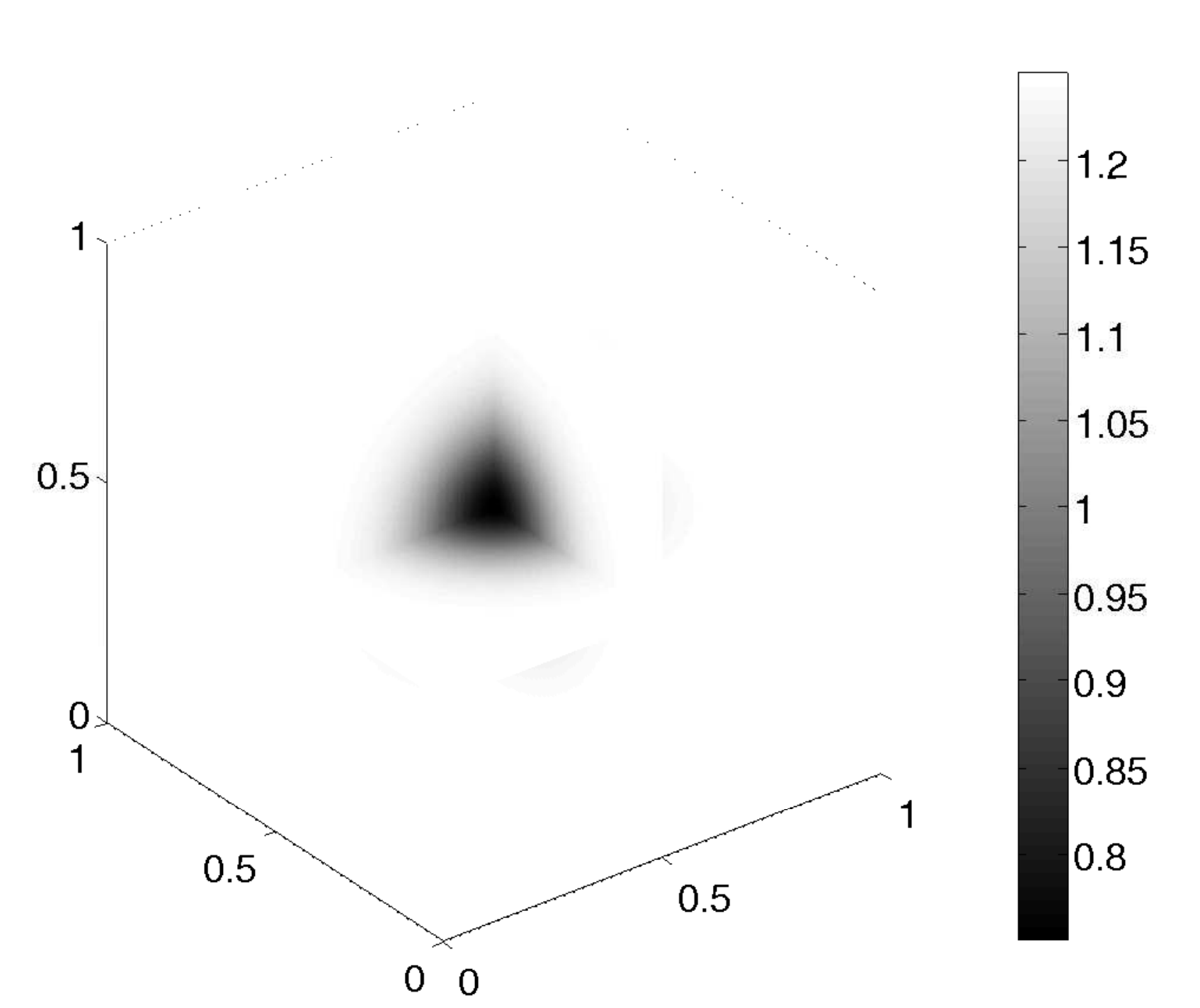}&\includegraphics[height=1.6in]{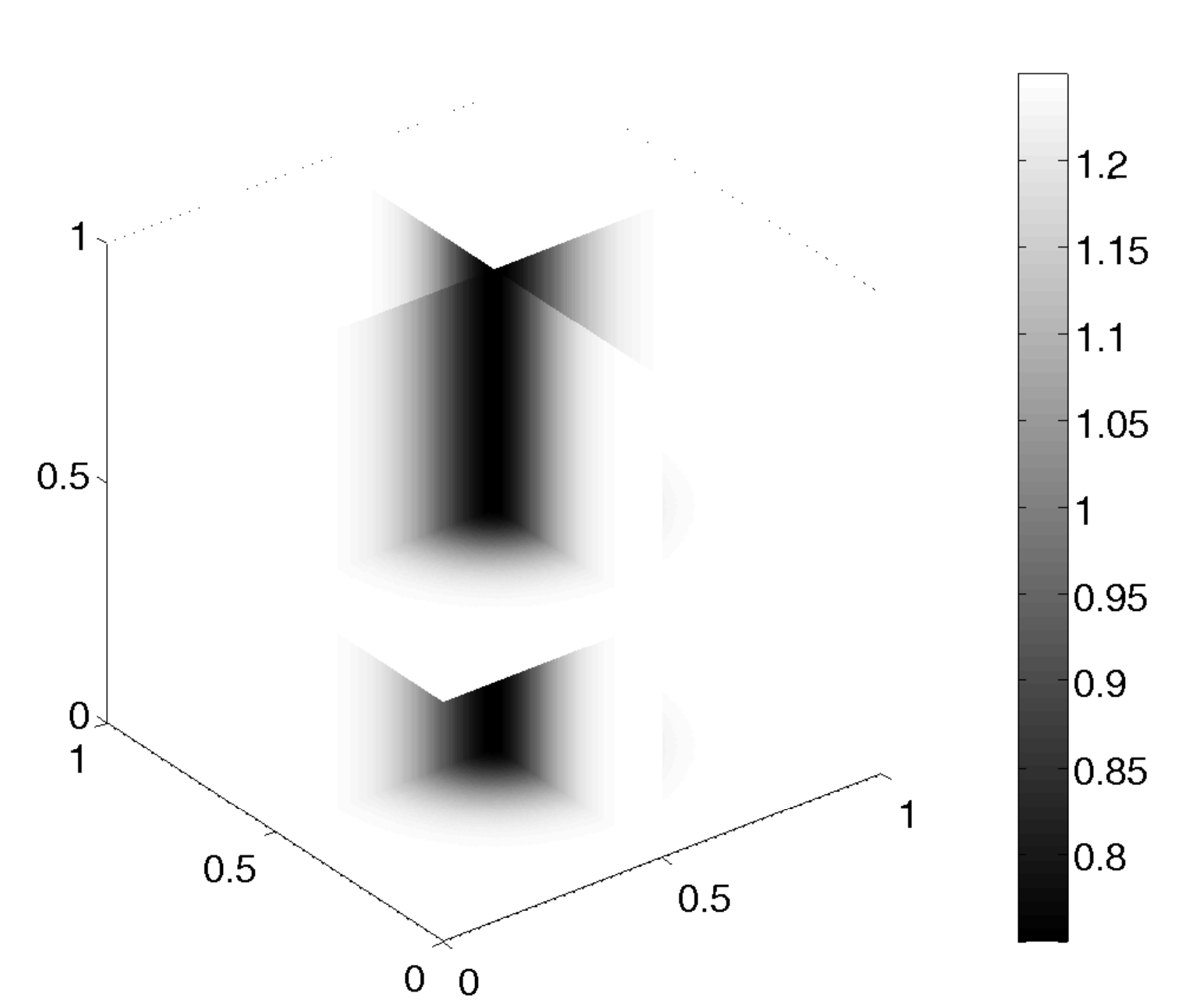}& \includegraphics[height=1.6in]{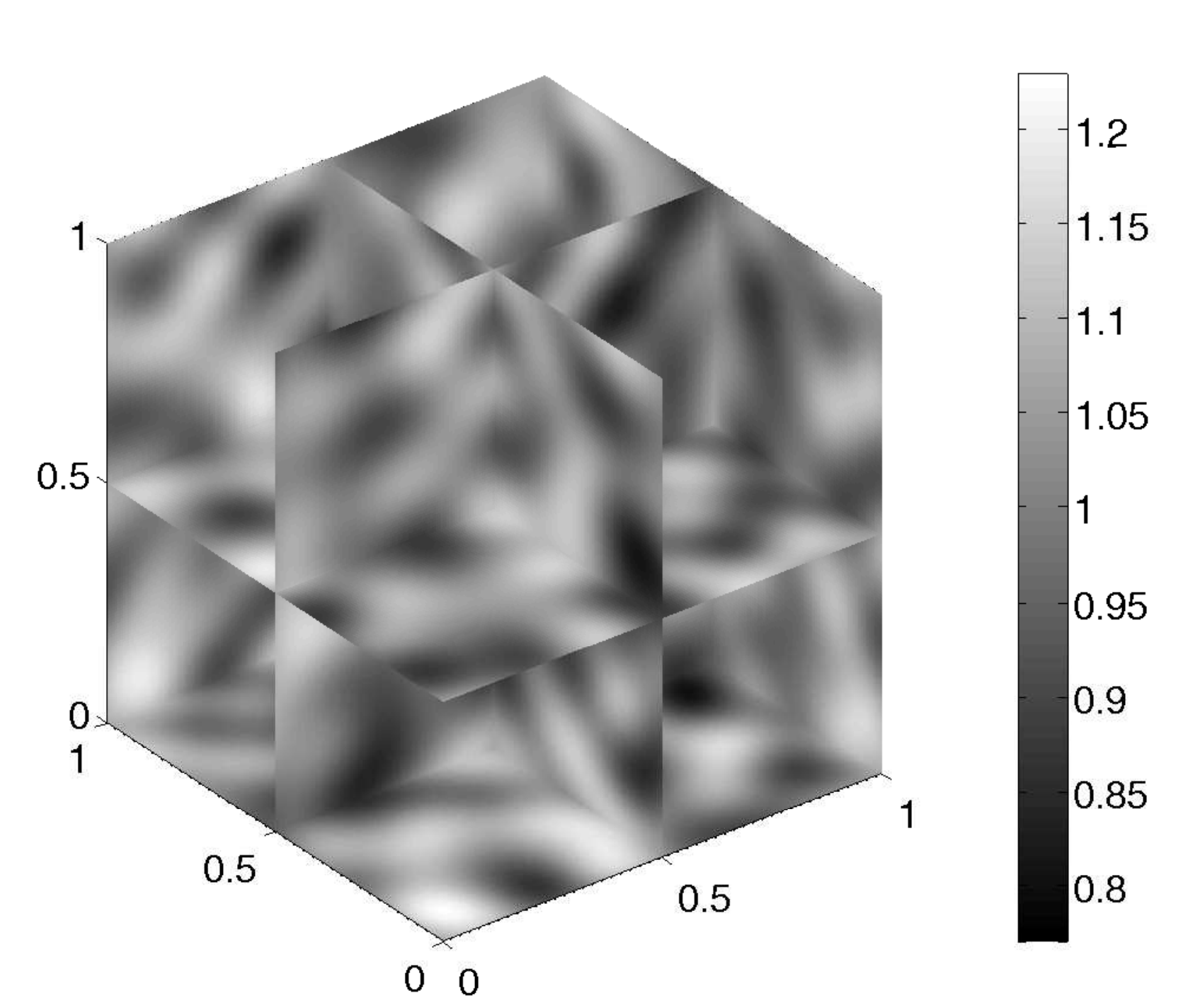}\\
      (a) & (b) & (c)
    \end{tabular}
  \end{center}
  \caption{Test velocity fields. For each velocity field, the cross
    sections at $x_1=0.5$, $x_2=0.5$, and $x_3=0.5$ are shown.}
  \label{fig:3Dnumspeed}
\end{figure}

For each problem, we test with two external forces $f(x)$.
\begin{enumerate}
\item The first external force $f(x)$ is a Gaussian point source
  located at $(x_1,x_2,x_3) = (0.5, 0.5, 0.25)$. The response of this
  forcing term generates spherical waves propagating at all
  directions. Due to the variations of the velocity field, the
  circular waves should bend and form caustics.
\item The second external force $f(x)$ is a Gaussian wave packet whose
  wavelength is comparable to the typical wavelength of the
  domain. This packet centers at $(x_1,x_2,x_3) = (0.5, 0.25, 0.25)$
  and points to the $(0,1,1)$ direction. The response of this forcing
  term generates a Gaussian beam initially pointing towards the
  $(0,1,1)$ direction.
\end{enumerate}

For each velocity field, we perform tests for $\frac{\omega}{2\pi}$
equal to $5,10,20$. In these tests, we discretize with $q=8$ points
per wavelength. Hence, the number of points in each dimension is
$n=40,80,160$. Recall that $R$ is the rank of the factorized form of
the hierarchical matrix representation. It is clear from the
discussion of Section \ref{sec:3Dprehier} that the value of $R$ should
grow with $\omega$ (and $n$). Here, we choose $R=2, 3, 4$ for
$\omega=5,10,20$, respectively. The sweeping direction is bottom-up
from $x_3=0$ to $x_3=1$.

The results of the first velocity field are reported in Table
\ref{tbl:3DPML1}. The two plots show the solutions of the two external
forces on a plane near $x_1 = 1/2$. $T_{\text{setup}}$ is the time
used to construct the preconditioner in seconds. $N_{\text{iter}}$ is
the number of iterations of the preconditioned GMRES solver and
$T_{\text{solve}}$ is the solution time. The analysis in Section
\ref{sec:3Dprehier} shows that the setup time scales like $O(R^2 n^3
\log^2n) = O(R^2 N \log^2 N)$. When $\omega$ grows from 5 to 20, since
$R$ increases from $2$ to $4$, $T_{\text{setup}}$ increases by a
factor of $20$ times each time $\omega$ doubles. Though the setup cost
grows significantly faster than the linear scaling $O(N)$, it is still
much better than the $O(N^2)$ scaling of the multifrontal method. A
nice feature of the sweeping preconditioner is that the number of
iterations is extremely small. In fact, in all cases, the GMRES solver
converges in at most 7 iterations. Finally, we would like to point out
that our algorithm is quite efficient: for the case with
$\omega/(2\pi)=20$ with more than four million unknowns, the solution
time is only about 3 minutes.

The results of the second and the third velocity fields are reported
in Tables \ref{tbl:3DPML2} and \ref{tbl:3DPML3}, respectively. In all
cases, the GMRES solver converges in at most 5 iterations when
combined with the sweeping preconditioner.

\begin{table}[h!]
  \begin{center}
    \includegraphics[height=2.3in]{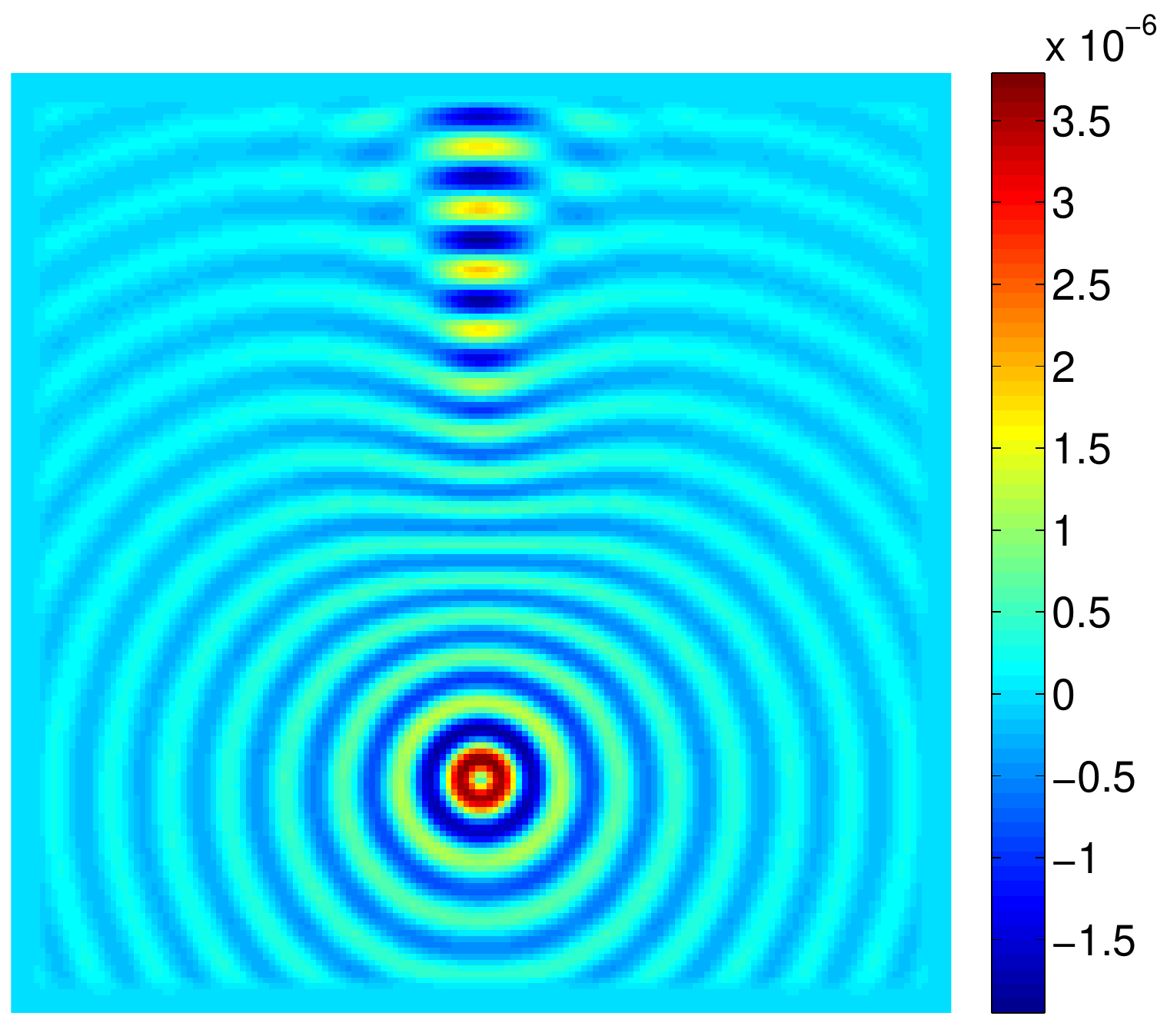}
    \includegraphics[height=2.3in]{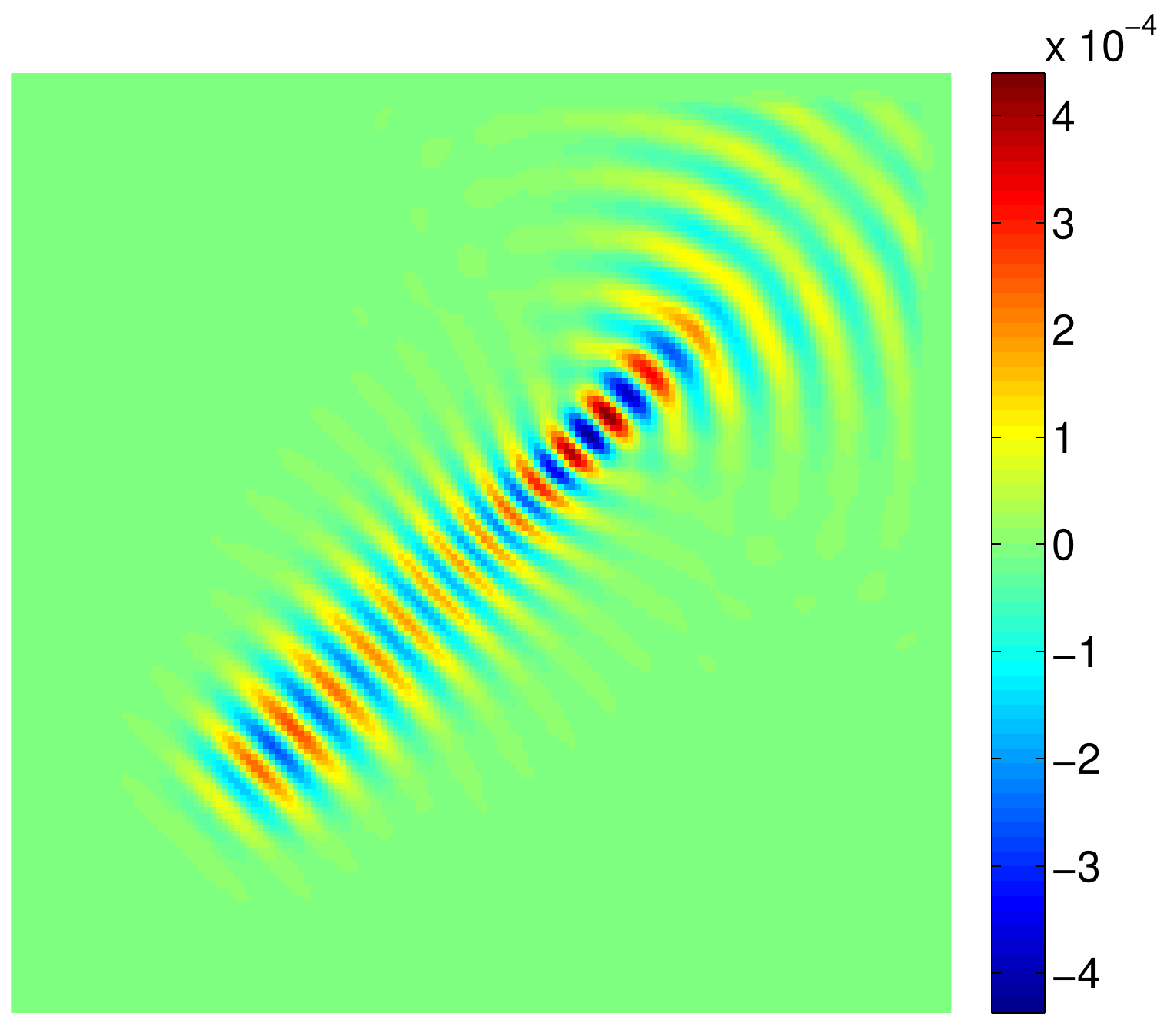}\\
    \begin{tabular}{|ccccc|cc|cc|}
      \hline
      \multicolumn{5}{|c|}{} & \multicolumn{2}{|c|}{Test 1} & \multicolumn{2}{|c|}{Test 2}\\
      \hline
      $\omega/(2\pi)$ & $q$ & $N=n^3$ & $R$ & $T_{\text{setup}}$ & $N_{\text{iter}}$ & $T_{\text{solve}}$ & $N_{\text{iter}}$ & $T_{\text{solve}}$\\
      \hline
      5  & 8 & $40^3$  & 2 & 8.99e+01 & 3 & 7.10e-01 & 3 & 7.20e-01 \\
      10 & 8 & $80^3$  & 3 & 2.30e+03 & 7 & 1.87e+01 & 5 & 1.40e+01 \\
      20 & 8 & $160^3$ & 4 & 4.73e+04 & 6 & 1.90e+02 & 5 & 1.61e+02 \\
      \hline
    \end{tabular}
  \end{center}
  \caption{Results of velocity field 1 for different $\omega$. 
    Top: Solutions for two external forces with $\omega/(2\pi)=20$ on a plane near $x_1=0.5$.
    Bottom: Results for different $\omega$.  }
  \label{tbl:3DPML1}
\end{table}

\begin{table}[h!]
  \begin{center}
    \includegraphics[height=2.3in]{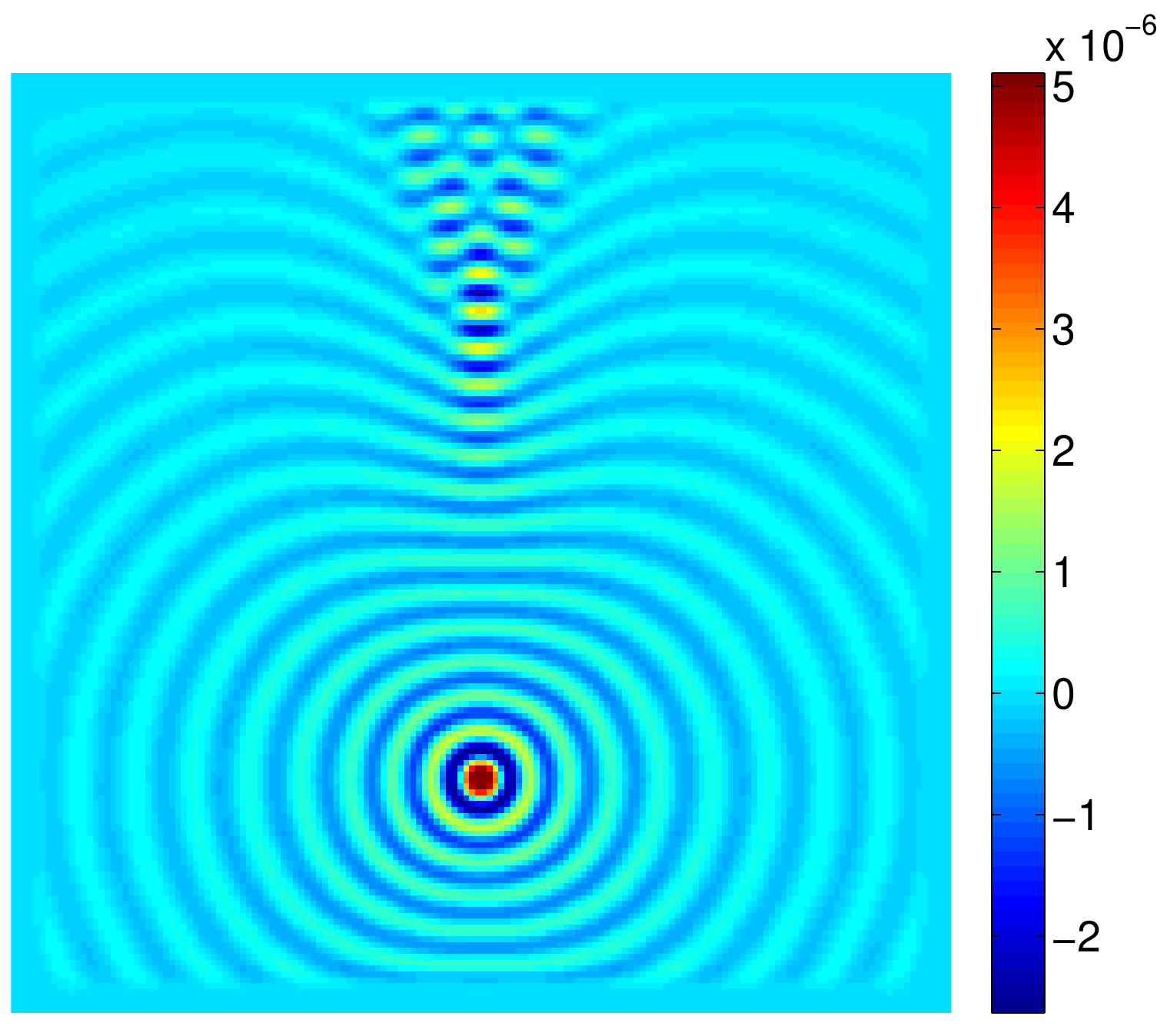}
    \includegraphics[height=2.3in]{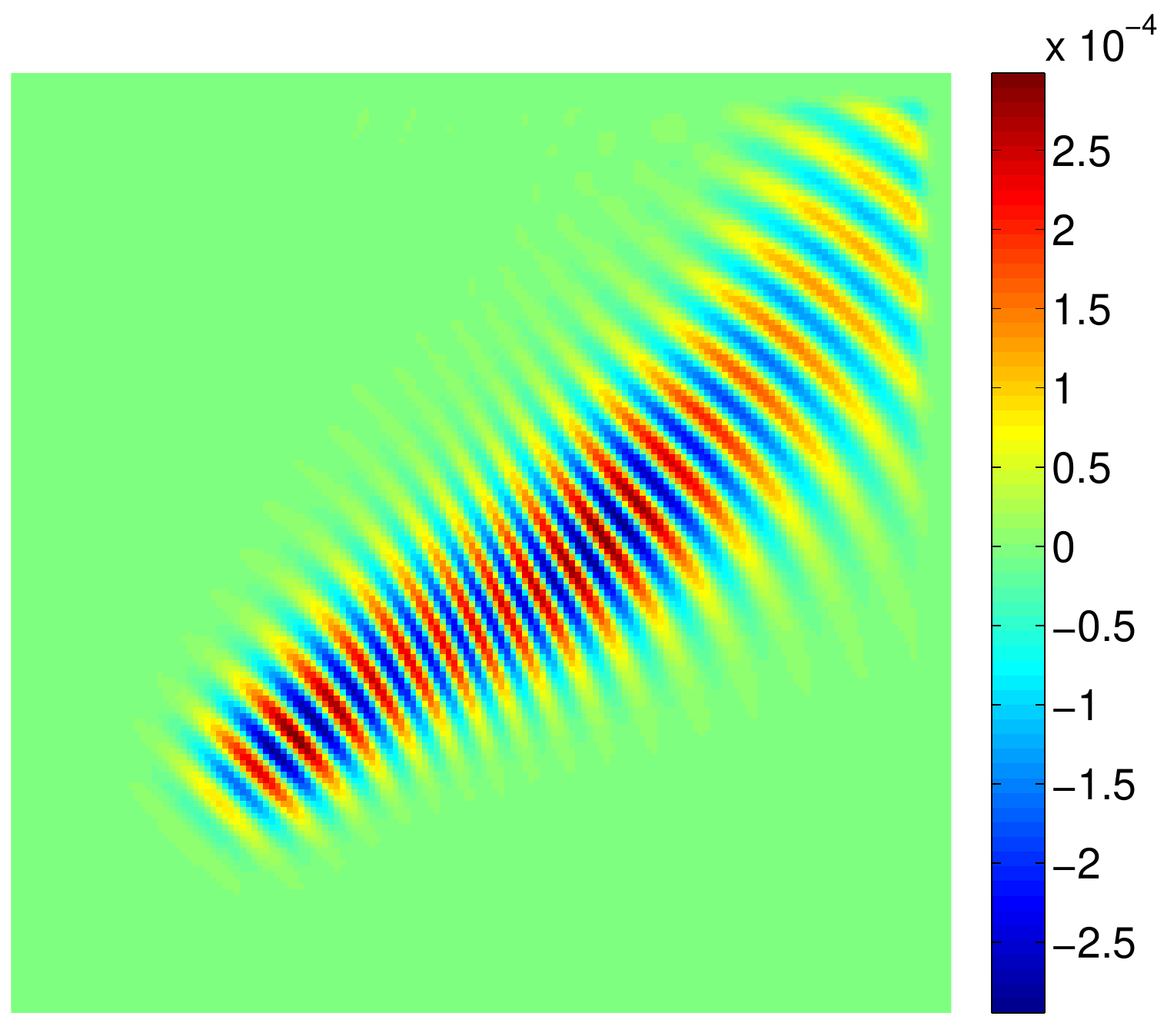}\\
    \begin{tabular}{|ccccc|cc|cc|}
      \hline
      \multicolumn{5}{|c|}{} & \multicolumn{2}{|c|}{Test 1} & \multicolumn{2}{|c|}{Test 2}\\
      \hline
      $\omega/(2\pi)$ & $q$ & $N=n^3$ & $R$ & $T_{\text{setup}}$ & $N_{\text{iter}}$ & $T_{\text{solve}}$ & $N_{\text{iter}}$ & $T_{\text{solve}}$\\
      \hline
      5  & 8 & $40^3$  & 2 & 8.95e+01 & 3 & 7.10e-01 & 3 & 7.00e-01 \\
      10 & 8 & $80^3$  & 3 & 2.35e+03 & 5 & 1.40e+01 & 3 & 9.38e+00 \\
      20 & 8 & $160^3$ & 4 & 4.73e+04 & 4 & 1.38e+02 & 4 & 1.34e+02 \\
      \hline
    \end{tabular}
  \end{center}
  \caption{Results of velocity field 2 for different $\omega$. 
    Top: Solutions for two external forces with $\omega/(2\pi)=20$ on a plane near $x_1=0.5$.
    Bottom: Results for different $\omega$.  }
  \label{tbl:3DPML2}
\end{table}

\begin{table}[h!]
  \begin{center}
    \includegraphics[height=2.3in]{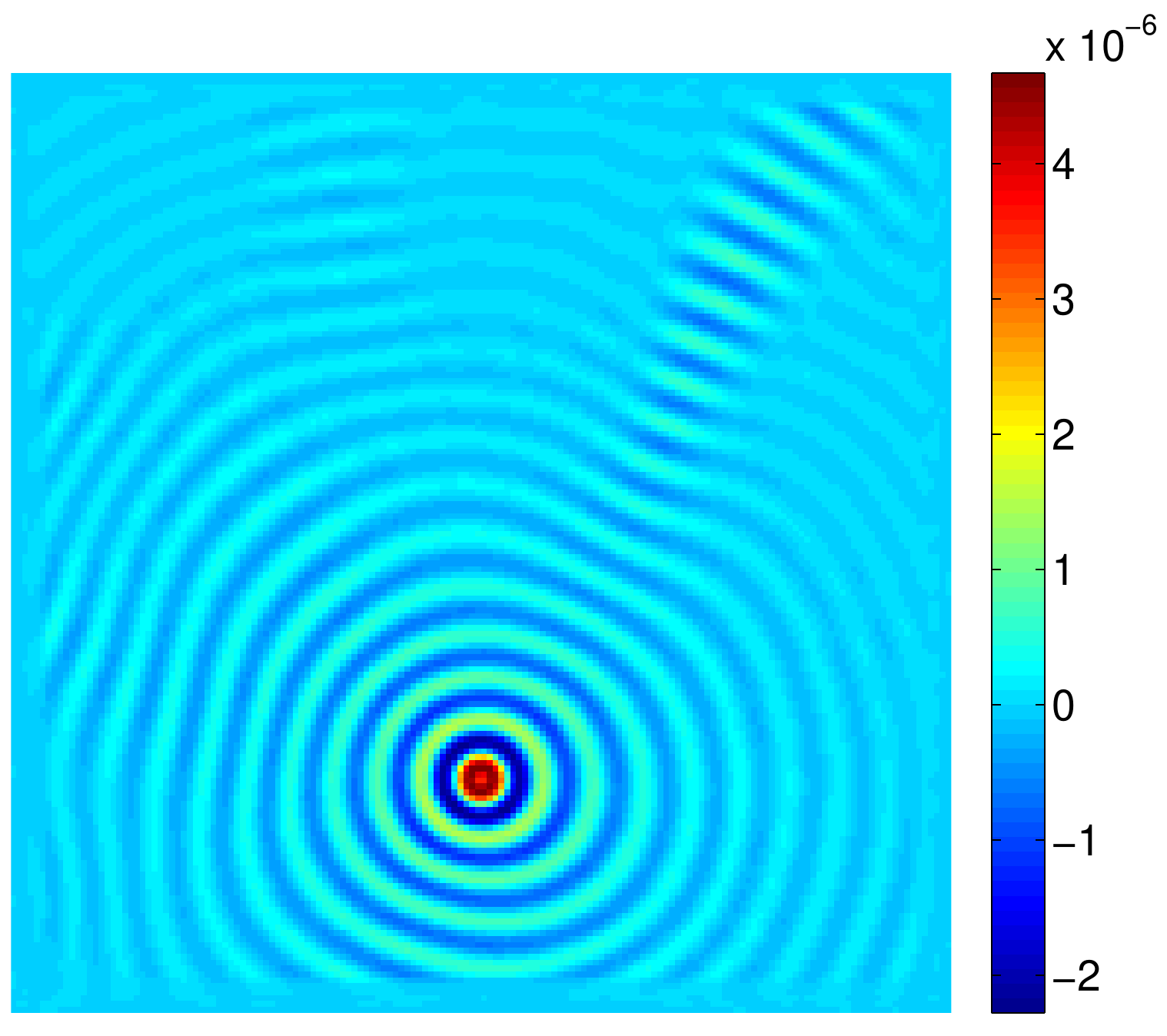}
    \includegraphics[height=2.3in]{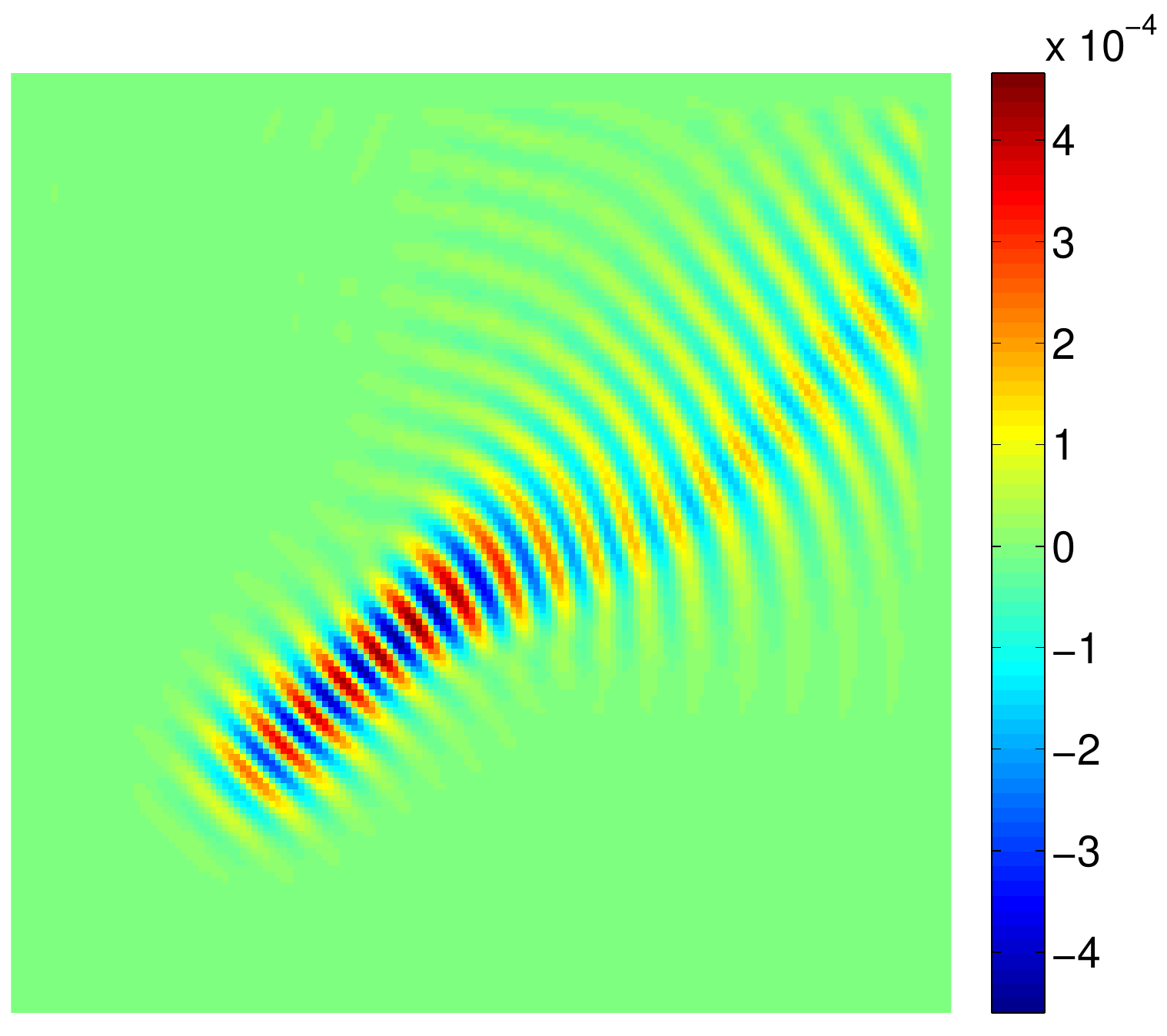}\\
    \begin{tabular}{|ccccc|cc|cc|}
      \hline
      \multicolumn{5}{|c|}{} & \multicolumn{2}{|c|}{Test 1} & \multicolumn{2}{|c|}{Test 2}\\
      \hline
      $\omega/(2\pi)$ & $q$ & $N=n^3$ & $R$ & $T_{\text{setup}}$ & $N_{\text{iter}}$ & $T_{\text{solve}}$ & $N_{\text{iter}}$ & $T_{\text{solve}}$\\
      \hline
      5  & 8 & $40^3$  & 2 & 9.00e+01 & 3 & 7.20e-01 & 3 & 7.20e-01 \\
      10 & 8 & $80^3$  & 3 & 2.37e+03 & 4 & 1.22e+01 & 3 & 9.90e+00 \\
      20 & 8 & $160^3$ & 4 & 4.74e+04 & 4 & 1.37e+02 & 3 & 1.07e+02 \\
      \hline
    \end{tabular}
  \end{center}
  \caption{Results of velocity field 3 for different $\omega$. 
    Top: Solutions for two external forces with $\omega/(2\pi)=20$ on a plane near $x_1=0.5$.
    Bottom: Results for different $\omega$.  }
  \label{tbl:3DPML3}
\end{table}

\section{Conclusion and Future Work}
\label{sec:conc}

In this paper, we have proposed a sweeping preconditioner for the
iterative solution of variable coefficient Helmholtz equations in two
and three dimensions. The construction of the preconditioner is based
on an approximate block $LDL^t$ factorization that eliminates the
unknowns layer by layer starting from an absorbing layer. By
representing and manipulating the intermediate Schur complement
matrices in the hierarchical matrix framework, we have obtained
preconditioners with almost linear cost. Numerical examples
demonstrate that, when combined with standard iterative solvers, these
new preconditioners result almost $\omega$-independent iteration
numbers.

Some questions remain open. First, in the 2D case, we have proved the
compressibility result under the constant coefficient case. A natural
question is to what extent this is still true for a general velocity
field.

The hierarchical matrix representation may not be very accurate for
the Schur complement matrices in 3D, since some high-rank off-diagonal
blocks are stored in a low-rank factorized form. Yet our algorithm
works well with very small iteration numbers. It is important to
understand why this is the case and also to investigate whether other
matrix representations would be able to provide more accurate
approximations for $T_m$.

The memory space required by the sweeping preconditioners is linear
with respect to the number of unknowns. However, the prefactor is
higher compared to the shifted Laplacian preconditioners and the ILU
preconditioners. Most of the memory space is in fact used to store the
diagonal part of $T_m$, which corresponds to the local part of the
half-space Green's function. One improvement is to use the asymptotic
formula of the Green's function to represent the local part
analytically and this can eliminate the need of storing the diagonal
part of the hierarchical matrices.

The matrix representation used here is often referred as the the $\H^1$
form of the hierarchical matrix algebra. More efficient and
sophisticated versions are the uniform $\H^1$ form and the $\H^2$
form. For our problem, Algorithm \ref{alg:setup} requires the matrices
to be represented in the $\H^1$ form since it uses the matrix
inversion procedure. However, Algorithm \ref{alg:solve} of applying
the sweeping preconditioner can potentially speed up dramatically when
the $\H^2$ form is used.

We have chosen the PML for the numerical implementation of the
Sommerfeld condition. Many other boundary conditions are available and
commonly used. The sweeping approach should work for these boundary
conditions, as we have briefly demonstrated for the second order
ABC. The design and implementation of these other boundary conditions
should minimize non-physical reflections in order for the sweeping
preconditioner to do well. 

The second order central difference scheme is used to discretize the
Helmholtz equation in this paper. We would like to investigate other
more accurate stencils and other types of discretizations such as
$h/p$ finite elements, spectral elements, and discontinuous Galerkin
methods.

Since high frequency fields typically oscillate rapidly on a similar
scale throughout the computational domain, uniform grids are very
common. There are however situation where unstructured grids would be
natural. The sweeping approach and more general hierarchical matrix
representations can also be used in this context. The challenge here
is to maintain compatibility between the matrix representation and the
geometry as one sweeps through the computational domain. In a second
paper \cite{EngquistYing:10b} another variant of sweeping
preconditioning is presented, which is more flexible with respect to
unstructured and adaptive grids.

The sequential nature of the sweeping approach complicates
parallelization of the algorithm. One possibility is to use parallel
hierarchical matrix representation for each layer. This would
parallelize an inner part of the algorithm. Another technique would
leverage the idea of domain decomposition and use the sweeping
preconditioner within each subdomain. The subdomains should then be
coupled with absorbing boundary conditions.

The Helmholtz equation is only the simplest example of time-harmonic
wave equations. Other cases include elasticity equation and Maxwell
equations. For these more complicated systems, multiple wave numbers
coexist even for the constant coefficient case. The basic idea of the
sweeping preconditioner should apply but the details need to be worked
out.


\bibliographystyle{abbrv}
\bibliography{ref}

\def\ocirc#1{\ifmmode\setbox0=\hbox{$#1$}\dimen0=\ht0 \advance\dimen0
  by1pt\rlap{\hbox to\wd0{\hss\raise\dimen0
  \hbox{\hskip.2em$\scriptscriptstyle\circ$}\hss}}#1\else {\accent"17 #1}\fi}
\begin{thebibliography}{10}

\bibitem{BaylissGoldsteinTurkel:83}
A.~Bayliss, C.~I. Goldstein, and E.~Turkel.
\newblock An iterative method for the {H}elmholtz equation.
\newblock {\em Journal of Computational Physics}, 49(3):443 -- 457, 1983.

\bibitem{BenamouDespres:97}
J.-D. Benamou and B.~Despr{\`e}s.
\newblock A domain decomposition method for the {H}elmholtz equation and
  related optimal control problems.
\newblock {\em J. Comput. Phys.}, 136(1):68--82, 1997.

\bibitem{BenziHawsTuma:00}
M.~Benzi, J.~C. Haws, and M.~T{\ocirc{u}}ma.
\newblock Preconditioning highly indefinite and nonsymmetric matrices.
\newblock {\em SIAM J. Sci. Comput.}, 22(4):1333--1353 (electronic), 2000.

\bibitem{Berenger:94}
J.-P. Berenger.
\newblock A perfectly matched layer for the absorption of electromagnetic
  waves.
\newblock {\em J. Comput. Phys.}, 114(2):185--200, 1994.

\bibitem{Biondi:06}
B.~L. Biondi.
\newblock {\em 3D Seismic Imaging}.
\newblock Society of Exploration Geophysicists, 2006.

\bibitem{BleszynskiBleszynskiJaroszewicz:96}
E.~{Bleszynski}, M.~{Bleszynski}, and T.~{Jaroszewicz}.
\newblock {AIM}: Adaptive integral method for solving large-scale
  electromagnetic scattering and radiation problems.
\newblock {\em Radio Science}, 31:1225--1252, 1996.

\bibitem{BormGrasedyckHackbusch:06}
S.~B\"orm, L.~Grasedyck, and W.~Hackbusch.
\newblock Hierarchical matrices, 2006.
\newblock Max-Planck-Institute Lecture Notes.

\bibitem{BrandtLivshits:97}
A.~Brandt and I.~Livshits.
\newblock Wave-ray multigrid method for standing wave equations.
\newblock {\em Electron. Trans. Numer. Anal.}, 6(Dec.):162--181 (electronic),
  1997.
\newblock Special issue on multilevel methods (Copper Mountain, CO, 1997).

\bibitem{BrunoMcKay:05}
O.~P. Bruno and E.~M. Hyde.
\newblock Higher-order {F}ourier approximation in scattering by
  two-dimensional, inhomogeneous media.
\newblock {\em SIAM J. Numer. Anal.}, 42(6):2298--2319 (electronic), 2005.

\bibitem{ChewWeedon:94}
W.~C. Chew and W.~H. Weedon.
\newblock A 3-d perfectly matched medium from modified {M}axwell's equations
  with stretched coordinates.
\newblock {\em Microwave Opt. Tech. Lett}, 7:599--604, 1994.

\bibitem{Despres:91}
B.~Despr{\'e}s.
\newblock Domain decomposition method and the {H}elmholtz problem.
\newblock In {\em Mathematical and numerical aspects of wave propagation
  phenomena ({S}trasbourg, 1991)}, pages 44--52. SIAM, Philadelphia, PA, 1991.

\bibitem{DuffReid:83}
J.~Duff and J.~Reid.
\newblock {The multifrontal solution of indefinite sparse symmetric linear
  equations}.
\newblock {\em ACM Trans. Math. Software}, 9:302--325, 1983.

\bibitem{ElmanErnstOLeary:01}
H.~C. Elman, O.~G. Ernst, and D.~P. O'Leary.
\newblock A multigrid method enhanced by {K}rylov subspace iteration for
  discrete {H}elmholtz equations.
\newblock {\em SIAM J. Sci. Comput.}, 23(4):1291--1315 (electronic), 2001.

\bibitem{EngquistMajda:77}
B.~Engquist and A.~Majda.
\newblock Absorbing boundary conditions for the numerical simulation of waves.
\newblock {\em Math. Comp.}, 31(139):629--651, 1977.

\bibitem{EngquistMajda:79}
B.~Engquist and A.~Majda.
\newblock Radiation boundary conditions for acoustic and elastic wave
  calculations.
\newblock {\em Comm. Pure Appl. Math.}, 32(3):314--358, 1979.

\bibitem{EngquistRunborg:03}
B.~Engquist and O.~Runborg.
\newblock Computational high frequency wave propagation.
\newblock {\em Acta Numer.}, 12:181--266, 2003.

\bibitem{EngquistYing:07}
B.~Engquist and L.~Ying.
\newblock Fast directional multilevel algorithms for oscillatory kernels.
\newblock {\em SIAM J. Sci. Comput.}, 29(4):1710--1737 (electronic), 2007.

\bibitem{EngquistYing:09}
B.~Engquist and L.~Ying.
\newblock A fast directional algorithm for high frequency acoustic scattering
  in two dimensions.
\newblock {\em Commun. Math. Sci.}, 7(2):327--345, 2009.

\bibitem{EngquistYing:10b}
B.~Engquist and L.~Ying.
\newblock Sweeping preconditioner for the {H}elmholtz equation: moving
  perfectly matched layers.
\newblock preprint, 2010.

\bibitem{Erlangga:08}
Y.~A. Erlangga.
\newblock Advances in iterative methods and preconditioners for the {H}elmholtz
  equation.
\newblock {\em Arch. Comput. Methods Eng.}, 15(1):37--66, 2008.

\bibitem{ErlanggaOosterleeVuik:06}
Y.~A. Erlangga, C.~W. Oosterlee, and C.~Vuik.
\newblock A novel multigrid based preconditioner for heterogeneous {H}elmholtz
  problems.
\newblock {\em SIAM J. Sci. Comput.}, 27(4):1471--1492 (electronic), 2006.

\bibitem{ErlanggaVuikOosterlee:04}
Y.~A. Erlangga, C.~Vuik, and C.~W. Oosterlee.
\newblock On a class of preconditioners for solving the {H}elmholtz equation.
\newblock {\em Appl. Numer. Math.}, 50(3-4):409--425, 2004.

\bibitem{FishQu:00}
J.~Fish and Y.~Qu.
\newblock Global-basis two-level method for indefinite systems. {I}.
  {C}onvergence studies.
\newblock {\em Internat. J. Numer. Methods Engrg.}, 49(3):439--460, 2000.

\bibitem{GanderNataf:05}
M.~J. Gander and F.~Nataf.
\newblock An incomplete {LU} preconditioner for problems in acoustics.
\newblock {\em J. Comput. Acoust.}, 13(3):455--476, 2005.

\bibitem{George:73}
J.~George.
\newblock Nested dissection of a regular finite element mesh.
\newblock {\em SIAM J. Numer. Anal.}, 10:345--363, 1973.

\bibitem{GrasedyckHackbusch:03}
L.~Grasedyck and W.~Hackbusch.
\newblock Construction and arithmetics of $\mathcal{H}$-matrices.
\newblock {\em Computing}, 70(4):295--334, 2003.

\bibitem{Hackbusch:99}
W.~Hackbusch.
\newblock A sparse matrix arithmetic based on $\mathcal{H}$-matrices. {P}art
  {I}: {I}ntroduction to $\mathcal{H}$-matrices.
\newblock {\em Computing}, 62:89--108, 1999.

\bibitem{HalkoMartinssonTropp:09}
N.~Halko, P.-G. Martinsson, and J.~Tropp.
\newblock Finding structure with randomness: Stochastic algorithms for
  constructing approximate matrix decompositions.
\newblock preprint, arXiv:0909.4061, 2009.

\bibitem{Johnson:10}
S.~Johnson.
\newblock {Notes on perfectly matched layers}.
\newblock Technical Report, Massachusetts Institute of Technology, 2010.

\bibitem{LairdGiles:02}
A.~Laird and M.~Giles.
\newblock {Preconditioned iterative solution of the 2D Helmholtz equation}.
\newblock Technical Report, NA 02-12, Computing Lab, Oxford University, 2002.

\bibitem{LeeManteuffelMcCormickRuge:00}
B.~Lee, T.~A. Manteuffel, S.~F. McCormick, and J.~Ruge.
\newblock First-order system least-squares for the {H}elmholtz equation.
\newblock {\em SIAM J. Sci. Comput.}, 21(5):1927--1949 (electronic), 2000.
\newblock Iterative methods for solving systems of algebraic equations (Copper
  Mountain, CO, 1998).

\bibitem{LibertyWoolfeMartinssonRokhlinTygert07}
E.~Liberty, F.~Woolfe, P.-G. Martinsson, V.~Rokhlin, and M.~Tygert.
\newblock Randomized algorithms for the low-rank approximation of matrices.
\newblock {\em Proc. Natl. Acad. Sci. USA}, 104(51):20167--20172, 2007.

\bibitem{Liu:92}
J.~Liu.
\newblock {The multifrontal method for sparse matrix solution: Theory and
  practice}.
\newblock {\em SIAM Rev.}, 34:82--109, 1992.

\bibitem{LivshitsBrandt:06}
I.~Livshits and A.~Brandt.
\newblock Accuracy properties of the wave-ray multigrid algorithm for
  {H}elmholtz equations.
\newblock {\em SIAM J. Sci. Comput.}, 28(4):1228--1251 (electronic), 2006.

\bibitem{Margrave:99}
G.~F. Margrave and R.~J. Ferguson.
\newblock Wavefield extrapolation by nonstationary phase shift.
\newblock {\em Geophysics}, 64(4):1067--1078, 1999.

\bibitem{Martinsson:09}
P.-G. Martinsson.
\newblock A fast direct solver for a class of elliptic partial differential
  equations.
\newblock {\em Journal of Scientific Computing}, 38:316--330, 2009.

\bibitem{MartinssonRokhlin:07}
P.-G. Martinsson and V.~Rokhlin.
\newblock A fast direct solver for scattering problems involving elongated
  structures.
\newblock {\em J. Comput. Phys.}, 221(1):288--302, 2007.

\bibitem{OseiKuffuorSaad:09}
D.~Osei-Kuffuor and Y.~Saad.
\newblock {Preconditioning Helmholtz linear systems}.
\newblock Technical Report, umsi-2009-30, Minnesota Supercomputer Institute,
  University of Minnesota, 2009.

\bibitem{Rokhlin:90}
V.~Rokhlin.
\newblock Rapid solution of integral equations of scattering theory in two
  dimensions.
\newblock {\em J. Comput. Phys.}, 86(2):414--439, 1990.

\bibitem{Rokhlin:93}
V.~Rokhlin.
\newblock Diagonal forms of translation operators for the {H}elmholtz equation
  in three dimensions.
\newblock {\em Appl. Comput. Harmon. Anal.}, 1(1):82--93, 1993.

\bibitem{Saad:03}
Y.~Saad.
\newblock {\em Iterative methods for sparse linear systems}.
\newblock Society for Industrial and Applied Mathematics, Philadelphia, PA,
  second edition, 2003.

\bibitem{SaadSchultz:86}
Y.~Saad and M.~H. Schultz.
\newblock G{MRES}: a generalized minimal residual algorithm for solving
  nonsymmetric linear systems.
\newblock {\em SIAM J. Sci. Statist. Comput.}, 7(3):856--869, 1986.

\bibitem{SusanResigaAtassi:98}
R.~F. Susan-Resiga and H.~M. Atassi.
\newblock A domain decomposition method for the exterior {H}elmholtz problem.
\newblock {\em J. Comput. Phys.}, 147(2):388--401, 1998.

\bibitem{VanekMandelBrezina:98}
P.~Van{\v{e}}k, J.~Mandel, and M.~Brezina.
\newblock Two-level algebraic multigrid for the {H}elmholtz problem.
\newblock In {\em Domain decomposition methods, 10 ({B}oulder, {CO}, 1997)},
  volume 218 of {\em Contemp. Math.}, pages 349--356. Amer. Math. Soc.,
  Providence, RI, 1998.

\bibitem{XiaChandrasekaranGuLi:09}
J.~Xia, S.~Chandrasekaran, M.~Gu, and X.~S. Li.
\newblock Superfast multifrontal method for large structured linear systems of
  equations.
\newblock {\em SIAM J. Matrix Anal. Appl.}, 31(3):1382--1411, 2009.

\end{thebibliography}

\end{document}